\documentclass[12pt]{amsart}

\usepackage[margin=1in,marginparwidth=0.8in, marginparsep=0.1in]{geometry}
\pdfoutput=1


\usepackage{etoolbox}

\makeatletter
\let\old@tocline\@tocline
\let\section@tocline\@tocline
\newcommand{\subsection@dotsep}{4.5}
\newcommand{\subsubsection@dotsep}{4.5}
\patchcmd{\@tocline}
  {\hfil}
  {\nobreak
     \leaders\hbox{$\m@th
        \mkern \subsection@dotsep mu\hbox{.}\mkern \subsection@dotsep mu$}\hfill
     \nobreak}{}{}
\let\subsection@tocline\@tocline
\let\@tocline\old@tocline

\patchcmd{\@tocline}
  {\hfil}
  {\nobreak
     \leaders\hbox{$\m@th
        \mkern \subsubsection@dotsep mu\hbox{.}\mkern \subsubsection@dotsep mu$}\hfill
     \nobreak}{}{}
\let\subsubsection@tocline\@tocline
\let\@tocline\old@tocline

\let\old@l@subsection\l@subsection
\let\old@l@subsubsection\l@subsubsection

\def\@tocwriteb#1#2#3{%
  \begingroup
    \@xp\def\csname #2@tocline\endcsname##1##2##3##4##5##6{%
      \ifnum##1>\c@tocdepth
      \else \sbox\z@{##5\let\indentlabel\@tochangmeasure##6}\fi}%
    \csname l@#2\endcsname{#1{\csname#2name\endcsname}{\@secnumber}{}}%
  \endgroup
  \addcontentsline{toc}{#2}%
    {\protect#1{\csname#2name\endcsname}{\@secnumber}{#3}}}%

\newlength{\@tocsectionindent}
\newlength{\@tocsubsectionindent}
\newlength{\@tocsubsubsectionindent}
\newlength{\@tocsectionnumwidth}
\newlength{\@tocsubsectionnumwidth}
\newlength{\@tocsubsubsectionnumwidth}
\newcommand{\settocsectionnumwidth}[1]{\setlength{\@tocsectionnumwidth}{#1}}
\newcommand{\settocsubsectionnumwidth}[1]{\setlength{\@tocsubsectionnumwidth}{#1}}
\newcommand{\settocsubsubsectionnumwidth}[1]{\setlength{\@tocsubsubsectionnumwidth}{#1}}
\newcommand{\settocsectionindent}[1]{\setlength{\@tocsectionindent}{#1}}
\newcommand{\settocsubsectionindent}[1]{\setlength{\@tocsubsectionindent}{#1}}
\newcommand{\settocsubsubsectionindent}[1]{\setlength{\@tocsubsubsectionindent}{#1}}

\renewcommand{\l@section}{\section@tocline{1}{\@tocsectionvskip}{\@tocsectionindent}{}{\@tocsectionformat}}%
\renewcommand{\l@subsection}{\subsection@tocline{2}{\@tocsubsectionvskip}{\@tocsubsectionindent}{}{\@tocsubsectionformat}}%
\renewcommand{\l@subsubsection}{\subsubsection@tocline{3}{\@tocsubsubsectionvskip}{\@tocsubsubsectionindent}{}{\@tocsubsubsectionformat}}%
\newcommand{\@tocsectionformat}{}
\newcommand{\@tocsubsectionformat}{}
\newcommand{\@tocsubsubsectionformat}{}
\expandafter\def\csname toc@1format\endcsname{\@tocsectionformat}
\expandafter\def\csname toc@2format\endcsname{\@tocsubsectionformat}
\expandafter\def\csname toc@3format\endcsname{\@tocsubsubsectionformat}
\newcommand{\settocsectionformat}[1]{\renewcommand{\@tocsectionformat}{#1}}
\newcommand{\settocsubsectionformat}[1]{\renewcommand{\@tocsubsectionformat}{#1}}
\newcommand{\settocsubsubsectionformat}[1]{\renewcommand{\@tocsubsubsectionformat}{#1}}
\newlength{\@tocsectionvskip}
\newcommand{\settocsectionvskip}[1]{\setlength{\@tocsectionvskip}{#1}}
\newlength{\@tocsubsectionvskip}
\newcommand{\settocsubsectionvskip}[1]{\setlength{\@tocsubsectionvskip}{#1}}
\newlength{\@tocsubsubsectionvskip}
\newcommand{\settocsubsubsectionvskip}[1]{\setlength{\@tocsubsubsectionvskip}{#1}}

\patchcmd{\tocsection}{\indentlabel}{\makebox[\@tocsectionnumwidth][l]}{}{}
\patchcmd{\tocsubsection}{\indentlabel}{\makebox[\@tocsubsectionnumwidth][l]}{}{}
\patchcmd{\tocsubsubsection}{\indentlabel}{\makebox[\@tocsubsubsectionnumwidth][l]}{}{}

\newcommand{\@sectypepnumformat}{}
\renewcommand{\contentsline}[1]{%
  \expandafter\let\expandafter\@sectypepnumformat\csname @toc#1pnumformat\endcsname%
  \csname l@#1\endcsname}
\newcommand{\@tocsectionpnumformat}{}
\newcommand{\@tocsubsectionpnumformat}{}
\newcommand{\@tocsubsubsectionpnumformat}{}
\newcommand{\setsectionpnumformat}[1]{\renewcommand{\@tocsectionpnumformat}{#1}}
\newcommand{\setsubsectionpnumformat}[1]{\renewcommand{\@tocsubsectionpnumformat}{#1}}
\newcommand{\setsubsubsectionpnumformat}[1]{\renewcommand{\@tocsubsubsectionpnumformat}{#1}}
\renewcommand{\@tocpagenum}[1]{%
  \hfill {\mdseries\@sectypepnumformat #1}}

\let\oldappendix\appendix
\renewcommand{\appendix}{%
  \leavevmode\oldappendix%
  \addtocontents{toc}{%
    \protect\settowidth{\protect\@tocsectionnumwidth}{\protect\@tocsectionformat\sectionname\space}%
    \protect\addtolength{\protect\@tocsectionnumwidth}{2em}}%
}
\makeatother



\makeatletter
\settocsectionnumwidth{2em}
\settocsubsectionnumwidth{2.5em}
\settocsubsubsectionnumwidth{3em}
\settocsectionindent{1pc}%
\settocsubsectionindent{\dimexpr\@tocsectionindent+\@tocsectionnumwidth}%
\settocsubsubsectionindent{\dimexpr\@tocsubsectionindent+\@tocsubsectionnumwidth}%
\makeatother

\settocsectionvskip{5pt}
\settocsubsectionvskip{0pt}
\settocsubsubsectionvskip{0pt}
    


\settocsectionformat{\bfseries}
\settocsubsectionformat{\mdseries}
\settocsubsubsectionformat{\mdseries}
\setsectionpnumformat{\bfseries}
\setsubsectionpnumformat{\mdseries}
\setsubsubsectionpnumformat{\mdseries}


\let\oldtableofcontents\tableofcontents
\renewcommand{\tableofcontents}{%
  \vspace*{-\linespacing}
  \oldtableofcontents}

\setcounter{tocdepth}{2}


\usepackage{soul}

\usepackage{amsfonts,amssymb,latexsym,amsmath,graphicx}
\input xy
\xyoption{all}

\usepackage[backref=page, bookmarks=true, bookmarksopen=true,%
bookmarksdepth=3,bookmarksopenlevel=2,%
colorlinks=true,%
linkcolor=blue,%
citecolor=blue,%
filecolor=blue,%
menucolor=blue,%
urlcolor=blue]{hyperref}

\usepackage{mathtools}
\usepackage{cleveref}

\usepackage[all, knot, poly]{xy}
\xyoption{knot}

\usepackage{times}
\usepackage{mathrsfs}

\usepackage{tikz-cd}

\newtheorem{lemma}{Lemma}[section]
\newtheorem{proposition}[lemma]{Proposition}
\newtheorem{corollary}[lemma]{Corollary}
\newtheorem{theorem}[lemma]{Theorem}

\theoremstyle{definition}
\newtheorem{definition}[lemma]{Definition}
\newtheorem{definition-proposition}[lemma]{Definition-Proposition}

\theoremstyle{remark} 
\newtheorem{remark}[lemma]{Remark}
\newtheorem{example}[lemma]{Example}



\newcommand{\C}{\mathbb{C}}

\renewcommand{\H}{\mathbb{H}}

\newcommand{\N}{\mathbb{N}}

\renewcommand{\P}{\mathbb{P}}

\newcommand{\R}{\mathbb{R}}
\renewcommand{\S}{\mathbb{S}}

\newcommand{\Z}{\mathbb{Z}}

\newcommand{\cC}{\mathcal{C}}
\newcommand{\cD}{\mathcal{D}}

\newcommand{\cF}{\mathcal{F}}
\newcommand{\cG}{\mathcal{G}}
\newcommand{\cH}{\mathcal{H}}
\newcommand{\cI}{\mathcal{I}}

\newcommand{\cK}{\mathcal{K}}

\newcommand{\cO}{\mathcal{O}}

\newcommand{\cS}{\mathcal{S}}
\newcommand{\cT}{\mathcal{T}}
\newcommand{\cU}{\mathcal{U}}
\newcommand{\cV}{\mathcal{V}}

\newcommand{\pmu}{\mathbb{P} \mu}
\newcommand{\msh}{\mu sh}

\newcommand{\Pmsh}{\operatorname{\mathbb{P} \mu sh}}
\renewcommand{\c}{\mathfrak{c}}

\newcommand{\Coh}{\operatorname{Coh}}

\newcommand{\Map}{\operatorname{Map}}
\newcommand{\pre}{{pre}}
\newcommand{\Pic}{\operatorname{Pic}}
\newcommand{\Sq}{\operatorname{Sq}}
\newcommand{\supp}{\operatorname{supp}}


\DeclareMathOperator{\Hom}{Hom}

\DeclareMathOperator{\re}{re}
\DeclareMathOperator{\im}{im}

\makeatletter
\let\@wraptoccontribs\wraptoccontribs
\makeatother

\author{Laurent C\^{o}t\'{e}}
\author{Christopher Kuo}
\author{David Nadler}
\author{Vivek Shende}
\contrib[\MakeLowercase{with an appendix by}]{Sanath Devalapurkar}

\title{Perverse microsheaves}

\begin{document}

\begin{abstract}
On a complex contact manifold, or complex symplectic manifold with weight-1 circle action, we construct a sheaf of
stable categories carrying a $t$-structure which is locally equivalent to a microlocalization of 
the perverse $t$-structure. 
\end{abstract}

\maketitle

\tableofcontents

\newpage

\section{Introduction}

For a complex manifold $M$, let us write $sh_{\C-c}(M)$ for a derived category of sheaves on $M$, whose objects 
are each locally constant on the strata of a locally finite stratification by complex subvarieties.  
Perverse sheaves are those $F$ with the following property: 
\begin{equation} \label{perverse}
\dim_\C \{\, x \in M \, | \,  H^i( \iota_x^*  F) \ne 0 \} \le -i  \qquad \qquad \dim_\C \{\, x \in M \, | \,  H^i( \iota_x^!  F) \ne 0 \} \le i 
\end{equation}
 Here we denote by $\iota_x: \{x\} \hookrightarrow M$ the inclusion of the point $p$ to $M$ and by $\iota_x^*$ and $\iota_x^!$ the induced restriction functors.  

Perverse sheaves have played a pivotal role in many results in algebraic geometry and geometric representation theory.  
They turn out to be natural both in terms of
considerations of Frobenius eigenvalues in positive characteristic \cite{bbd},
and in terms of analytic considerations in characteristic
zero, where they are the sheaves of solutions to  regular holonomic differential equations,
or more generally, D-modules \cite{kashiwara1979faisceaux, kashiwara-kawai, kashiwara-RH, mebkhout-RH}.
This latter equivalence highlights a key
feature, not immediately apparent from the definition: 
despite being a seemingly arbitrarily demarcated subcategory of a 
category of complexes, perverse sheaves form an {\em abelian} category.  

There are microlocal versions (living on $T^* M$ or $\P T^* M$) of the category of $D$-modules \cite{sato-kashiwara-kawai, kashiwara-microlocal-D}, perverse sheaves \cite{andronikov-microperverse, waschkies-microperverse}, and the equivalence between them \cite{andronikof-microlocal-RH, neto-microlocal-RH, waschkies-microlocal-RH}.  More generally still, 
Kashiwara has constructed a sheaf of categories on any complex contact manifold, locally
equivalent to the microlocalization of $D$-modules \cite{kashiwara-quantization-contact}, see also \cite{polesello-schapira}. 
A variant of this construction appropriate to conic complex symplectic geometry has allowed the methods of geometric representation
theory to be extended beyond cotangent bundles to more general symplectic resolutions and similar spaces \cite{kashiwara-rouquier, BPLW-I}. 

\vspace{2mm}
The purpose of the present article is to 
construct perverse $t$-structures on 
categories of complex-constructible microsheaves, globalizing the construction of Waschkies \cite{waschkies-microperverse}.   In the sequel \cite{CKNS2} we establish a Riemann-Hilbert equivalence with the canonical stack of $\mathcal{E}$-modules defined by Kashiwara \cite{kashiwara-quantization-contact}.

\vspace{2mm}

Our starting point is the globalization \cite{shende-microlocal, nadler-shende} of the microlocal sheaf theory of Kashiwara and Schapira \cite{kashiwara-schapira}.  We recall the relevant notions in Section \ref{section:sheaves-on-manifolds}.  In brief, the theory takes as input 
a {\em real} contact or exact symplectic manifold $V$, 
a choice of symmetric monoidal stable compactly generated coefficient category $\mathcal{C}$, and a  trivialization of a certain canonical obstruction $V \to B^2 Pic(\mathcal{C})$.  We refer to said trivialization as a {\em Maslov datum}; is is also what is required to define Floer-theoretic invariants in the same target spaces.  The output of the theory is a 
sheaf of stable categories $\mu sh_V$ on $V$ \cite[Thm. 1.1]{nadler-shende}.  For a locally closed subset $X \subset V$, we write $\mu sh_X \subset \mu sh_V|_X$ for the subsheaf of full subcategories on objects locally supported in $X$.  

A Legendrian or conic Lagrangian $L\subset V$ determines an obstruction $L \to BPic(\mathcal{C})$, a choice of trivialization for which (a `secondary Maslov datum') yields an equivalence $$\mu sh_L \cong loc_L$$ with the sheaf of categories of local systems along $L$ \cite[Thm. 1.2]{nadler-shende}. In particular, if $D \subset V$ is a smooth Legendrian disk containing a point $p$, a choice of secondary Maslov datum for $D$ determines an equivalence
\begin{equation}\label{intro:non-canonical-microstalk}  (\mu sh_D)_p \xrightarrow{\sim} \cC.
\end{equation}
The space of secondary Maslov data for the disk $D$ is a torsor for $BPic(\mathcal{C})$, which acts on maps \eqref{intro:non-canonical-microstalk} in the evident way.  

In this paper, we will be interested in  contact and conic symplectic manifolds which come from complex geometry. It turns out that such manifolds admit a \emph{canonical} choice of Maslov datum, yielding a canonical notion of $\mu sh$. 
For simplicity, we state this in the symplectic setting. By an \emph{exact complex symplectic manifold}, we mean a complex manifold $W$ along with a holomorphic $1$-form $\lambda$ such that $d\lambda$ is holomorphic symplectic.  The underlying real manifold of $W$ carries the real exact symplectic structure $\operatorname{re}(\lambda)$, so that we can meaningfully discuss Maslov data and microlocal sheaves on $W$.

\begin{theorem}\label{theorem:intro-1-test}
    Let $W$ be an exact complex symplectic manifold and $\mathcal{C}$ the category of modules over a (discrete) commutative ring $R$.  Then there is a canonical Maslov datum for $W$, with respect to which secondary Maslov data for a conic complex Lagrangian $L\subset W$ are identified with $R$-spin structures on $L$.
\end{theorem}
By an $R$-spin structure on a real symplectic manifold $W$, we mean a null-homotopy of the composition $W \to BU \xrightarrow{w_2} B^2\mathbb{Z}^\times \to B^2R^\times$. When $R=\mathbb{Z}$, this is a spin structure in the ordinary sense; in general, if such structures exist, they form a torsor for $\mathrm{H}^1(X, R^\times)$.  The proof of \Cref{theorem:intro-1-test} is in \Cref{secondary maslov data}, where we also deduce, from the aforementioned general properties of microlocal sheaves: 
\begin{corollary}[The canonical microsheaf category]\label{corollary:test-microsheaf-intro}  
Let $W, \mathcal{C}$ be as in \Cref{theorem:intro-1-test}. There is a canonical sheaf of stable $\mathcal{C}$-linear categories $\mu sh_{W}$ on $W$.  
For a complex conic Lagrangian $L \subset W$, an $R$-spin structure $\sigma$ on $L$ determines an equivalence 
$\mu sh_L \cong loc_L$. 

\end{corollary}

 In particular, specializing to the neighborhood of a point gives: 
\begin{corollary}[The microstalk functor]\label{corollary:microstalk-intro-test}
    Let $W, \mathcal{C}$ be as in \Cref{theorem:intro-1-test}. If $X \subseteq W$ is a closed subset which locally around $p \in X$ is conic complex Lagrangian,  there is a functor
\begin{equation}\label{equation:microstalk-test} \omega_p^{-1}: (\mu sh_X)_p \xrightarrow{\sim} \mathcal{C},
\end{equation}
which is well-defined up to non-canonical invertible natural transformation.  In particular, given $K \in \mu sh_X(W)$, the isomorphism class of $\omega_p^{-1}(K)$ is well defined. 
\end{corollary}

We refer to $\omega_p^{-1}(K)$ as the \emph{microstalk} of $K$ at $p$.  In \Cref{subsection:canonicalmicrostalkcomplex}, we will give a more explicit characterization of this microstalk.

\begin{definition}\label{definition:putative-t-intro-test}
    Write $\mu sh_{W, \mathbb{C}-c} \subset \mu sh_{W}$ for the sheaf of full subcategories on objects whose (micro)support is a complex analytic Lagrangian subset of $W$.  We define 
\begin{equation} \label{microperverse}
(\mu sh_{W, \C-c})^{\le 0} \subset \mu sh_{W, \C-c}  \qquad \qquad (\mu sh_{W, \C-c})^{\geq 0} \subset \mu sh_{W, \C-c}
\end{equation}    
as the sheaves of full subcategories on those objects
all of whose microstalks, as elements of $R-mod$, have cohomology concentrated in degrees $\le 0$, resp.\ $\geq 0$.
\end{definition}
We do not know in general whether $((\mu sh_{W, \C-c})^{\le 0}, (\mu sh_{W, \C-c})^{\ge 0})$ is a $t$-structure on $\mu sh_{W, \C-c}$.  However, we will use these subcategories to construct $t$-structures on certain related categories.

Recall that for a complex contact manifold $V$, its {\em symplectization} is the $\C^*$ bundle $\pi: \widetilde V \to V$ 
whose (local) holomorphic sections give (local) complex contact forms; it carries canonically an exact complex symplectic form 
(as we review in more detail in  \Cref{subsection:complex-contact-symplectic}) and hence we have the canonically defined $\mu sh_{\widetilde V}$. 

\begin{theorem}[\ref{theorem:main-comparison}]\label{theorem:main}
    Then the pair 
    $\left( (\pi_*\mu sh_{\tilde{V}, \mathbb{C}-c})^{\leq 0}, (\pi_*\mu sh_{\tilde{V}, \mathbb{C}-c})^{\geq 0} \right)$ 
    determines a $t$-structure on $\pi_*\mu sh_{\widetilde{V}, \mathbb{C}-c}$ 
(i.e., the sections of the above sheaves over any open set $U$ give a t-structure on $\pi_*\mu sh_{\tilde{V}, \mathbb{C}-c}(U)$).
\end{theorem}

For an exact complex symplectic manifold $(W, \lambda)$, 
we may use the contactization $(W \times \C, \lambda + dz)$ to define a sheaf of categories $\pi_* \mu sh_{\pi^{-1}(W \times \{0\})}$, i.e.\ the sheaf of full subcategories of $\pi_*\mu sh_{\widetilde{W \times \mathbb{C}}}$ on objects whose support is contained in $\pi^{-1}(W \times \{0\})$.  We do not understand the relationship of this with $\mu sh_W$ in general. 
However, if the Liouville flow on $(W, \lambda)$ integrates to a weight-1 $\C^*$-action, then $\mathbb{C}^*$ naturally acts on $W \times \mathbb{C}$ by contactomorphism.    Let $\gamma_\C$ be the set-theoretic identity on $W \times \mathbb{C}$ (resp.\ on $W$) where the source carries the Euclidean topology but the target is endowed with the $\mathbb{C}^*$ invariant topology. 

\begin{theorem}\label{theorem:main symplectic intro}
Let $W$ be a complex exact symplectic manifold whose Liouville vector field integrates to a weight-1 $\C^*$ action. Then the pair $((\gamma_\mathbb{C})_* \mu sh_{W, \C-c})^{\geq 0}, ((\gamma_\mathbb{C})_* \mu sh_{W, \C-c})^{\leq 0}$
 determines a $t$-structure on $(\gamma_\mathbb{C})_* \mu sh_{W, \C-c}$. 
    Moreover, the Hom sheaf of two objects in the heart is a ($\frac{1}{2} \dim W$-shifted)
perverse sheaf. 
\end{theorem} 

Finally, we recall that there is a comparison theorem \cite{GPS3} between microsheaves and Fukaya categories; consequently, our results can be translated to give $t$-structures on certain Fukaya categories.  We spell this out in Appendix \ref{app: Fukaya}.

\vspace{2mm} {\bf Acknowledgements.}
We wish to thank 
Roman Bezrukavnikov, Sanath Devalapurkar, Peter Haine, Benjamin Gammage, Sam Gunningham, Justin Hilburn, Wenyuan Li, Michael McBreen, Semon Rezchikov, 
Marco Robalo, Pierre Schapira and Filip {\v{Z}}ivanovi{\'c} for enlightening conversations. We are particularly grateful to Sanath Devalapurkar for help with \Cref{g-od} and to Wenyuan Li for help with \Cref{prop: C-c-Pmsh-equal-all}.

LC was partially supported by Simons Foundation grant 385573 (Simons Collaboration on Homological Mirror Symmetry).
DN was partially supported by NSF grant DMS-2101466. 
CK and VS were partially supported by: VILLUM FONDEN grant VILLUM Investigator 37814, 
and VS was supported in addition by Novo Nordisk Foundation 
grant NNF20OC0066298, Danish National Research Foundation grant DNRF157, and the USA NSF 
grant CAREER DMS-1654545. CK was supported by Max Planck Institude for Mathematics in Bonn during the revision of the paper. 
 
 \section{Complex contact and symplectic manifolds}
\label{subsection:complex-contact-symplectic}
We review some standard properties of complex contact and symplectic manifolds. A classical reference in the contact setting is \cite{kobayashi}. 

If $X$ is a complex manifold, its holomorphic tangent bundle is denoted by $\cT_X$. The holomorphic cotangent bundle is denoted by $\Omega_X$, and its exterior powers are denoted by $\Omega^k_X:= \bigwedge^k \Omega_X$.

\begin{definition}
A \emph{complex (or holomorphic) symplectic manifold} is a complex manifold $W$ along with a closed holomorphic $2$-form $\omega \in H^0(W,\Omega^2_W)$. 
\end{definition}
A complex symplectic manifold $(W, \omega)$ determines a family, parameterized by  $\hbar \in \C^*$,
of real symplectic manifolds $(W, \re(\hbar \omega))$. 
By an exact complex symplectic manifold, we mean a pair $(W, \lambda)$  where $W$ is a complex manifold and
$\lambda$ is a holomorphic 1-form such that $d \lambda = \partial \lambda$ is symplectic. 

\begin{example}\label{example:holomorphic-cotangent}
Let $M$ be a complex manifold. Then the holomorphic cotangent bundle $\Omega_M$ carries a canonical holomorphic $1$-form $\lambda_{can, \C}= ydx$, where $(x_1,\dots,x_n, y_1,\dots,y_n)$ are canonical holomorphic coordinates. Then $(\Omega_M, d\lambda_{can, \C})$ is a complex symplectic manifold.

Meanwhile, the real cotangent bundle $T^*M$ carries the canonical $1$-form $\lambda_{can}$. There is a natural identification $\Omega_M \to T^*M$ defined in local holomorphic coordinates by $$(x,y) \mapsto (\re(x), \im(x), \re(y), -\im(y)).$$ One computes that this identification pulls back $\lambda_{can}$ to $\re(\lambda_{can, \C})$. 
\end{example}

If $(X, \omega)$ is a complex symplectic manifold, a half-dimensional complex submanifold $L \subset X$ is said to be \emph{complex Lagrangian} if $\omega|_L=0$. Clearly complex Lagrangian submanifolds are automatically (real) Lagrangian with respect to $\re(\hbar \omega)$, for all $\hbar \in \C^*$. 

\begin{definition}
A \emph{complex (or holomorphic) contact manifold} is a complex manifold $V$ of complex dimension $2n+1$ along with a holomorphic hyperplane field $\cH \hookrightarrow \cT_V$ which is maximally non-integrable. Concretely, this means that if $\alpha \in \Omega_V(U)$ is a holomorphic $1$-form for which $\cH = \ker \alpha$ in some local chart $U \subset V$, then $\alpha \wedge (d\alpha)^n \neq 0$. 
\end{definition}

Given a complex contact manifold $(V, \xi)$, there is a holomorphic line bundle $\cT_V/ \cH \to V$.  (Local) complex contact forms are nonvanishing (local) nonvanishing holomorphic sections of $(\cT_V/ \cH)^{\vee}$.  Correspondingly, a global complex contact form is a global holomorphic trivialization of this line bundle (which does not typically exist). 

The bundle $(\cT_V/ \cH)^{\vee}$ is naturally a holomorphic sub-bundle of 
$\Omega_V$ (indeed, for $v \in V$, we have $(\cT_V/ \cH)^{\vee}_v =  \{\alpha \in \Omega_{V, v} \mid \alpha(\xi)=0\} \subset \Omega_{V, v}$). 
We consider the $\mathbb{C}^*$-bundle $$\pi: \widetilde{V}:= (\cT_V/ \cH)^{\vee} \setminus 0_V \to V$$ which we call the \emph{complex symplectization of $V$} and let $\lambda_{\widetilde{V}}$ denote the pullback of $\lambda_{can, \C}$ under the inclusion $\widetilde{V} \hookrightarrow \Omega_V$.

We also consider the projectivized $S^1$-bundle $p:\widetilde{V}/\mathbb{R}_+ \to V$ and set $\xi_\hbar:= \ker( \re(\hbar \lambda_{\widetilde{V}}))$ for $\hbar \in \C^*$. The relation between these spaces is summarized by the following diagram:

\begin{equation}\label{equation:circle-bundle-diagram}
\begin{tikzcd}
\widetilde{V} \ar[r, "q" '] \ar[rr, bend left=20, "\pi"] & \widetilde{V}/\R_+ \ar[r, "p" '] & V.
\end{tikzcd}
\end{equation}

\begin{lemma}\label{lemma:organizing-symplectizations}
With the above notation:
\begin{enumerate}
\item[(i)] $(\widetilde{V}, \hbar \lambda_{\widetilde{V}})$ is an exact complex symplectic manifold
\item[(ii)] $(\widetilde{V}/\mathbb{R}_+,\xi_\hbar)$ is a real contact manifold
\item[(iii)] $(\widetilde{V}, \re(\hbar \lambda_{\widetilde{V}}) )$ is canonically isomorphic to the real symplectization of $(\widetilde{V}/\mathbb{R}_+,\xi_\hbar)$. (This isomorphism intertwines the $1$-forms and the $\mathbb{R}_+$-bundle structure over $\widetilde{V}/\mathbb{R}_+$).
\end{enumerate}
\end{lemma}

\begin{proof}
We compute $\hbar \lambda_{\widetilde{V}}$ locally by choosing a holomorphic contact $1$-form $\alpha$ defined on some open set $U \subset V$. Such a choice induces a holomorphic embedding 
\begin{align*}\label{align:local-embedding}
\iota_\alpha: \C^* \times U &\hookrightarrow \widetilde{V} \subset \Omega_V \\
(z, x) &\mapsto z \alpha_x
\end{align*}

To compute the pullback of $\hbar \lambda_{\widetilde{V}}$ under $\iota_\alpha$, choose $(z, x) \in \C^* \times U$ and $d \iota_\alpha (v) \in T \widetilde{V}_{z \alpha_x} \subset T \Omega_V$. Then we have $
\hbar \lambda_{\widetilde{V}}(d \iota_\alpha (v))=\hbar \lambda_{can, \C}(d \iota_\alpha (v))= z \hbar \alpha_x(d \pi \circ  d \iota_\alpha(v))$. Hence 
\begin{equation} \label{equation:pullback-lambda}
 \iota_{\alpha}^*  \hbar \lambda_{\widetilde{V}} = z \hbar \alpha.
 \end{equation}

Both (i) and (ii) can be checked immediately using \eqref{equation:pullback-lambda}. The proof of (iii) is an exercise in chasing definitions. 
\end{proof}

\begin{example}[\Cref{example:holomorphic-cotangent} continued]
The complex projectivization $V:=(\Omega_X- 0_X)/ \C^*$ is a complex contact manifold with respect to $\lambda_{can, \C}$ and we have $\widetilde{V}=(\Omega_X- 0_X)$. The associated real projectivization $\widetilde{V}/\R_+$ carries a circle of real contact forms $\re(\hbar \lambda_{can, \C})$. 
\end{example}

\begin{example}\label{example:complex-symplectic}
Given an exact complex symplectic manifold $(X, \lambda)$, then $(X \times \C, \lambda + dz)$ is a complex contact manifold.  The contact form $\lambda + dz$ defines a section of the $\C^*$-bundle $\widetilde{X \times \C}= X \times \C \times \C^* \to X \times \C$. Similarly, $\re(\hbar(\lambda + dz))$ defines a section of $\widetilde{X \times \C}/ \mathbb{R}^+= X \times \C \times S^1$. 
\end{example}

Observe that there is a fiber-preserving $\mathbb{C}^*$-action on $\widetilde{V}$: over some fiber $\widetilde{V}_v \subset \Omega_{V, v}$, it sends $\alpha \mapsto z \alpha$ for $z \in \mathbb{C}^*$ and $v \in V$. For concreteness, we write $z=e^{t+ i \theta}$ for $(t, \theta) \in  \mathbb{R}_+\times S^1$ and let $\partial_t, \partial_\theta$ denote the vector fields on $\widetilde{V}$ generated by the $\mathbb{R}_+$ and $S^1$ actions.

We have the following morphisms of bundles over $\widetilde{V}$:
\begin{equation}\label{equation:inclusions-bundles}
 \pi^* \xi \equiv \operatorname{ker}(\lambda_{\widetilde{V}}) / \langle \partial_t, \partial_\theta \rangle \twoheadleftarrow \operatorname{ker}(\lambda_{\widetilde{V}})   \hookrightarrow  \operatorname{ker}(\operatorname{re}(\hbar \lambda_{\widetilde{V}})) \hookrightarrow T \widetilde{V}   
\end{equation}

\begin{lemma}\label{lemma:splitting-symplectization}
There is a splitting of vector bundles over $\widetilde{V}$ (well-defined up to contractible choice) $$\operatorname{ker}(\operatorname{re}(\hbar \lambda_{\widetilde{V}})) = \partial_t \oplus \langle \partial_\theta, X \rangle \oplus \pi^* \xi,$$ where $X$ is a non-vanishing section of $\operatorname{ker}(\re(\hbar \lambda_{\widetilde{V}})) / \operatorname{ker}(\hbar \lambda_{\widetilde{V}})$ and 
\begin{itemize}
\item[(i)] $\langle \partial_\theta, X \rangle$ is a trivial $2$-dimensional real symplectic vector bundle with respect to $d(\re (\hbar \lambda_{\widetilde{V}}))$
\item[(ii)] $\pi^* \xi$ is a complex symplectic vector bundle with respect to $d(\lambda_{\widetilde{V}})$
\end{itemize}
\end{lemma}

\begin{proof}
The existence of this splitting is essentially a restatement of \eqref{equation:inclusions-bundles}. The other statements can be checked locally using \eqref{equation:pullback-lambda}. 
\end{proof}

Given a complex contact manifold $(V, \cH)$ of complex dimension $2n+1$, a complex submanifold $L \subset V$ of complex dimension $n$ which is everywhere tangent to $\cH$ is said to be a \emph{complex Legendrian}. 
\begin{lemma}
If $L \subset (V, \xi)$ is complex Legendrian, then its preimage under $\widetilde{V} \to V$ is denoted by $\widetilde{L}$ and is complex exact Lagrangian.
The quotient $\widetilde{L}/ \R_+ \subset \widetilde{V}/ \R_+$ is a real Legendrian with respect to any of the $\C^*$ of contact forms $\re(\hbar \lambda_{can, \C})$. 

\qed
\end{lemma}

\section{Grading and orientation data} \label{g-od}


Consider the following inclusions of groups ($n/2$ ones defined only when $n$ even): 
$$
\xymatrix{
& U(n/2, \H) \ar[rd] & \\
U(n/2) \ar[ru] \ar[rd] & & \sqrt{SU}(n)  \ar[r] & U(n) \\
&  O(n, \R) \ar[ru] & 
}
$$
Here $\sqrt{SU}(n)$ is defined as the kernel of $U(n) \xrightarrow{det^2} U(1)$. Recall that $U(n)$ is the maximal compact subgroup of both $Sp(2n, \mathbb{R})$ and $GL(n, \C)$, which retract to it; correspondingly, $BU(n)$ is canonically homotopy equivalent to the spaces classifying either complex or symplectic vector bundles.  Meanwhile $U(n/2, \mathbb{H})$ is also known as the `compact symplectic group' $Sp(n/2)= Sp(n, \mathbb{C}) \cap U(n)$,  the maximal compact subgroup of $Sp(n, \mathbb{C})$.

\begin{definition} \label{standard structures} 
Consider a topological space $X$ carrying a hermitian bundle classified by a map $X \to BU(n)$.  
We give names to the following sorts of structures: 

\begin{itemize}
\item A {\em grading} is a lift to $B(\sqrt{SU}(n))$
\item A {\em polarization} is a lift to $BO(n, \R)$
\item A {\em quaternionic structure} is a lift to $BU(n/2, \H)$
\item A {\em complex polarization} is a lift to $BU(n/2)$. 
\end{itemize} 
\end{definition}

When $X$ is a symplectic manifold or contact manifold, by a polarization (etc.) on $X$, we always mean a polarization (etc.) on the symplectic tangent bundle of $X$ or the contact distribution.  

Concretely, a polarization of a symplectic vector bundle can be viewed as Lagrangian plane field, as follows by inspecting the fiber sequence $O(n) \to U(n) \to U(n)/O(n)= LGr(n)$. Similarly, a complex polarization of a complex symplectic vector bundle can be viewed as a complex Lagrangian plane field.  We freely pass between both viewpoints in this paper.

\begin{lemma} \label{fiber versus canonical grading} A polarization induces a grading.  A quaternionic structure induces a grading.  A complex polarization 
induces both a quaternionic structure and a polarization, each of which induces the same grading. 
\qed
\end{lemma}

We will refer to gradings induced by any of the above structures as {\em canonical} gradings.

Let us recall the relationship between null-homotopies and lifts. 
Recall that a fiber sequence $P \to Q \to R$  is a 
Cartesian diagram $P \xrightarrow{\sim} Q \times_R \bullet$, where $\bullet$ is a point.  
Mapping spaces preserve limits, 
so $Map(X, P) \xrightarrow{\sim} Map(X, Q) \times_{Map(X, R)} Map(X, \bullet)$.  That is, 
given such a fiber sequence of pointed spaces and a map $X \to Q$, a null-homotopy of the composite 
map $X \to Q \to R$ is equivalent to a lift  $X \to P$.  

\begin{example} 
A grading is a lift from $X \to BU(n)$ to $X \to B \sqrt{SU}(n)$, so equivalently, a null-homotopy of the composition $X \to BU(n) \xrightarrow{B det^2} BU(1)$.  Recalling that homotopy class of $B det: BU(n) \to BU(1) = B^2 \Z$ in $[BU(n), B^2 \Z] = \mathrm{H}^2(BU(n), \Z)$ is the universal first Chern class, we see that the obstruction to the existence of a grading is twice the first Chern class. 
\end{example}

%
%
%
%
%

We consider the classical stablized compact groups 
$$Sp = \lim_{n \to \infty} U(n, \H) \qquad \qquad U = \lim_{n \to \infty} U(n, \C) \qquad \qquad O = \lim_{n \to \infty} O(n, \R).$$
We write similarly $SU, \sqrt{SU}$.  Note the inclusion $U(n, \mathbb{C}) \to O(2n, \mathbb{R})$ lands in $SO(2n, \mathbb{R})$;  correspondingly we have inclusions $U \subset  SO \subset O$. 
We also have  the Lagrangian Grassmannians
$$LGr_\C = Sp / U \qquad \qquad LGr = U / O$$
The natural inclusion $LGr_\C \to LGr$ is the limit of $U(n, \H) / U(n, \C) \hookrightarrow U(2n, \C) / O(2n, \R)$.  

There are evident stable analogues of the notions of \Cref{standard structures}, and the stable analogue of
\Cref{fiber versus canonical grading} also holds.

\begin{lemma}
Let $(V, \xi)$ be a complex contact manifold and consider the real contact manifold $(\widetilde{V}/\mathbb{R}_+, \xi_{\hbar})$. Then the contact distribution $\xi_{\hbar} \to \widetilde{V}/\mathbb{R}_+$ carries a stable quaternionic structure. 
\end{lemma}
\begin{proof}
It is equivalent to prove that $q^* \xi_{\hbar}$ carries a stable quaternionic structure, where $q: \widetilde{V} \to \widetilde{V}/\mathbb{R}_+$ is the quotient map. This follows from \Cref{lemma:splitting-symplectization}. 
\end{proof}



One virtue of having passed to stabilizations is that $U/O$ is an infinite loop space. 
The map $det^2$ descends to an equivalence 
$det^2: \tau_{\le 1}(U/O) \xrightarrow{\sim} B \Z$.  Thus a grading on $X \to BU(n)$ is equivalently a null-homotopy of the composition $X \to BU(n) \to BU \to B(U/O) \to \tau_{\le 2}B(U/O)$.  
\begin{definition} \label{def: grading/orientation}
    Grading/orientation data for $X \to BU(n)$ is a null-homotopy of the composition 
    $X \to BU(n) \to \tau_{\le 3}B(U/O)$.
\end{definition}
Note that a stable polarization for $X \to BU(n)$ is a null-homotopy of the composition $X \to BU \to B(U/O)$, so canonically provides grading/orientation data.  We refer to such grading/orientation data as {\em polarization grading/orientation data}.

Truncations give the fiber sequence (of infinite loop spaces)
$$ B^3  (\Z/2\Z) \to \tau_{\le 3} B(U/O) \to  \tau_{\le 2}B(U/O) = B \left(\tau_{\leq 1}(U/O) \right)   \stackrel{Bdet^2}{=} B^2 \Z.$$
We refer to the space of grading/orientation data lifting a given grading as orientation data. 

\begin{remark}
    It is presumably implicit in the construction of orientations on moduli spaces for Floer theory that there is a universal way to choose orientation data on all (real) symplectic manifolds.  In fact there are two such ways; and their existence is somewhat subtle from our present viewpoint, as pointed out to us by Sanath Devalapurkar.  The main source of the subtlety is the fact that the fiber sequence $B(\Z/2 \Z) \to \pi_{\le 1}(U/O) \to \Z$ splits as a sequence of topological spaces, but not as a sequence of infinite loop spaces; in fact,  $B^3(\Z/2\Z) \to B^2  \pi_{\le 1}(U/O) \to B^2(\Z)$ does not split as a sequence of spaces.    
    We record his arguments in Appendix \ref{sanath}.  This fact is not logically required for the present article, because the existence of orientation data for complex symplectic manifolds is less subtle, as will be shown presently. 
\end{remark}

\begin{lemma}\label{lemma:truncate-complex}
The composition $Sp \to U \to U/O \to \tau_{\le 2}(U/O)$ factors canonically through $\tau_{\le 2}(Sp) = 0$.    
\end{lemma}

\begin{definition} \label{def: quarternionic-grading/orientation}
    Given a stable quaternionic bundle $X \to BSp$, the
    {\em canonical grading/orientation datum} of the induced complex bundle $X \to BSp \to BU$ is the null-homotopy induced from the null-homotopy of $Sp \to \tau_{\le 2}(U/O)$ above. Similarly, the {\em canonical grading} is the null-homotopy induced from the similar null-homotopy of $Sp \rightarrow \tau_{\leq 1}(U/O)$. 
\end{definition}
We note that the grading induced by the canonical grading/orientation datum agrees with the canonical grading of \Cref{fiber versus canonical grading}.

Because $Sp \subset \sqrt{SU}$,  the map $Sp/U \to U/O \xrightarrow{det^2} U(1) = \tau_{\le 1}(U/O)$ is canonically null-homotopic.  So the map $Sp/U \to U/O \to \tau_{\le 2}(U/O)$ lifts to $Sp/U \to B^2(\Z/2\Z)$.
\begin{lemma}
    This lift is the composition 
    $Sp/U \to BU \xrightarrow{c_1} B^2 \Z \to B^2(\Z/2\Z)$, where $c_1$ is the first Chern class.
\end{lemma}
\begin{proof}
Since $\tau_{\le 2} Sp = 0$, any map $Sp/U \to B^2(\Z/2\Z)$ must factor through $BU$.   There's a unique nontrivial map in $[BU, B^2(\Z/2\Z)]$, and it's the reduction mod 2 of the first Chern class.  Finally, it is standard that $\tau_{\le 2}(Sp/U \to U/O)$ is nontrivial: one has the map between fiber sequences
$$
\begin{tikzpicture}
\node at (0,2) {$U$};
\node at (4,2) {$Sp$};
\node at (8,2) {$Sp/U$};
\node at (0,0) {$O$};
\node at (4,0) {$U$};
\node at (8,0) {$U/O$};

\draw [->, thick] (0.5,2) -- (3.6,2) node [midway, above] {$ $};
\draw [->, thick] (4.4,2) -- (7.4,2) node [midway, above] {$ $};
\draw [->, thick] (0.5,0) -- (3.6,0) node [midway, above] {$ $};
\draw [->, thick] (4.4,0) -- (7.4,0) node [midway, above] {$ $};

\draw [->, thick] (0,1.7) -- (0,0.3) node [midway, right] {$ $}; 
\draw [->, thick] (4,1.7) -- (4,0.3) node [midway, right] {$ $};
\draw [->, thick] (8,1.7) -- (8,0.3) node [midway, right] {$ $};
\end{tikzpicture},
$$  
which induces a map between the long exact sequence of homotopy groups, which shows that $\pi_2(Sp/U) = \pi_1(U) = \Z$, $\pi_2(U/O) = \pi_1(O) = \Z/2$, and the map $\pi_2(Sp/U) \rightarrow \pi_2(U/O)$ is given by the quotient map $\Z \rightarrow \Z/2$.
\end{proof}

\begin{proposition}\label{spin polarization} 
Fix a stable complex polarization $X \to BU$ of a given $X \to BSp$.  
The space of homotopies between the canonical and polarization orientation data is equivalent
to the space of null-homotopies of the composition $X \to BU  \xrightarrow{c_1} B^2 \Z \rightarrow B^2(\mathbb{Z}/2\mathbb{Z})$.
\end{proposition}
\begin{proof}
First we recall some general facts about null-homotopies and exact triangles.
Given two null-homotopies  $n_1, n_2$ of a given map $f: Q \to R$, we produce a pointed ``comparison map'' 
$[n_1, n_2] \in Q \times S^1 \to R$ by taking $S^1 = [-\pi, \pi]$, taking the map $f$ on $Q \times 0$ and applying the null-homotopy
$n_1$ along $[-\pi, 0]$ and the null-homotopy $n_2$ along $[0, \pi]$.  Note that $\Hom(Q \times S^1, R) = \Hom(Q, \Omega R)$. 
A homotopy between $n_1$ and $n_2$ is a null-homotopy of $[n_1, n_2] \in \Hom(Q, \Omega R)$. 

Suppose now given any exact triangle $P \to Q \to R \to BP$ in a stable category 
(for the application here, the stable category of spectra).  
The compositions $P \to Q \to R$ and $Q \to R \to BP$ give two null-homotopies
of the composite map $P \to BP$.   
The definition of exact triangles \cite[Def.\ 1.1.2.11]{lurie-ha} promises
that the comparison of these two null-homotopies is identified with the identity
of $\Hom(P, \Omega BP) = \Hom(P, P)$.  

Given maps $X \xrightarrow{p} P \xrightarrow{s}  S$, we may 
compose to learn that the corresponding comparison between null-homotopies of 
$$X \xrightarrow{p} P \to Q \to R \to BP \xrightarrow{Bs} BS$$
given by
$[s \circ n_1 \circ p, s \circ n_2 \circ p] \in \Hom(X, \Omega BS)$ is identified with $s \circ p \in \Hom(X, S)$.  Here, $X$ need only be a space, not a spectrum.

Now consider 
a stable quaternionic vector bundle with a stable complex polarization, i.e. we have a lift $X \to BU \to BSp$.  
Consider the composition
\begin{equation} \label{polarization orientation data} 
X \to  BU \to  BSp \to B(Sp/ U) \to B^2 U  \xrightarrow{Bc_1} B^3 \Z \rightarrow B^3(\Z/2\Z)
\end{equation}
Now, the canonical orientation data factors through $BSp \to 0$, hence is induced by the null-homotopy of the sequence $BSp \to B(Sp/U) \to B^2 U$.  The polarization orientation data comes from the prescribed null-homotopy of $X \to B(U/O)$; since we have a complex polarization this factors through a null-homotopy of $X \to B(Sp/U)$, hence is induced from the null-homotopy of $BU \to BSp \to B(Sp/U)$.  The result follows. 
\end{proof} 

We recall that the mod 2 reduction of the first Chern class of a complex bundle is the second Stiefel-Whitney class $w_2$ of the underlying real bundle.  Thus, null-homotopies of $X \to BU \xrightarrow{w_2} B^2(\Z/2\Z)$ as above are the same as spin structures on the underlying real bundle classified by $X \to BU \to BSO$.



Let $V$ be a symplectic (resp.\ contact) manifold. Given a Lagrangian (resp.\ Legendrian) submanifold, the Weinstein neighborhood theorem symplectomorphically (resp.\ contactomorphically) identifies a neighborhood of $L \subset V$ with a neighborhood of the zero section in $T^*L$ (resp.\ in $J^1L$). This identification is canonical up to contractible choice. We write $\phi_L$ for the fiber polarization of $T^*L$ (resp.\ $J^1L$). 

\begin{definition}  \label{secondary Z-Maslov}
Let $m$ be a grading (resp. grading/orientation datum) on $V$.  Then by a {\em secondary grading} (resp.  grading/orientation datum) for $L \subset V$, we mean a homotopy between $m|_L$ and $\phi_L$. 

Given grading/orientation data on $V$ and a secondary grading on $L$, then by {\em secondary orientation data}, we mean a lift of said secondary grading on $L$ to secondary grading/orientation data on $L$.  



\end{definition} 

Note that the obstruction to existence of a secondary grading is a class in $[L, B\Z] = H^1(L, \Z)$, 
and the space of secondary gradings is a torsor for $\Map(L, \Z)$, hence in particular, is discrete.  
Thus we simply ask whether secondary gradings are equal, rather than discuss homotopies
between them. 

\begin{lemma} \label{complex legendrian maslov} 
Let $V$ be a complex symplectic manifold. Equip the underlying real symplectic manifold with the canonical
grading constructed in \Cref{fiber versus canonical grading}.   Let $L\subset V$ be a smooth complex Lagrangian.  
Then there is a canonical choice for the secondary grading of $L \subset V$. 
\end{lemma} 
\begin{proof}
On $L$, the fiber polarization provides a complex polarization of 
the restriction of the contact distribution, which, 
by Lemma \ref{fiber versus canonical grading}, agrees with the canonical grading.  
\end{proof}

\begin{lemma} \label{lem: spin}
Let $L, V$ be as in \Cref{complex legendrian maslov}. Fix the canonical grading/orientation data on  $V$, and the canonical grading on $L$.  Then secondary orientation data for $L \subset V$ is
equivalent to a spin structure on $L$.  
\end{lemma}
\begin{proof}
This is a special case of Proposition \ref{spin polarization}.  
\end{proof}

\section{Microsheaves on real contact manifolds}\label{section:sheaves-on-manifolds}

Here we review ideas from the microlocal theory of sheaves as formulated for cotangent bundles of manifolds in \cite{kashiwara-schapira} and
globalized to arbitrary contact manifolds in \cite{shende-microlocal, nadler-shende}.

\subsection{Sheaves on manifolds}

Let $M$ be  a real manifold.  
Fix a symmetric monoidal 
stable presentable compactly generated category, $\cC$.  The reader will not lose much of 
the point of the paper taking throughout $\cC$ to be the derived category of dg modules over some
commutative ring $R$.
We write $sh(M)$ for the (stable) category of sheaves on $M$ with values in $\cC$.  

In this subsection we review ideas from Kashiwara and Schapira \cite{kashiwara-schapira}.  Often these were originally formulated for bounded derived categories, viewed
as triangulated categories.  Modern foundations \cite{lurie-htt, lurie-ha} allow one to work directly in the stable setting, and in addition for the 
boundedness hypothesis to be removed for many purposes; we do so when appropriate without further comment.

\subsubsection{Microsupport}   
Given $F \in sh(M)$, we say that a smooth function $f: M \to \R$ has a cohomological $F$-critical point at $x \in M$ if $(j^!F)_x \neq 0$ for $j: \{f \geq 0\} \hookrightarrow M$ the inclusion. The {\em microsupport} of $F$ (also called the \emph{singular support}) is the closure of the locus of differentials of functions at their cohomological $F$-critical points. We denote it by $ss(F)$. 

The microsupport is easily seen to be conical and satisfy $ss(Cone(F \to G)) \subset ss(F) \cup ss(G)$. A deep result of \cite[Thm.\ 6.5.4]{kashiwara-schapira} is that the microsupport is \emph{coisotropic} (also called \emph{involutive}; see \cite[Def.\ 6.5.1]{kashiwara-schapira}). 
For a conic subset $K \subset T^*M$, we write $sh_K(M)$ for the full subcategory on objects microsupported in $K$.  
For any subset $\Lambda \subset S^*M$, we write $sh_\Lambda(M) := sh_{\R_+ \Lambda \cup 0_M}(M)$, with $0_M$ the zero section of the cotangent bundle. We write $T^\circ M:= T^*M- 0_M$; following the usual convention, we will not distinguish between subsets of $S^*M$ and conic subsets of $T^*M- 0_M$.

The assignment $U \mapsto sh(U)$ defines a sheaf of categories on $M$; we denote it $sh$.  Similarly, 
$U \mapsto sh_{K \cap T^*U}(U)$ defines a subsheaf of full subcategories, we denote it $sh_K$.  Similarly, $sh_\Lambda$. 

\subsubsection{Constructibility}

\begin{definition}
A \emph{stratification} of a topological space $X$ is a locally finite decomposition $X= \sqcup_\alpha X_\alpha$, where the $X_\alpha$ are pairwise disjoint locally closed subsets called \emph{strata}. This decomposition must satisfy the \emph{frontier condition}: the boundary $\overline{X}_\alpha \setminus X_\alpha$ is a union of other strata. 
\end{definition}

\begin{theorem}[Thm. 8.4.2 of \cite{kashiwara-schapira}] 
Let $M$ be a real analytic manifold, and $F$ a sheaf on $M$.  Then the following are equivalent:
\begin{itemize}
\item There is a subanalytic stratification $M = \coprod M_i$ such that $F|_{M_i}$ is locally constant
\item $ss(F)$ is subanalytic (singular) Lagrangian
\end{itemize} 
Sheaves satisfying these equivalent conditions are said to be $\R$-constructible.  We write 
$Sh_{\R, c}(M)$ for the category of $\R$-constructible sheaves. 
\end{theorem}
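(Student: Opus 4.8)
Since this is Theorem 8.4.2 of \cite{kashiwara-schapira}, the plan is to recall the structure of its proof. Both implications are local on $M$, so I would first pass to a subanalytic chart and argue over a neighborhood of a point.

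For the direction ``$F$ locally constant on the strata of a subanalytic stratification $\Rightarrow$ $ss(F)$ subanalytic Lagrangian'', I would begin by refining the given stratification to a subanalytic Whitney stratification --- a $\mu$-stratification in the sense of \cite[\S 8.3]{kashiwara-schapira} --- $M = \coprod_\alpha S_\alpha$, on whose strata $F$ remains locally constant (each refined stratum lies in an original one); this uses the standard existence theorems for subanalytic Whitney stratifications. Set $\Lambda := \bigcup_\alpha T^*_{S_\alpha}M$, a closed conic subanalytic Lagrangian subset. The analytic core of this direction is the local estimate $ss(F) \subseteq \Lambda$; one argues by induction on the dimension of the strata, using the local Thom--Mather triviality of a Whitney stratification together with the elementary functoriality of $ss$ under smooth maps and the fact that the microsupport of the constant sheaf on a closed submanifold is its conormal bundle (this is the content of the opening results of \cite[\S 8.4]{kashiwara-schapira}). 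To finish, I would invoke that $ss(F)$ is a closed conic \emph{coisotropic} set (\cite[Thm.\ 6.5.4]{kashiwara-schapira}) contained in the Lagrangian $\Lambda$: over the smooth locus of $\Lambda$ it is then open and closed, so $ss(F)$ is a union of irreducible components of $\Lambda$ --- in particular subanalytic and Lagrangian.

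For the converse, ``$ss(F) = \Lambda$ subanalytic Lagrangian $\Rightarrow$ $F$ constructible'', I would choose a subanalytic Whitney ($\mu$-)stratification $M = \coprod_\alpha S_\alpha$ adapted to $\Lambda$, meaning $\Lambda \subseteq \bigcup_\alpha T^*_{S_\alpha}M$; the existence of such a stratification attached to a conic subanalytic Lagrangian is again part of the $\mu$-stratification formalism of \cite[\S 8.3]{kashiwara-schapira}. It then suffices to show $F|_{S_\alpha}$ is locally constant for each $\alpha$. Fixing $x \in S_\alpha$, I would use the local Thom--Mather structure near $x$ to produce a family of functions whose sublevel sets sweep out a neighborhood of $x$ in $S_\alpha$ and whose differentials, thanks to the Whitney conditions and the containment $ss(F) \subseteq \bigcup_\beta T^*_{S_\beta}M$, avoid $ss(F)$ away from the zero section; the microlocal Morse / non-characteristic deformation lemma \cite[\S 5.4]{kashiwara-schapira} then forces the relevant restriction maps to be isomorphisms, giving local constancy of $F$ along $S_\alpha$.

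I expect the converse to be the main obstacle. Granting the existence theorems for subanalytic $\mu$-stratifications (standard subanalytic geometry) and the microlocal Morse lemma, the delicate point is to check that the chosen $\mu$-stratification is genuinely adapted --- that the Whitney conditions really do force every nonzero covector in $ss(F)$ lying over a stratum to be conormal to that stratum --- so that the deformation argument applies uniformly over each stratum. This compatibility is exactly the substance of the $\mu$-stratification formalism of \cite[\S\S 8.3--8.4]{kashiwara-schapira}, and it is what makes the theorem more than a formal manipulation.
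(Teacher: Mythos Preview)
The paper does not give its own proof of this statement: it is quoted verbatim as Theorem 8.4.2 of \cite{kashiwara-schapira} and used as a black box, so there is nothing in the paper to compare your argument against.

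That said, your outline is a faithful summary of the Kashiwara--Schapira proof. The key engine is exactly what you identify: the equivalence (their Proposition 8.4.1) between weak constructibility for a $\mu$-stratification $\{S_\alpha\}$ and the containment $ss(F)\subseteq\bigcup_\alpha T^*_{S_\alpha}M$, proved in one direction via the non-characteristic deformation lemma and in the other via the functoriality of $ss$ under the local trivializations furnished by the $\mu$-condition. Your use of coisotropicity of $ss(F)$ to upgrade ``contained in a Lagrangian'' to ``is Lagrangian'' is also the standard step. One small sharpening: for the forward direction you do not really need an inductive Thom--Mather argument on the dimension of strata; once you have a $\mu$-stratification, the containment $ss(F)\subseteq\bigcup_\alpha T^*_{S_\alpha}M$ follows directly from the definition of the $\mu$-condition together with the propagation estimate, and this is how \cite{kashiwara-schapira} organizes it.
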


\begin{theorem}[Thm.\ 8.5.5 of \cite{kashiwara-schapira}] \label{theorem:complex-microsupport}
Let $M$ be a complex analytic manifold, and $F$ a sheaf on $M$.  Then the following are equivalent: 
\begin{itemize}
\item There is a complex analytic stratification $M = \coprod M_i$ such that $F|_{M_i}$ is locally constant
\item $ss(F)$ is contained in a closed $\C^*$-conic subanalytic isotropic subset
\item $ss(F)$ is a complex analytic (singular) Lagrangian
\end{itemize} 
Sheaves satisfying these equivalent conditions are said to be $\C$-constructible.  We write 
$Sh_{\C, c}(M)$ for the category of $\C$-constructible sheaves. 
\end{theorem}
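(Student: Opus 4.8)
The plan is essentially to invoke \cite[Thm.\ 8.5.5]{kashiwara-schapira}; for the reader's orientation I sketch how the argument goes, since it combines the involutivity theorem \cite[Thm.\ 6.5.4]{kashiwara-schapira} with the $\R$-analytic statement \cite[Thm.\ 8.4.2]{kashiwara-schapira} recalled just above. I would argue in the cycle $(1)\Rightarrow(2)\Rightarrow(3)\Rightarrow(1)$. For $(1)\Rightarrow(2)$: if $F$ is locally constant along the strata of a complex analytic Whitney stratification $M=\coprod_i M_i$, the standard microsupport estimate for stratified sheaves gives $ss(F)\subseteq\bigcup_i\overline{T^*_{M_i}M}$, and the right-hand side is closed, subanalytic, $\C^*$-conic (each conormal to a complex submanifold is a $\C^*$-invariant complex subvariety), and isotropic (on its smooth locus, a dense open subset of $\coprod_i T^*_{M_i}M$, the tangent space is the Lagrangian $T(T^*_{M_i}M)$).

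For $(2)\Rightarrow(3)$: by \cite[Thm.\ 6.5.4]{kashiwara-schapira} the set $ss(F)$ is closed, conic, subanalytic, and \emph{coisotropic}. A coisotropic subanalytic set contained in an isotropic one is forced to be (singular) Lagrangian, since at a generic smooth point of any component its tangent space is at once coisotropic and isotropic, hence Lagrangian, so every component has dimension $n=\dim_\C M$. It then remains to upgrade ``subanalytic Lagrangian'' to ``complex analytic Lagrangian'', and this is exactly where one uses $\C^*$-conicity rather than mere $\R_+$-conicity: a closed $\C^*$-conic subanalytic Lagrangian is complex analytic, being locally the conormal variety of a complex analytic subset, by the complexification argument in \cite[\S 8.5]{kashiwara-schapira}.

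For $(3)\Rightarrow(1)$: since $ss(F)$ is in particular a subanalytic Lagrangian, $F$ is $\R$-constructible by \cite[Thm.\ 8.4.2]{kashiwara-schapira}; but a subanalytic Whitney stratification adapted to a merely subanalytic set need not be refinable by a complex analytic one, so here one must genuinely use that $ss(F)$ is now complex analytic. Concretely, I would apply complex analytic Whitney stratification theory to the pair $\big(M,\pi(ss(F))\big)$, refined further so that $ss(F)$ is a union of conormal bundles to the strata, producing a complex analytic stratification $M=\coprod_i M_i$ with $ss(F)\subseteq\bigcup_i\overline{T^*_{M_i}M}$; for such a stratification $F$ is automatically locally constant on each $M_i$ (the microsupport of $F|_{M_i}$ lies in the zero section), which is $(1)$.

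The hard parts are the complex-analyticity upgrade inside $(2)\Rightarrow(3)$ and the complex analytic refinement of the stratification in $(3)\Rightarrow(1)$: both are precisely where the $\C^*$-action enters rather than only the $\R_+$-scaling, via the structure theory of $\C^*$-conic subanalytic Lagrangians. Everything else is formal from involutivity together with the $\R$-analytic theorem, so I would simply cite \cite[Thm.\ 8.5.5]{kashiwara-schapira} and not reproduce these.
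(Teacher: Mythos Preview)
Your proposal is fine: the paper does not give its own proof of this statement but simply records it as \cite[Thm.\ 8.5.5]{kashiwara-schapira}, so your plan to cite that reference (with an orienting sketch of the cycle $(1)\Rightarrow(2)\Rightarrow(3)\Rightarrow(1)$ via involutivity and the $\R$-constructible theorem) is exactly in line with the paper's treatment.
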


\subsection{Microsheaves on cotangent bundles} 

We consider the presheaf of stable categories on $T^*M$: 
\begin{equation} \label{ks mush}
\mu sh_{T^*M}^{pre}(\Omega) := sh(M) / sh_{T^*M \setminus \Omega}(M)
\end{equation}

\begin{definition}
Let $\mu sh_{T^*M}$ be the sheaf of categories on $T^*M$ defined by sheafifying the presheaf $sh_{T^*M}^{pre}(\Omega)$ in \eqref{ks mush}. Similarly, let $\mu sh_{S^*M}$ be the presheaf of categories on $S^*M$ obtained by sheafifying $\mu sh^{pre}_{S^*M}$.
\end{definition}

In any sheaf of categories $\mathcal{X}$ on a topological space $T$, given $F, G \in \mathcal{X}(T)$, the assignment
$U \mapsto \Hom_{\mathcal{X}(U)}(F|_\Omega, G|_\Omega)$ is a sheaf on $T$; let us denote it as $\cH om_{\mathcal{X}}(F, G)$.  A fundamental result of Kashiwara and Schapira computes this Hom sheaf via sheaf operations \cite[Thm. 6.1.2]{kashiwara-schapira}:\footnote{More precisely, \cite{kashiwara-schapira} shows there is
 a morphism of this kind for $\mu sh^{pre}$ which is an isomorphism at stalks; the stated result follows upon sheafification.  
 See \cite{nadler-shende} for some detailed discussions about the sheafification of $\mu sh^{pre}$.} 
\begin{equation} \label{hom of microsheaves is muhom} 
\cH om_{\mu sh}(F, G) = \mu hom(F, G) := \mu_\Delta \Hom_{M \times M}(\pi_1^* F, \pi_2^! G). 
\end{equation}


For $F \in sh(M)$, one finds that the support of the image of $F$ in $\mu sh(\Omega)$ is $ss(F) \cap \Omega$.  For this reason, 
for any object $G \in \mu sh(\Omega)$, we sometimes write $ss(G)$ for the support of $G$. 

The sheaf $\mu sh_{T^*M}$ is conic, i.e. equivariant for the $\R^+$ scaling action.  In particular, $\mu sh_{T^*M}|_{T^\circ M}$ is locally constant
in the radial direction.  This being a contractible $\R^+$, we may define a sheaf of categories $\mu sh_{S^*M}$ on $S^*M$ equivalently by pushforward 
or pullback along an arbitrary section of $T^\circ M \to S^*M$. As $ss(G)$ is conic for $G \in \msh_{S^* M}$, it uniquely determine a set $ss^\infty(G) \subseteq S^* M$.

\begin{definition}\label{definition:microsheaves-cotangent}
For conic $K \subset T^*M$, let $\mu sh_{K} \subset  \mu sh_{T^*M}$ be the subsheaf of full subcategories on objects supported in $K$. Similarly, for  
$\Lambda \subset S^*M$, let $\mu sh_{\Lambda} \subset \mu sh_{S^*M}$ be the subsheaf of full subcategories on objects supported in $\Lambda$. 
\end{definition}

\begin{proposition}{\cite[Prop. 6.6.1]{kashiwara-schapira}} \label{existence of microstalk}
Let $M$ be a manifold and let $N \subset M$ be a submanifold. Let $L= T^*_NM$ and fix a point $p \in L$. Then there is an equivalence of categories: 
\begin{align} \label{stupid microstalk}
\cC &\xrightarrow{\sim} (\mu sh_{L})_p \\
A &\mapsto A_N, \nonumber
\end{align}
where $A_N$ is the image in $\mu sh$ of the constant sheaf on $N$ with value $A$.  
The corresponding result of course holds, and we use the same notations, in $S^*M$. 
\end{proposition}

\begin{definition} \label{microlocally constructible} 
For $M$ real analytic, we define $\mu sh_{\R-c} \subset \mu sh$ as the subsheaf
of full subcategories on objects whose support is subanalytic and the closure of its smooth Lagrangian locus.  

For $M$ complex 
analytic, we define $\mu sh_{\C-c} \subset \mu sh_{\R-c}$ as the subsheaf of full subcategories
on objects whose support is complex analytic and the closure of its smooth Lagrangian locus.
\end{definition}

Alternatively, one can also restrict to constructible sheaves all from the beginning and define
\begin{equation} \label{ks mush-R-c}
\mu sh_{T^*M, \R-c}^{pre}(\Omega) := sh_{\R-c}(M) / sh_{T^*M \setminus \Omega, \R-c}(M),
\end{equation}
and consider its sheafification. For a conic Lagrangian $\Lambda \subseteq T^* M$, define
\begin{equation}\label{ks mush-fixed}
\mu sh_{\R-c, \Lambda}^{pre}(\Omega) := sh_{\R-c, \Lambda \cup (T^* M \setminus \Omega)}(M) / sh_{\R-c, T^*M \setminus \Omega}(M). 
\end{equation}
Unwrapping the definition, we see that $ sh_{\R-c, \Lambda \cup (T^* M \setminus \Omega)}(M)= \{F \in sh_{\R-c}(M) | ss(F) \cap \Omega \subseteq \Lambda\}$. Similarly, one can define $\msh^\pre_{T^* M, \C-c}$ when $M$ is complex, and $\msh^\pre_{\Lambda, \C-c}$ when $\Lambda$ is complex.
By microlocal cut-off, these notions agree with the ones given previously.

\begin{lemma}{\cite[Theorem 3.2.2]{waschkies-microperverse}} \label{lem: msh_definitions-agree} 
The canonical map $\msh_{T^* M, \R-c}^\pre \rightarrow \msh_{T^* M, \R-c}$ induces an isomorphism upon sheafification. The similar statement holds for $\msh_{\R-c, \Lambda}$, $\msh_{T^* M, \C-c}$, and $\msh_{\C-c, \Lambda}$.
\end{lemma}

We will need the following lemma in the next section. Let $i: N \hookrightarrow M$ be an inclusion of closed submanifolds. The map $i$ induces an inclusion $i_\pi: T^*M|_N \hookrightarrow T^*M$ and the transpose of its derivative induces a projection 
\begin{align*}
(di)^t: T^* M|_N &\rightarrow T^* N\\
(y,\xi) &\mapsto (y, \xi \circ di_y).
\end{align*}

\begin{lemma} \label{lem: msh-on-sub}
The $*$-pushforward, $i_*: sh(N) \xrightarrow{\sim} sh_N(M)$, microlocalizes to an equivalence 
$$  [(di)^t]^* \mu sh_{T^* N} \xrightarrow{\sim} \mu sh_{T^* M|_N}.$$
\end{lemma}
(Here $\mu sh_{T^* M|_N} \subset \mu sh_{T^*M}$  is the subsheaf of full subcategories on objects supported in $T^*M|_N \subset T^*M$; see \Cref{definition:microsheaves-cotangent}.)
\begin{proof} 
Since $i$ is a closed embedding, \cite[Proposition 5.4.4]{kashiwara-schapira} implies that $ss(i_* F) = ({(di)^t})^{-1} ss(F)$ is contained in $T^* M |_N$ for $F \in sh(N)$. This implies that the assignment 
\begin{align*}
\mu sh_{T^* N}^{pre}(\Omega) &\rightarrow \left(({(di)^t})_* {i_\pi}^* \mu sh_{T^* M|_N}^{pre} \right)(\Omega)\\
F &\mapsto ({i_\pi})_* F
\end{align*}
is well-defined. To check the induced map $({(di)^t})^* \mu sh_{T^* N} \rightarrow {i_\pi}^* \mu sh_{T^* M}$ on sheaves is an equivalence, one can check at stalks. Fully faithfulness is then implied by \cite[Proposition 4.4.7(ii)]{kashiwara-schapira}, as \eqref{hom of microsheaves is muhom} shows that the Hom is computed by $\mu hom$. Essential surjectivity is implied by \cite[Prop. 6.6.1]{kashiwara-schapira}.
\end{proof}

Before we leave this section, we mention two common tools for studying microsheaves. The first is the contact transformation.

\begin{theorem}[{\cite[Corollary 7.2.2]{kashiwara-schapira}}] \label{thm: real-contact-transform}
Let $\cU \subseteq S^* M$, and $\cV \subseteq S^* N$ be open sets, and $\chi: \cU \xrightarrow \cV$ be a contactomorphism. Then, for any given $p \in \cU$, shrink $\cU$ if needed, one can assume that there exists a sheaf $K \in sh(M \times N)$ such that the functor $\Phi_K: sh(M) \rightarrow sh(N)$ given by convolving with $K$ induces an equivalence 
\[ \Phi_K: \msh^{pre}_{S^* M} |_{\cU} \xrightarrow{\sim } \chi^* \left( \msh^{pre}_{S^* N}|_{\cV}\right). \]
Consequently, it induces an equivalence $\msh_{S^* M}|_{\cU} \xrightarrow{\sim} \chi^* \msh_{S^* N} |_{\cV}$ which commutes with the canonical map $\msh^{pre} \rightarrow \msh$.
\end{theorem}

The other one is the doubling trick, which states that, if $\cF \in \msh(\Omega)$, for some open set $\Omega \subseteq S^* M$, is a microsheaf whose support $ss^\infty(\cF)$ is contained in some closed Legendrian $\Lambda \subseteq S^* M$, then $\cF$ can be represented by a genuine sheaf $F$ (with possibly larger microsupport).

\begin{theorem}[{\cite[Theorem 7.18]{nadler-shende}, \cite[Theorem 4.47]{kuo-li-spherical} }] \label{thm: doubling}
 Let $\Lambda \subseteq S^* M$ be a closed Legendrian and $\Omega \subseteq S^* M$ an open set. Choose a positive isotopy $\Lambda_t$ of $\Lambda$ in $S^* M$ such that $\alpha(\partial_t \Lambda_t)$ is nonzero on $\Omega$ and outside $\Omega$. Then, for some small enough $\epsilon$, the composition 
 \[sh_{\Lambda \cup \Lambda_\epsilon}(M) \rightarrow \msh_{\Lambda\cup\Lambda_\epsilon}(\Omega) \rightarrow \msh_{\Lambda}(\Omega)\] 
is surjective. Here, we use the fact that $\Lambda$ and $\Lambda_\epsilon$ are disjoint in $\Omega$ for the second arrow. 
\end{theorem}

\subsection{Microsheaves and Maslov index} \label{sec: maslov index}  In this subsection, we describe following \cite{chiu2017nonsqueezing} the sheaf quantization of a rotation of $J^1\mathbb{R}^n$. This material will be used explicitly only in \Cref{subsection:canonicalmicrostalkcomplex}.

Endow
\(T^*\mathbb{R}^n \times T^*\mathbb{R}\) with coordinates
\((x,\xi,t,\tau)\).  Identify the \(1\)-jet space
\(J^1\mathbb{R}^n\) with the hyperplane \(\{\tau=1\}\subset
T^*\mathbb{R}^n\times T^*\mathbb{R}\); the contact form
restricted to \(\{\tau=1\}\) is
\[
\alpha \;=\; \xi\cdot dx + dt.
\]

Fix $1 \leq \ell \leq n$. As in \cite{chiu2017nonsqueezing}, we consider a $1$-parameter family of contactomorphisms $\psi_t:J^1\mathbb{R}^n\to J^1\mathbb{R}^n$ 
\(\psi_t:J^1\mathbb{R}^n\to J^1\mathbb{R}^n\) given by
\[
\psi_t(x,\xi,\tau) \;=\; (x',\xi',\tau')
\]
with (for \(i=1,\dots,\ell\))
\[
\begin{aligned}
x'_i &= (\cos t)\, x_i - (\sin t)\,\xi_i,\\[4pt]
\xi'_i &= (\sin t)\, x_i + (\cos t)\,\xi_i,
\end{aligned}
\]
and for \(i=\ell+1,\dots,n\)
\[
x'_i = x_i,\qquad \xi'_i=\xi_i.
\]

The \(\tau\)-coordinate transforms by
\[
\tau' \;=\; \tau + F_{t}(x,\xi),
\]
where,
\begin{equation} \label{eq: action-functional}    
F_{t}(x,\xi) = \sum_{i=1}^\ell\left(\frac{\sin t\cos t}{2}\,(x_i^2-\xi_i^2)-2 \sin^2 t \cdot x_i\cdot \xi_i\right).
\end{equation}

Note that $\psi_t$ is the lift of a $1$-parameter family of sympectomorphisms of $T^*\mathbb{R}^n$ which rotates the $(x_i, \xi_i)$ coordinates clockwise for $1 \leq i \leq \ell$.  

These symplectomorphisms admit a sheaf quantization:  

\begin{proposition}[{\cite[Proposition 3.10]{chiu2017nonsqueezing}}\footnote{As the Hamiltonian is not compactly supported, the quantization does not follow directly from the usual result of Guillermou, Kashiwara, and Schapira \cite[Proposition A.6]{guillermou-kashiwara-schapira}.}] \label{prop: chiu-kernel}
Consider the space $\R^n \times \R^n \times \R_t \times \R_s$ where we view $s$ as the time direction. Then, there exists a sheaf kernel $\cS \in sh(\R^n \times \R^n \times \R_t \times \R_s)$ such that 
\begin{enumerate}
    \item $\cS|_{s = 0}=1_{\Delta_{\R^n}} \boxtimes 1_{[0,\infty)}$.
    \item $ss(\cS) \cap \{\sigma > 0\} \subseteq \Lambda_{\Psi}$ where we use $\sigma$ to denote the cotangent coordinate for $\R_s$ and $\Lambda_\Psi$ is the Legendrian lift of the isotopy movie of the Hamiltonian rotation.
    \item Over the open interval $s \in (0,\pi)$, we have the explicit formula,
    \[\cS|_{ (0,\pi)_s} = 1_{ \{(x,y,t,s)|t + F_{2s}(x,y) \geq 0 \}}.\]
\end{enumerate}
\end{proposition}

For a sheaf kernel $\cK \in sh(M \times N \times \R_t)$, one can consider the $\star$-convolution 
\begin{align*}
    sh(M \times \R_{t_1}) &\rightarrow sh(N \times \R_t) \\
    \cF &\mapsto \cK \star \cF \coloneqq {(p_{N,+})}_!( p_2^* \cK \otimes p_{M,1}^* \cF) 
\end{align*}
where $p_{M,1}$ and $p_2$ are the usual projections from $M \times N \times \R_{t_1} \times \R_{t_2}$ to 
$M \times \R_{t_1}$ and $M \times N \times \R_{t_2}$ but, when pushing forward, we use the addition for the $t$ direction by $p_{N,+}(x,y,t_1,t_2) = (y,t)$, $t = t_1 + t_2$ instead. 

By (2) of the above \Cref{prop: chiu-kernel}, $\cS \star (-)$ restricts to an equivalence on $\mu sh^{pre} |_{\{\sigma > 0\}}$ and hence to  the restriction of $\mu sh$ to $\{\sigma > 0\}$. 

Let \(K:=\{\xi=0,\ \tau=0\} = 0_{\mathbb{R}^n} \times \{0\} \subset J^1\mathbb{R}^n\) be the zero
section, and write \(p=(0,0,0)\) for the origin. Set \(K_t:=\psi_t(K)\)
and note that \(K_{\pi/2}\) is the conormal to
\(C_k \coloneqq \{x_1=\cdots=x_k=0\}\subset\mathbb{R}^n\).

\begin{lemma}\label{lemma:fourier-shift-diagram}
We have the following diagram: 
\begin{equation}
\begin{tikzcd}
\mathcal{C}
  \arrow[rr, "A \mapsto A_L \boxtimes 1_{[0, \infty)}"] 
  \arrow[d, "{[-k]}"'] 
&& (\mu sh_K)_p 
  \arrow[d] 
\\
\mathcal{C} 
  \arrow[rr, "A \mapsto A_{C_k} \boxtimes 1_{[0, \infty)}"] 
&& (\mu sh_{K_{\pi/2}})_p
\end{tikzcd}
\end{equation}
where the right downward arrow is induced by the sheaf quantization of $\psi_{\pi/2}$ furnished by \Cref{prop: chiu-kernel}.
\end{lemma}

\begin{proof}
By the K\"unneth formula, it is sufficient to prove the case for $l=1$. Since we are rotating the Lagrangian by $90^o$, we set $s = \pi/2$ and we have $\cS|_{s = \pi/2} = 1_{ \{t - \frac{1}{2} x \xi \}}$ by (\ref{eq: action-functional}). Thus, seeing the commutativity amounts to computing the object
\[ \cS |_{s = \pi/2} \star (A_{\R^1} \boxtimes 1_{[0,\infty)_t}) = (p_{2,+})_! \left( A_{\{(x,y,t_1,t_2)| t_1 \geq 0, \ t_2 - \frac{1}{2}  \geq 0 \} } \right).  \]
Now, the projection $p_{2,+}$ can be decomposed to the naive projection $q(x,y,t_1,t_2) = (y,t_1,t_2)$  follows by the addition $a(y,t_1,t_2) = (y,t_1 + t_2)$, we can thus compute the pushforward in two steps. The claim is that $q_! \left( A_{\{(x,y,t_1,t_2)|t_1 \geq 0, \ t_2 - \frac{1}{2} x y \geq 0\} } \right) = A_{\{0\}}[-1] \boxtimes 1_{[0,\infty)_{t_1}} \boxtimes 1_{[0,\infty)_{t_2}}$, and we will be done because the projection formula will imply that 
\[a_!\left( A_{\{0\}}[-1] \boxtimes 1_{[0,\infty)_{t_1}} \boxtimes 1_{[0,\infty)_{t_2}} \right) = A_{\{0\}}[-1] \boxtimes (1_{[0,\infty)_{t_1}} \star 1_{[0,\infty)_{t_2})}) = A_{\{0\}}[-1] \boxtimes 1_{[0,\infty)_{t}} \]
as we desired. To see the claim, again by the projection formula, we can ignore $t_1$ and we only have to show that, if $f(x,y,t) \coloneqq (y,t)$, then $f_!( A_{\{t - \frac{1}{2} xy \geq 0\}}) = A_{(\{0,t)| t \geq 0\}} [-1] = A_{\{0\}}[-1]\boxtimes 1_{[0,\infty)}$. Lastly, we can check on stalks, and consider, for $(y,t) \in \R^2_{(y,t)}$,
\[ \left(f_! (A_{\{t - \frac{1}{2} xy \geq 0 \}}) \right)_{(y,t)} = \Gamma_c( \{x \in \R | t - (1/2) xy \geq 0 \}; A)   \]
the compactly supported sections on the set $\{x \in \R|t - \frac{1}{2} xy \geq 0\}$. We note that, when $y > 0$, $t - \frac{1}{2} xy$ is equivalent to $x \leq \frac{2t}{y}$ so for any pair $(y,t)$ the set $\{x \in \R|t - \frac{1}{2} xy \geq 0\}$ is topologically $[0,\infty)$ which has a vanishing compactly supported cohomology. The same situation holds when $y < 0$. Thus, the only non-trivial stalks live over $y=0$, in which case $x$ has no condition. In other words, the stalk is given by $\Gamma_c(\R;A) = A[-1]$ and is exactly what we wanted. 
\end{proof}

\begin{remark}
    \Cref{lemma:fourier-shift-diagram} can  presumably also be extracted from the  discussion in \cite[Sec.\ 7.5, Appendix A]{kashiwara-schapira}. Similar computations have also appeared in e.g.\  \cite{guillermou, jin-BO}.
\end{remark}

\subsection{Microsheaves on polarized (real) contact manifolds} 
In the previous section, we discussed microlocal sheaves on cotangent/cosphere bundles. The high-codimensional embedding trick of \cite{shende-microlocal} extends the definition of microlocal sheaves to arbitrary contact or exact symplectic manifolds equipped with Maslov data. In this section, we review how this works on polarized contact manifolds; we postpone the discussion of Maslov data to the next section.


The starting point of \cite{shende-microlocal} is the following lemma, which is a consequence of the functoriality of $\mu sh$ under (quantized) contact transformation \cite[Sec.\ 7]{kashiwara-schapira}.
\begin{lemma} \label{lem: thickening} 
Suppose given a contact manifold $V$ and a contact embedding $\iota: V \times T^*D^n \hookrightarrow S^* M$, for some disk $D^n$ and some manifold $M$. 
Then 
$\mu sh_{\iota(V \times D^n)}$ is locally constant along $D^n$, hence the pullback of a sheaf of categories
along $V \times D^n \to V$. 
\end{lemma} 

\begin{proof}
The statement is local, so we may assume $V$ is a Darboux ball. By the contact neighborhood theorem \cite[Theorem 2.5.15]{geiges2008introduction}, there is an open ball $U \subset S^*\mathbb{R}^{n+m}$ and a contactomorphisms $(V \times T^*D^n, V \times \{0\}) \simeq (U, U \cap S^*\mathbb{R}^n)$, where $S^*\mathbb{R}^m \subset S^*\mathbb{R}^{n+m}$ is induced by the standard inclusion $\R^m = \mathbb{R}^n \times \{0\}^n \hookrightarrow \R^{m + n}$. By the functoriality of $\mu sh$ under (quantized) contact transformation \cite[Sec.\ 7]{kashiwara-schapira}, we may therefore assume without loss of generality that $V = U \cap S^*\mathbb{R}^n$. But, by \Cref{lem: msh-on-sub}, we conclude that $\mu sh_{S^* \R^m \times 0_{\R^n}} = q^* \mu sh_{S^* \R^m}$ where $q: S^* \R^m \times T^* \R^n \rightarrow S^* \R^m$ is the projection.
\end{proof}


The basic idea of  \cite{shende-microlocal} is to use Lemma \ref{lem: thickening} as a definition of $\mu sh_V$.  
Let $(V, \xi)$ be a contact manifold.  Suppose given
a (possibly positive codimensional)
contact embedding 
$\iota: V \to S^*M$ and any Lagrangian distribution $\eta$ of the symplectic normal bundle $N_V$ to $\iota$. The choice of $\eta$ comes with a coisotropic embedding $\tau_\eta: \operatorname{Op}(0_V \cap \eta) \to S^*M$ such that the differential satisfies $\operatorname{im} (d \tau_\eta: \{0\} \times \eta(p)  \subset T_pV \times (N_V)_p \to T_{\tau_\eta(p,0)}S^*M)= \eta(p),$
for all $p \in V$. Here $0_V$ is the zero section of the symplectic normal bundle $N_V$. 
We denote by $V(\eta)$ the image of $\tau_\eta$ and call it a \emph{thickening} of $\iota(V)$ along $\eta$.

\begin{lemma}
The sheaf of categories $\mu sh_{V(\eta)}$ is the pullback of certain a sheaf of categories on $V$.  We will denote this sheaf as  $\mu sh_{V,\iota,\eta}$. 
\end{lemma}

\begin{proof}
The Lagrangian bundle structure $\eta$ provides, near $V$, a contraction $r: V(\eta) \rightarrow V$, and thus defines a sheaf $\mu sh_{V, \iota, \eta} \coloneqq r_* \mu sh_{V(\eta)}$ by pushforward. However, the map $r$ is locally given by the projection $V \times D^n \rightarrow V$, 
and we've shown in \Cref{lem: thickening} that the sheaf $\mu sh_{V(\eta)}$ is constant along the fiber direction, so $\mu sh_{V(\eta)}$ can be recovered from $\mu sh_{V,\iota,\eta}$ by
$$\mu sh_{V(\eta)} = r^* r_* \mu sh_{V(\eta)} = r^* \mu sh_{V,\iota,\eta}.$$
This completes the proof. 
\end{proof}
 

To eliminate the dependence on $\iota$ and $M$, one notes that Gromov's $h$-principle for
contact embeddings implies the existence
of high codimension embeddings of $V$ into the standard contact $\R^{2n+1}$ for
large enough $n$, the space of which moreover becomes arbitrarily connected
as $n \to \infty$.   There remains the (continuous) dependence on a choice of 
polarization of the stable symplectic normal bundle (``stable normal polarization'').  That is:

\begin{theorem} \label{normally polarized contact microsheaves} 
\cite{shende-microlocal}
Given a contact manifold $(V, \xi)$ and 
a polarization
$\eta$ of the stable symplectic normal bundle to $V$, 
there is a canonical sheaf of categories $\mu sh_{V, -\eta}$ on $V$, locally isomorphic
to $\mu sh_{S^*M}$ for any $M$ of the appropriate dimension.
\end{theorem}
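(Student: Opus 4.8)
The plan is to exhibit the sheaf of categories directly as a homotopy limit of the locally-defined sheaves $\iota^*\mu sh_{V(\eta)}$ over the space of embedding data, and then to argue that the limit is independent (up to canonical equivalence) of all choices except the stable normal polarization. First I would set up the relevant space of parameters. By Gromov's $h$-principle for contact embeddings, the space $\mathrm{Emb}(V, \R^{2N+1})$ of (high-codimension, isotropic-in-the-appropriate-sense) contact embeddings into the standard contact $\R^{2N+1}$ is nonempty for $N \gg 0$, and becomes $k$-connected for any fixed $k$ once $N$ is large enough; moreover stabilization $\R^{2N+1} \hookrightarrow \R^{2N+3}$ gives compatible maps, so we may pass to a colimit in $N$ and obtain a space $\mathcal{E}$ that is weakly contractible. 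Over $\mathcal{E}$ there is a tautological choice of symplectic normal bundle; choosing in addition a Lagrangian distribution $\eta$ of its stabilization is exactly the data of a stable normal polarization, and the space of such choices compatible with a fixed stable polarization class is again contractible (the fiber of a map of spaces of sections).

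Second, for each point of this parameter space I would invoke Lemma \ref{thickening}: given the embedding $\iota$ and the distribution $\eta$, form the thickening $V(\eta) \subset S^*M$ (with $M$ a tubular neighborhood, of the appropriate dimension), and set the candidate local model to be $\iota^*\mu sh_{V(\eta)}$, which by the discussion preceding the theorem is locally equivalent to $\mu sh_{S^*M}$ in the bundle of the same fiber dimension. The key point is that a path in the parameter space — a homotopy of embeddings together with a homotopy of Lagrangian distributions — induces, via a parametrized quantized contact transformation (the contact isotopy extension theorem plus the functoriality of $\mu sh$ under quantized contact transformations from \cite[Sec.\ 7]{kashiwara-schapira}), a canonical equivalence between the two local models, and a homotopy of paths induces a homotopy of equivalences, all coherently. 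This gives a functor from (the fundamental $\infty$-groupoid of) the parameter space to sheaves of categories on $V$; since the parameter space with its polarization constraint is contractible, the homotopy limit over it is a single sheaf of categories $\mu sh_{V, -\eta}$, well-defined up to contractible choice, depending only on the stable normal polarization. Local isomorphism with $\mu sh_{S^*M}$ is then immediate from the construction.

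The main obstacle I anticipate is precisely the coherence of the quantized contact transformations: \cite{kashiwara-schapira} establish functoriality of $\mu sh$ under a single contact transformation, but to assemble a homotopy-coherent diagram indexed by the (highly connected) parameter space one must know these equivalences compose associatively up to all higher homotopies, and behave well in families. I would handle this by appealing to the more structured construction of \cite{shende-microlocal, nadler-shende}, where $\mu sh$ is built as a genuine sheaf of categories functorial for open embeddings and the contact-transformation functoriality is upgraded to the $\infty$-categorical level; the contractibility of $\mathcal{E}$ (with its polarization decoration) then does the rest, as the homotopy limit of any diagram of equivalences over a contractible space is canonically equivalent to any one of its values. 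A secondary, more routine point is checking that the thickening construction of Lemma \ref{thickening} is compatible with stabilization $\R^{2N+1}\hookrightarrow \R^{2N+3}$, which amounts to the observation that $\mu sh$ is insensitive to adding a cotangent-fiber factor, exactly the content of that lemma.
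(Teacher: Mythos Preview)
The paper does not prove this theorem; it is cited from \cite{shende-microlocal}, and the surrounding paragraphs merely sketch the argument. Your proposal faithfully expands that sketch: thicken via Lemma~\ref{thickening}, use Gromov's $h$-principle to obtain an (eventually) contractible space of contact embeddings, and pass to a limit so that only the stable normal polarization survives as a parameter. This is exactly the approach indicated in the text preceding the theorem, so there is nothing substantive to compare.
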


Above, the sign $-$ on $-\eta$ can be regarded as just a notational choice.  To give it an actual meaning, 
note that $BO \to BU$ is a morphism of spectra, so given a stable symplectic vector bundle $E$ 
on some topological space $X$, 
say classified by some map $E: X \to BU$, and a polarization of $E$, i.e. lift to some $F: X \to BO$, 
then $-F$ gives a polarization of $-E$, where $-$ is the pullback by the canonical `inverse' involution 
on $BO$ or $BU$.

In fact \cite[Sec.\ 10.2]{nadler-shende}, it is always possible to build a contact manifold, $V^{T^*LGr(\xi)}$, which is a 
bundle over $V$ with fibers the cotangent bundles to the Lagrangian Grassmannians
of the contact distribution $\xi$.  The fiberwise zero section is the Lagrangian Grassmannian
bundle $V^{LGr(\xi)}$.  
The virtue of $V^{T^*LGr(\xi)}$ is that it has a canonical polarization of  the contact distribution, hence a canonical stable
normal polarization.
Thus we define $\mu sh_{V^{T^*LGr(\xi)}}$, and, restricting supports, $\mu sh_{V^{LGr(\xi)}}$.
It is evident from the construction that $\mu sh_{V^{LGr(\xi)}}$ is locally constant in the Lagrangian Grassmannian direction.
We may also stabilize $\xi \to \xi \oplus T^*\R^n$; taking $n \to \infty$, we have a sheaf of categories $\mu sh_{V^{LGr}}$ on the (stable) Lagrangian Grassmannian
bundle $V^{LGr}$, locally constant along the Lagrangian Grassmannian direction. 

Let us explain how this recovers the previous notion. 
A polarization $\rho$ of the contact distribution is (by definition) a 
section $\rho: V \to V^{LGr(\xi)}$.  Now, the normal direction of a neighborhood to $\rho(V) \subset V^{T^*LGr(\xi)}$  can be identified with $T^*LGr(\xi)$. This carries the canonical 
polarization by the cotangent fiber.  We may combine this with 
canonical polarization of the symplectic stable normal bundle of 
$V^{T^*LGr(\xi)}$ to obtain a polarization of the symplectic stable normal bundle of $V$. 
It's an exercise 
to see that this polarization is canonically identified with $-\rho$.  
We conclude:

\begin{lemma} \label{tangent normal polarization comparison} 
For $\rho$ a polarization of the contact distribution, 
there is a canonical isomorphism $\rho^* \mu sh_{V^{LGr(\xi)}} = \mu sh_{V, \rho}$. 
\end{lemma}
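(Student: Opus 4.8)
The plan is to unwind the definitions of the two sheaves of categories being compared and exhibit the identification as a tautology modulo a single geometric fact about polarizations, which is the crux of the argument. Recall that the right-hand side $\mu sh_{V, \rho}$ is, by \Cref{normally polarized contact microsheaves}, defined from an embedding $V \hookrightarrow S^*M$ together with the stable normal polarization induced by $\rho$ (under the dictionary $\eta \leftrightarrow -\eta$). The left-hand side $\rho^* \mu sh_{V^{LGr(\xi)}}$ is, by construction, the pullback along the section $\rho: V \to V^{LGr(\xi)} \subset V^{T^*LGr(\xi)}$ of the canonically normally polarized sheaf $\mu sh_{V^{T^*LGr(\xi)}}$, restricted to supports in the zero section.

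First I would observe that $V^{T^*LGr(\xi)}$, being built with its own canonical polarization of the contact distribution (the cotangent-fiber polarization), carries a canonical stable normal polarization, so $\mu sh_{V^{T^*LGr(\xi)}}$ is literally an instance of the construction in \Cref{normally polarized contact microsheaves} for that ambient contact manifold. Then I would use the behaviour of \Cref{normally polarized contact microsheaves} under codimension-zero contact embeddings together with \Cref{thickening}: pulling back $\mu sh_{V^{T^*LGr(\xi)}}$ along the section $\rho$, whose image $\rho(V)$ sits inside $V^{T^*LGr(\xi)}$ with symplectic normal bundle $\rho^*(T^*LGr(\xi))$ equipped with its cotangent-fiber (hence totally real) polarization, produces a sheaf of categories on $V$ that is the instance of \Cref{normally polarized contact microsheaves} for the embedding $V \hookrightarrow V^{T^*LGr(\xi)} \hookrightarrow S^*M$ (for suitable $M$), with stable normal polarization the direct sum of the cotangent-fiber polarization of $T^*LGr(\xi)$ and (the pullback of) the canonical stable normal polarization of $V^{T^*LGr(\xi)}$.

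The main obstacle — and the only nonformal point — is identifying this assembled stable normal polarization of $V$ with $-\rho$ (equivalently, matching the sign conventions so that $\rho^*\mu sh_{V^{LGr(\xi)}}$ receives the normal polarization corresponding to $\mu sh_{V,\rho}$). This is exactly the ``exercise'' flagged in the discussion preceding the lemma: one must check that combining the cotangent-fiber polarization of the normal bundle to $\rho(V) \subset V^{T^*LGr(\xi)}$ with the canonical stable normal polarization of $V^{T^*LGr(\xi)}$ yields a stable polarization of the symplectic stable normal bundle of $V \subset S^*M$ that is canonically homotopic to $-\rho$. I would carry this out at the level of classifying maps: $\rho$ is a lift $V \to BO$ of the map $V \to BU$ classifying $\xi$; the normal bundle $T^*LGr(\xi)$ contributes the tautological totally real subbundle over $BO(\,\cdot\,)$-style Grassmannian fibers, and the fiber sequence $O \to U \to U/O$ together with the construction of $V^{T^*LGr(\xi)}$ from \cite[Sec.\ 10.2]{nadler-shende} forces the total normal polarization to be the complement, which is precisely the pullback of $\rho$ under the inverse involution on $BO$. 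Once this homotopy is in hand, the identification $\rho^* \mu sh_{V^{LGr(\xi)}} = \mu sh_{V,\rho}$ follows from the canonicity (uniqueness up to canonical equivalence) of the construction in \Cref{normally polarized contact microsheaves} as a function of the stable normal polarization. I expect the stable-homotopy bookkeeping of this sign to be the genuinely delicate step; everything else is a matter of transporting definitions through \Cref{thickening} and \Cref{normally polarized contact microsheaves}.
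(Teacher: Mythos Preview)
Your proposal is correct and follows essentially the same approach as the paper: embed $V$ via $V^{T^*LGr(\xi)}$, identify $\rho^* \mu sh_{V^{LGr(\xi)}}$ as an instance of the thickening construction of \Cref{normally polarized contact microsheaves} for $V$ with a specific composite normal polarization, and then invoke the ``exercise'' preceding the lemma to identify that polarization with $-\rho$. The paper's proof is terser but makes exactly this argument, observing that the same two-step thickening (first along $V^{LGr(\xi)} \subset V^{T^*LGr(\xi)}$, then along the canonical normal polarization of $V^{T^*LGr(\xi)}$) simultaneously computes both sides.
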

\begin{proof}
We may choose the embedding of $V$ by first embedding $V^{LGr(\xi)}$.  
To obtain $\mu sh_{V, \rho}$, we must then thicken $V$ along a polarization of its normal bundle which stabilizes to $-\rho$. 
We obtain such by thickening $V$ along $V^{LGr(\xi)} \subset V^{T^*LGr(\xi)}$ and then along the canonical polarization
of the normal bundle to $V^{T^*LGr(\xi)}$.  But the same procedure defines $\mu sh_{V^{LGr(\xi)}}$. 
\end{proof} 

 In case $V = S^*M$, there is a polarization $\phi$ given by the cotangent fiber.
 Now we have two notions of $\mu sh_{S^* M}$: the original given by \eqref{ks mush}, and then the construction of \Cref{normally polarized contact microsheaves}. 

\begin{lemma} \label{stupid normal polarization} 
Let $M$ be a manifold, and $\nu$ the stable normal bundle of $M$.  Then the stable symplectic
normal bundle to $S^*M$ is $\nu \oplus \nu^*$, and the stable normal polarization by 
$\nu$ is canonically identified with $-\phi$.  
\end{lemma}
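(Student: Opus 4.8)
The plan is to extract the normal bundle and both polarizations from a single embedding. Fix a smooth embedding $M \hookrightarrow \R^N$ together with a Riemannian metric on $\R^N$; letting $N \to \infty$, such data is unique up to contractible choice, so the constructions below stabilize canonically. It provides a tubular neighborhood $E$ of $M$ identified with the total space of the normal bundle $\nu$ of $M$ in $\R^N$; the splitting $T\R^N|_M = TM \oplus \nu$ and its dual $T^*\R^N|_M = T^*M \oplus \nu^*$; a canonical trivialization $TM \oplus \nu \cong \R^N$; and metric identifications $TM \cong T^*M$ and $\nu \cong \nu^*$.

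First I would compute the symplectic normal bundle. The assignment $\xi_x \mapsto (\xi_x, 0) \in T^*_x M \oplus \nu^*_x = T^*_x \R^N$ defines an $\R_+$-equivariant map $T^*M \to T^*\R^N$ pulling back the canonical $1$-form of $T^*\R^N$ to that of $T^*M$; it is thus a symplectic embedding, and descends to a contact embedding $S^*M \hookrightarrow S^*\R^N$. In Darboux coordinates $(q,z,p,\zeta)$ on $T^*\R^N$ adapted to $M = \{z = 0\}$, the image of $T^*M$ is $\{z = \zeta = 0\}$, whose tangent bundle $\langle \partial_q, \partial_p \rangle$ has symplectic orthogonal $\langle \partial_z, \partial_\zeta \rangle$; passing to cosphere bundles (quotienting the radial direction, which is immaterial for stable data) identifies the symplectic normal bundle of $S^*M \subset S^*\R^N$ with $\pi^*\big(\langle \partial_z \rangle \oplus \langle \partial_\zeta \rangle\big) = \pi^*(\nu \oplus \nu^*)$, where $\pi : S^*M \to M$. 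Letting $N \to \infty$ yields the first assertion. The Lagrangian subbundle $\pi^*\langle \partial_z\rangle = \pi^*\nu$ represents the normal polarization ``by $\nu$'' of the statement; it is the polarization along which one thickens $S^*M$ (in the normal directions of the tubular neighborhood $E$) in the construction of \Cref{normally polarized contact microsheaves}.

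To identify this polarization with $-\phi$, I would restrict the canonical polarization of the ambient. Since $S^*\R^N \cong \R^N \times S^{N-1}$ and $TS^{N-1} \oplus \R \cong \R^N$ canonically, the contact distribution $\xi_{S^*\R^N}$ is canonically stably trivial, and its cotangent-fiber polarization $\phi_{\R^N}$ — whose underlying bundle is the (canonically trivial) cotangent fiber $(\R^N)^*$ of $T^*\R^N$ — represents the trivial polarization, i.e.\ the constant lift to $BO$ in the sense of \Cref{standard structures}. Restricting to $S^*M$, the splitting $\xi_{S^*\R^N}|_{S^*M} = \xi_{S^*M} \oplus (\nu \oplus \nu^*)$ carries $\phi_{\R^N}|_{S^*M}$ to $\phi \oplus \nu^*$, with $\phi$ the cotangent fiber of $T^*M$. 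The canonical stable triviality of $\xi_{S^*\R^N}$ identifies the normal bundle $\nu \oplus \nu^*$ with $-\xi_{S^*M}$, and since $BO \to BU$ is a map of spectra negation is compatible with it; hence the fact that the trivial polarization splits as $\phi \oplus \nu^*$ exhibits the normal polarization $\nu^*$ as $-\phi$. Finally the metric identification $\nu \cong \nu^*$ — equivalently, the direct computation $-\phi = -[T^*M] = -[TM] = [\nu]$ using the trivialization $TM \oplus \nu \cong \R^N$ — gives the normal polarization by $\nu$ itself, as claimed.

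I expect the geometry to be routine: the Darboux computation of the normal bundle is immediate, and every step is manifestly stable under $N \to \infty$. The real work is the bookkeeping of canonical identifications — the negation involutions on $BO$ and $BU$, the metric identifications $TM \cong T^*M$ and $\nu \cong \nu^*$, and the tubular-neighborhood trivialization $TM \oplus \nu \cong \R^N$ — and in particular pinning down the symplectic-sign conventions so that the tubular thickening yields $-\phi$ rather than $+\phi$.
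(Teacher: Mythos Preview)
The paper states this lemma without proof; the embedding $M \hookrightarrow \R^N$ and the computation of the symplectic normal bundle as $\nu \oplus \nu^*$ appear verbatim in the proof of the subsequent \Cref{mush mush comparison}, which then simply invokes the present lemma for the identification $-\nu = \phi$. Your argument is thus filling a genuine gap, and the approach you take --- restricting the ambient fiber polarization $\phi_{\R^N}$ of $S^*\R^N$, which is stably trivial, and reading off the splitting $\phi_{\R^N}|_{S^*M} = \phi \oplus \nu^*$ --- is the natural one and is correct.

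One small point worth making explicit: the passage from $\nu^*$ to $\nu$ as Lagrangian subbundles of $\nu \oplus \nu^*$ is slightly more than the bare isomorphism $\nu \cong \nu^*$ of real bundles. You need a homotopy of Lagrangian subbundles, which a metric $g$ provides via the rotation $L_t = \{(\cos t\, v, \sin t\, g v)\}$; the symmetry of $g$ is exactly what makes each $L_t$ isotropic. This is a contractible choice, so the identification is canonical in the relevant sense, but it is the step where ``metric identification $\nu \cong \nu^*$'' is doing actual symplectic work rather than just real-bundle bookkeeping.
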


\begin{proposition}
  \label{prop: mush mush comparison}
There is a canonical equivalence of categories $\mu sh_{S^*M} \simeq \mu sh_{S^*M, \phi}$, where the LHS is defined by 
\eqref{ks mush} and the right hand side by Theorem \ref{normally polarized contact microsheaves}.   
\end{proposition}
\begin{proof}
Said differently, we officially define $\mu sh$ by embedding into $J^1 \R^n \subset S^* \R^{n+1}$, but
the construction makes sense for any embedding into any cosphere bundle, e.g. the embedding of $S^*M$ into itself. 
We should check these give the same result. 

So embed $i: M \hookrightarrow \R^n$; we denote the normal bundle $\nu$; note this is a representative of the stable normal
bundle.  Let $\lambda_{T^*\R^n}$ and 
$\lambda_{T^*M}$ be the canonical one forms on cotangent bundles. 
Any splitting $\sigma$ of the vector bundle map
$T^* \R^n|_M \twoheadrightarrow T^*M$ will satisfy $\sigma^* \lambda_{T^*\R^n} = \lambda_{T^*M}$, hence
define a contact embedding $\sigma: S^*M \hookrightarrow S^* \R^n$. 
The symplectic normal bundle to $\sigma$ is the restriction of $T^*_{T^*M} T^*\R^n = \nu \oplus \nu^*$.
Let $S^*M(\nu)$ be a thickening of $\sigma(S^*M)$ in the direction $\nu$. 
By definition, $\mu sh_{S^*M, -\nu} = \sigma^* \mu sh_{S^*M(\nu)}$.  
\Cref{stupid normal polarization} gives $\phi = -\nu$. 
However, we've seen by the projectivized version of \Cref{lem: msh-on-sub} that $i_*: sh(M) \to sh(\R^n)$ microlocalizes to define an isomorphism 
$i_* :\mu sh_{S^*M} \xrightarrow{\sim} \sigma^* \mu sh_{S^*M(\nu)}$.
\end{proof} 

Consider now a Legendrian $L\subset V$.  Locally near $L$, we may choose a Weinstein neighborhood 
$D^* L \hookrightarrow V$, on which we may consider the polarization $\phi_L$ by cotangent fibers.  Note that if $V = S^*M$, the polarization $\phi_L$ has nothing to do with the fiber polarization of $S^*M$. 

\begin{lemma} \label{lem: polarization-local-systems} 
One can uniformly fix trivializations $\mu sh_{L, \phi_L} \simeq loc_L$. 
\end{lemma}

\begin{proof}
We may assume without loss of generality that $V=T^*L$. To compute $\mu sh_{L, \phi_L}$, we note that the jet bundle $J^1 L$ can be viewed as an open subset of the cosphere bundle $S^*(L \times \R)$ by the contact embedding $J^1 L \hookrightarrow S^* (L \times \R), (x,\xi,t) \mapsto (x,t,[\xi,1]).$ Since $\R$ is contractible, $\phi_L$ on $J^1 L$ is the same as the restriction of $\phi_{L \times \R}$ on $S^*(L \times \R)$ along this embedding. Thus, we have the identification between sheaves of categories $\mu sh_{L, \phi_L} = \mu sh_{L \times T^*_{0,>} \R, \phi_{L \times \R} }$ on $S^*(L \times \R)$.  By \Cref{prop: mush mush comparison}, this realizes $\mu sh_{L,\phi_L}$ as $\mu sh_{S^*(L \times \R); 0_L \times T_0^* \R}$ in the classical sense recalled in \eqref{ks mush}. But the map
\begin{align*}
    loc_L &\xrightarrow{\sim} \mu sh_{S^*(L \times \R); 0_L \times T_0^* \R}\\
    l &\mapsto l \boxtimes 1_{[0,\infty)}
\end{align*}
 identifies it with local systems on $L$.
 \end{proof}

In fact, we have made a choice in the above Lemma --  the space of trivializations for a given $L$ is a $\mathrm{H}^0(L, \Z)$ torsor.  Our trivialization above is uniform in the sense that it is compatible with open embeddings and, in an appropriate sense, with stabilization.  Even still, the space of uniform trivializations choices  is still naturally only a $\Z$-torsor, of which we have chosen some particular element.  For a discussion of this point, see \cite[Remark 4.32]{Gammage-Shende-large-volume}.

Before we leave this section, we mention common tools that are frequently used in the study of microsheaves. First, the notion of microsheaves is invariant under contact transform, which means the following:

\begin{theorem}
Let $\cU \subseteq S^* M$, $\cV \subseteq S^* M$ be open subsets and $\chi: \cU \xrightarrow{\sim} \cV$ be contactomorphism.     
\end{theorem}

\subsection{Maslov data} \label{sec: mush}
Because $\mu sh_{V^{LGr}}$ 
is locally constant along the Lagrangian Grassmannian direction, one may expect that its descendability from $V^{LGr}$ to $V$ depends only on the `monodromy' in this direction.  
Indeed this is the case, as was established in \cite{nadler-shende}; we will recall the setup here. Recall that for a symmetric monoidal category $\cC$, the group of invertible objects is denoted $\Pic(\cC)$.  

\begin{theorem}[{\cite[Sec. 11]{nadler-shende}}] \label{mush} There is a map of infinite loop spaces
$\mu_{\cC} : LGr \to B\Pic(\cC)$ such that the sheaf of categories $\mu sh_{V^{LGr}}$ descends to the $B\Pic(\cC)$ bundle over $V$ classified by
the map 
$$V \xrightarrow{\xi} BU \to BLGr \xrightarrow{B \mu_{\cC}} B^2 \Pic(\cC) $$
\end{theorem}

\begin{definition} \label{def:maslov}
By $\cC$-Maslov data for $V$, we mean a null-homotopy of the map $V \to B^2 Pic(\cC)$. 
By a $\cC$-grading, we mean a null-homotopy of $V \xrightarrow{B \mu_{\cC}} B^2 \Pic(\cC) \to B^2 \pi_0 \Pic(\cC)$. We refer to the space of $\cC$-Maslov data lifting a given $\cC$-grading as $\cC$-orientation data.  
\end{definition} 

In the case when $\cC = R -mod$ for a commutative ring (spectrum) $R$, we often simplify the notation by writing $R$ in place of $R-mod$, e.g. $\mu_{R} := \mu_{R-mod}$,  $\Pic(R) \coloneqq \Pic(R -mod)$, etc. 


A polarization $\rho$ provides a null-homotopy of the map $V \xrightarrow{\xi} BU \to BLGr$, so 
$B\mu_C \circ \rho$ is $\cC$-Maslov data.  Lemma \ref{tangent normal polarization comparison} implies that
$\mu sh_{V, \rho} = \mu sh_{V, B\mu_C \circ \rho}$, where the left hand side is defined as in 
Theorem \ref{normally polarized contact microsheaves} and the right hand side is understood
in the sense above.   For this reason, given a polarization $\rho$ we will also just write $\rho$ for the Maslov
data $B\mu_C \circ \rho$.

\begin{definition} \label{def: microsheaves}
For $V$ a contact manifold, and a $\cC$-Maslov datum $\eta$ for $V$, we write 
write $\mu sh_{V, \eta}$ for the sheaf of categories on $V$  
obtained by the pullback along the zero section of the $B\Pic(\cC)$ bundle, trivialized by $\eta$, to which \Cref{mush}
asserts that $\mu sh_{V^{LGr}}$ descends.  

For a subset $X \subset V$ 
we write $\mu sh_{X, \eta} \subset \mu sh_{V, \eta}$ for the sheaf of full subcategories on objects supported in $X$.  For an exact (real) symplectic manifold $(W, \omega = d\lambda)$, we always use implicitly the embedding in the contactization $W = W \times \{0\} \subset W \times \mathbb{R}$, and hence  write $\mu sh_{W, \eta}:= \mu sh_{W \times \{0\}, \eta}|_W$. 
\end{definition}

Thus $\mu sh_{V, \,\cdot\,}$ is a map from the space $Mas(V)$ of Maslov data for $V$ to the category $sh(V, \cC-cat)$ of sheaves of $\cC$-linear categories on $V$.  In particular, a homotopy of $\cC$-Maslov data $h: \mu \sim \nu$ induces an equivalence of sheaves of categories $\psi(h): \mu sh_{V, \mu} \cong \mu sh_{V, \nu}$, and a homotopy $g: h_1 \approx h_2$ induces an invertible natural transformation between the equivalences $\psi_{h_1}, \psi_{h_2}: \mu sh_{V, \mu} \cong \mu sh_{V, \nu}$. Taking based loops at some Maslov datum $\eta$, we get a map $\Omega_\eta Mas(V) \to Aut_C(\mu sh_{V, \eta})$.  Now, the space of Maslov data is a torsor for $\Map(V, B\Pic(\cC))$; if this is nonempty, it follows that
$\Omega_\eta Mas(V) = \Map(V, Pic(\cC))$.  It follows from the construction in \cite[Definition 11.18]{nadler-shende} that the map
$\Omega_\eta Mas(V) \to Aut(\mu sh_{V, \eta})$
is the natural map $\Map(V, Pic(\cC)) \to Aut_C(\mu sh_{V, \eta})$. In particular, as $\Map(V, Pic(\cC))$ classifies invertible local system on $V$, any homotopy $g: h_1 \approx h_2$ as above corresponds to some invertible $l(g) \in loc(V)$ and the equivalences are related by
\[ \psi_{h_2}(-)= l(g) \otimes \psi_{h_1}(-): \msh_{V,\mu} \cong \msh_{V,\nu}. \]

\begin{corollary} \label{cor: noncanonical}
     Let $V$ be a contractible contact manifold, and $\mu, \nu$ any two choices of $\cC$-Maslov data for $V$ inducing the same $\mathcal{C}$-grading.  Then there is an isomorphism, canonical up to non-canonical natural transformation,  $\mu sh_{V, \mu} \cong \mu sh_{V, \nu}$ 
\end{corollary}
\begin{proof}
    Similar to Maslov data, the space of gradings is a torsor for $\Map \left(V, \tau_{\leq 1} B\Pic(\cC) \right)$; 
    here the hypothesis of `inducing the same $C$ grading' should be understood as meaning that we are given a  choice of path between the gradings associated to the given Maslov data.  Since $V$ is assumed contractible, we may lift the path to a path of Maslov data and obtain the desired isomorphism.  Two different paths differ by an loop which is trivial in $\Map(V, \pi_0(Pic(\cC)))$ hence by an automorphism which is the identity in 
    $\pi_0(Aut_C(\mu sh_{V, \eta}))$.
\end{proof}

%

In the discussion thus far, we have been agnostic as far as the choice of the category $\cC$, and we have also not needed
to compute the map $\mu_\mathcal{C}: U/O \to B\Pic(\cC)$.  We now turn to this question. 
Note first that given a map of symmetric monoidal stable categories $\cC \to \cD$, it follows from the construction 
that $\mu_{\cD}$ is the composition of $\mu_{\cC}$ with the natural map $Pic(\cC) \to Pic(\cD)$.  
In particular, when $R$ is a discrete commutative ring ($R = \pi_0(R)$), the map $\mu_{R-mod}$ factors through $\mu_{\Z}$.  
The map $\tau_{\le 0} \Omega \mu_{\Z}:\Omega LGr \to \Z$ was shown to be the Maslov index by Kashiwara and Schapira \cite[Thm. 7.5.11]{kashiwara-schapira}, and was later fully characterized by Guillermou \cite{guillermou}.  
More generally, any symmetric monoidal stable category $C$ admits a symmetric monoidal functor from the category of spectra (aka modules over the sphere spectrum $\S$).   The map $\mu_{\S}: \Omega LGr \to \Pic(\S)$ was shown by Jin
\cite{jin-J} to agree with the J-homomorphism.
By truncation one recovers Guillermou's result in a more convenient (for us) formulation. 

\begin{theorem}{\cite{guillermou, jin-J}} \label{thm: guillermou-maslov} 
The map $\Omega \mu_{\Z}: \Omega LGr \to \Pic(\Z)$ is the following composition: 
$$\Omega LGr \to  \tau_{\le 1}(\Omega LGr) \xrightarrow{\tau_{\le 1}(J)} \tau_{\le 1} \Pic(\S) = \Pic(\Z).$$
\end{theorem}

It follows in particular that $\Z$ Maslov data is precisely grading/orientation data, and hence that grading/orientation data provide $R$ Maslov data for any 
commutative ring $R$ (although  not all $R$ Maslov data need arise in this way). 


\subsection{Secondary Maslov data} \label{secondary maslov data}   
If $L \subset V$ is a Lagrangian (resp. Legendrian) in a symplectic (resp. contact) manifold, as in \Cref{secondary Z-Maslov}, the Weinstein neighborhood theorem provides a polarization $\phi_L$ near $L$. Assume further that $V$ is equipped with a Maslov datum $\eta$.  Recall from \cite{nadler-shende} that we say a homotopy $\phi_L \sim \eta|_L$ is a secondary Maslov datum for $L$.   

As noted in \cite[Remark 11.20]{nadler-shende}, a choice of secondary Maslov data identifies microsheaves on $L$ with local systems.  
Indeed, a secondary Maslov datum induces an equivalence $\mu sh_{L, \eta} \simeq \mu sh_{L, \phi_L}$ which we can further compose with the equivalence $\mu sh_{L, \phi_L} \simeq loc_L$ from \Cref{lem: polarization-local-systems}. Taking stalks at a smooth point $p \in L$, we obtain:  
\begin{corollary}\label{corollary:microstalk-secondarymaslov}  
Fix a contact manifold (resp.\ exact symplectic) $V$ and a Legendrian (resp.\ conical Lagrangian) $L \subset V$, and $p \in L$. Fix a Maslov datum on $V$ and a secondary Maslov datum on $L$. Then is an equivalence
\begin{equation}\label{equation:microstalk-secondary}
    \mathcal{C} \xrightarrow{\sim} (\mu sh_L)|_p.
\end{equation}
Any choice of such an isomorphism is termed a microstalk functor. 
\end{corollary}

The microstalk functor \eqref{equation:microstalk-secondary} depends on the choice of secondary Maslov datum $\eta \sim \phi_L$, and the ambiguity is a torsor for $\Pic(\cC)$. However, if $L$ is endowed with the additional datum of a \emph{secondary $\cC$-grading}, we can cut down the ambiguity to a smaller group.

To explain this, let $\eta, \eta'$ be $\cC$-Maslov data on a contact manifold $V$ (henceforth we restrict our attention to the contact case, leaving the symplectic analogue to the reader). As noted in the proof of \Cref{spin polarization}, a homotopy of Maslov data $\eta \sim \eta'$ is the same thing as null-homotopy of the difference $[\eta, \eta']: V \to \Omega B^2 Pic(\cC)$. Similarly, if $\eta, \eta'$ are $\cC$-gradings (\Cref{def:maslov}), then a homotopy of $\cC$-gradings is a homotopy of the corresponding map $V \to \Omega B^2\pi_0 \Pic(\cC)$. 

Suppose now that $L \subset V$ is a Legendrian, and let $\phi_L$ denote the (Maslov datum induced by the) canonical fiber polarization near $L$. Consider the difference $[\eta, \phi_L]: Op(L) \to \Omega B^2\Pic(\cC)$ and we let $\overline{[\eta, \phi_L]}: Op(L) \to \Omega B^2\pi_0 \Pic(\cC)$ denote the composition of $[\eta, \phi_L]$ with the natural map $\Omega B^2 Pic(\cC) \to \Omega B^2 \pi_0 Pic(\cC)$. A secondary $\cC$-grading $h$ shall mean a null-homotopy of $\overline{[\eta, \phi_L]}.$

\begin{definition}
    Given $V, L, h$ as above, a secondary $\cC$-orientation datum for $L$ is a is a null-homotopy of $[\eta, \phi_L]: Op(L) \to \Omega B^2\pi_0 \Pic(\cC)$ lifting the null-homotopy $h$.
\end{definition}

The difference between any secondary $\cC$-orientation data lifting $h$ is a map $V \to \Omega^2 B^2 \Pic(\cC)= \Pic(\cC)$ such that the composition $V \to \Omega^2 B^2 \Pic(\cC) \to \Omega^2 B^2 \pi_0 \Pic(\cC)$ is null, i.e.\ a map $V \to \Omega^2B^2 \Pic_0(\cC)= \Pic_0(\cC)$. 
\begin{corollary}\label{corollary:microstalk-first}
    In the situation of \Cref{corollary:microstalk-secondarymaslov}, assume given a secondary $\cC$-grading $h$ on $L$. Given two choices of secondary orientation data lifting $h$, the corresponding microstalk functors \eqref{equation:microstalk-secondary} are a torsor for $\Pic_0(\cC)$. 
    
    Since the space of secondary orientation data is certainly nonempty when $L$ is contractible, \eqref{equation:microstalk-secondary} is canonical up to an action of $\Pic_0(\cC)$ by natural transformation. In particular, \eqref{equation:microstalk-secondary} is unambiguous at the level of \emph{objects}.
\end{corollary}

\subsection{Constrained Maslov data} \label{sec: contrained}  
In this subsection, we define a notion of \emph{constrained} Maslov data. The purpose of the discussion here is to have a framework for perverse t-structures that come from exotic t-structures on the coefficient category $\cC$ that are different from the standard one one $R -mod$ as considered in \Cref{def: exotic-microperverse-t-structure}. None of this material is needed for proving the main results of this paper stated in the introduction and readers who care primarily about the canonical t-structure can safely skip it.

Let $\{\cD_i\}$ be a collection of subcategories $\cD_i \subseteq \cC$. One denote by \[\Pic(\cC)_{\{\cD_i\}} \coloneqq \{x \in \Pic(\cC)| x \otimes y_i \in D_i, \ \forall y_i \in D_i \forall i \}  \] the submonoid of $\Pic(\cC)$ which preserves 
each $\cD_i$. When the collection consists of only one subcategory $\cD$, we simplify the notation and denote it by $\Pic(\cC)_\cD$.

Beware that in general, elements of $\Pic(\cC)_{\{\cD_i\}}$ need not induce an automorphism on each $\mathcal{D}_i$. 

\begin{example} \label{eg: only-monoid}
Suppose $\cC = R -mod$ for a discrete ring $R$ and $\cC^{\geq 0}$ is the subcategory of objects supported in non-negative degrees. Then tensoring with $R[-1] \otimes M = M[-1]$ has the effect of shifting the cohomology degree up by one so $R[-1] \in \Pic(R)_{\cC^{\geq 0}}$. But clearly, its inverse $R[1] \not \in \Pic(R)_{\cC^{\geq 0}}$ so $\Pic(R)_{\cC^{\geq 0}}$ is only a monoid instead of a group.   
\end{example}

\begin{definition} 
    We say  $\{\cD_i\}$ is an anchored collection if $\Pic(\cC)_{\{\cD_i\}}$ is a subgroup. 
\end{definition}

Our reason for considering anchored collections is that the situation encountered in \Cref{eg: only-monoid} (where the functor induced by tensoring with $R[-1]$ fixes $\{\cC^{\geq 0}\}$ but is not surjective) cannot happen:

\begin{lemma} \label{lem: anchored-surjection}
Let $\{\cD_i\}$ be an anchored collection. Then, for any $x \in \Pic(\cC)_{\{\cD_i\}}$, the functor $x \otimes (-): \cC \xrightarrow{\sim} \cC$ restricts to an equivalence $x \otimes (-): \cD_i \xhookrightarrow{\sim} \cD_i$.    
\end{lemma}

\begin{proof}
Fully-faithfulness is automatic so we show that if $x \in \Pic(\cC)_{\{\cD_i\}}$, then for any $a \in \cD_i$ there exists $b \in \cD_i$ such that $x \otimes b = a$ . But we know $b$ exists in $\cC$, and it must satisfy $b = x^{-1} \otimes a$. The fact that $b \in \cD_i$ then follows from the anchored assumption since $x^{-1}$ is also in $\Pic(\cC)_{\cD_i}$.
\end{proof}

From now on, we always assume $\{ \cD_i\}$ to be anchored. Clearly, the identity component $\Pic(\cC)_0 \subseteq \Pic(\cC)_{\{\cD_i\}}$ is contained in any such subgroup. Thus, the cofiber $\Pic(\cC)_{\{\cD_i\}} \rightarrow \Pic(\cC)$ is a discrete group, as it is a quotient of $\pi_0(\Pic(\cC))$. We denote it by  $\pi_0 \left(\Pic(\cC); \{ \cD_i\}\right)$.

\begin{definition} \label{def: secondary-orientation}
A $(\cC,\{\cD_i\})$-grading is a null-homotopy of the map 
\[V \rightarrow B^2 \Pic(\cC) \rightarrow \pi_0 \left(\Pic(\cC); \{ \cD_i\}\right).\] 
A $(\cC,\{\cD_i\})$-orientation of a given $(\cC,\{\cD_i\})$-grading is a lift to a $\cC$-Maslov datum.
\end{definition}

\begin{example}\label{eg: coherent-coefficient}
Let $\cC = \Coh(\P^1)$ be the category of coherent sheaves on $\P^1$. In this case, invertible objects are of the form $\cO(n)[m]$ for $n, m \in \Z$, so $\pi_0(\Pic(\P^1)) = \Z \times \Z$. Let $\mathcal{D}_1$ be the full subcategory whose objects are honest coherent sheaves (viewed as complexes supported in degree zero). Then the group $\Pic(\P^1)_{\{\mathcal{D}_1\}}$ contains only $\cO(n)$ for $n \in \Z$. Thus $\pi_0 \left(\Pic(\cC); \{ \cD_1\}\right) = \Z$ only remembers the homological degree shift.
\end{example}

As illustrated in the diagram below, a usual $C$-grading induces a $(\cC,\{\cD_i\})$-grading, for which a $\cC$-orientation induces a $(\cC,\{\cD_i\})$-orientation. However, the latter is more general. 

\begin{equation}\label{equation:chris-big-diagram}
\begin{tikzcd}[column sep=large, row sep=large]
& B^2\Pic(\mathbb{S})_0 \arrow[r] \arrow[d] 
& B^2\Pic(\mathcal{C})_0 \arrow[r] \arrow[d] 
& B^2\Pic(\mathcal{C})_{\{\mathcal{D}_i\}} \arrow[d] \\[0.6em]
BU \arrow[r] \arrow[ur, dashed] 
& B^2\Pic(\mathbb{S}) \arrow[r] \arrow[d] 
& B^2\Pic(\mathcal{C}) \arrow[r, equals] \arrow[d] 
& B^2\Pic(\mathcal{C}) \arrow[d] \\[0.6em]
& B^2\mathbb{Z} \arrow[r] 
& B^2\!\left(\pi_0\Pic(\mathcal{C})\right) \arrow[r] 
& B^2\!\left(\pi_0 \big(\Pic(\mathcal{C});\{\mathcal{D}_i\}\big)\right)
\end{tikzcd}
\end{equation}

We can give ``constrained'' analogues of the constructions in \Cref{secondary maslov data}. Namely, suppose now that $L \subset V$ is a Legendrian. If $\{\cD_i\}$ is a collection of subcategories, as in \Cref{def: secondary-orientation}, a \emph{secondary orientation on $L$ constrained by $\{\cD_i\}$} is a homotopy between the polarization $(\cC,\{\cD_i\})$-grading and the $(\cC,\{\cD_i\})$-grading induced from $\eta$.

A secondary Maslov datum for $L$ lifting a given secondary polarization constrained by $\{\cD_i\}$ shall be called a \emph{secondary orientation datum for $L$ constrained by $\{\cD_i\}$}.

\begin{corollary}[cf.\ \Cref{corollary:microstalk-first}] \label{corollary: reduced-ambiguity}
Let $\eta$ be a Maslov datum and $L$ be an Legendrian endowed with a secondary grading constrained by $\{\mathcal{D}_i\}$. Then the ambiguity of the equivalence 
$\cC \cong (\mu sh_{L,\eta})_p $ from \Cref{corollary:microstalk-secondarymaslov} can be reduced to $\Pic(\cC)_{\{\cD_i\}}$. 
\end{corollary}
In particular, per \Cref{corollary: reduced-ambiguity}, the statement that ``the microstalk of $\cF \in (\mu sh_{L, \eta})|_p$ is contained in $\cD_i$" is well-defined.

\begin{example}
Suppose that $(\cC, \cD)$ are as in \Cref{eg: coherent-coefficient}. Then the microstalk of  $\cF \in (\mu sh_{L, \eta})|_p$ is an object in $\Coh(\P^1)$ which is well-defined up to tensoring with $\cO(n)$. Hence, it is meaningful to ask whether the microstalk of $\cF$ belongs to the heart $\Coh(\P^1)^\heartsuit$ (i.e.\ whether it is represented by an honest coherent sheaf).
\end{example}

\section{Microsheaves in the complex setting} \label{complex microsheaves} 

\subsection{Microsheaves on complex cotangent bundles} \label{sec: complex-cotangent-caes}

We now review the results of Waschkies \cite{waschkies-microperverse}. Denote by $\pi: T^\circ M \rightarrow \P^* M$ the projection. The perverse microsheaves on $\P^* M$ \cite[Definition 6.1.2]{waschkies-microperverse} is defined as a subsheaf of the following sheaf. 

\begin{definition} \label{def: P-mush-cotangent}
We define the presheaf $\Pmsh_{\P^* M}^\pre$ on $\P^* M$ by \[\Pmsh_{\P^* M}^\pre(\Omega) \coloneqq \msh^\pre(\pi^{-1}(\Omega)) \]
for an open set $\Omega \subseteq \P^* M$, and denote its sheafification by $\Pmsh_{\P^* M}$. If $\Lambda \subseteq \Omega$ is a closed complex Legendrian, we can similarly define a presheaf on $\Omega$, using the notation in \eqref{ks mush-fixed}, by
\[\Pmsh_{\Lambda}^\pre(\Omega') \coloneqq \msh_{\pi^{-1}(\Lambda)}^\pre(\pi^{-1}(\Omega)), \]
and denote its sheafification by $\Pmsh_\Lambda$. Per \Cref{lem: msh_definitions-agree}, the canonical map  $\Pmsh_\Lambda \hookrightarrow \Pmsh$ is fully faithful and its image consists of those objects supported in $\Lambda$. Lastly, we define $\Pmsh_{\C-c}$ to be the subsheaf consisting of objects supported on complex Legendrians, which can also be obtained as the shaefification of $\Pmsh_{\C-c}^\pre(\Omega) = \msh_{\C-c}^\pre(\pi^{-1}(\Omega))$
\end{definition}

The following \Cref{thm: waschkies-sh-quo-loc} is one of the main theorems in \cite{waschkies-microperverse}, which gives a simple description of $\Pmsh_\Lambda$ at stalks when $\Lambda$ is in generic position:

\begin{definition}\label{def: generic-position}
Suppose that $M$ is complex analytic and $\Lambda \subset P^*M$ is a (singular) complex Legendrian. Let $r: P^*M \to M$ be the projection. We say that $\Lambda$ is in generic position if that $m \in M$ has the property that $\Lambda \cap r^{-1}(m)= \{p_1, \dots, p_k\}$ is a finite set. 
\end{definition}

\begin{theorem}[{\cite[Thm.\ 5.1.5]{waschkies-microperverse}}] \label{thm: waschkies-sh-quo-loc}
In the situation of \Cref{def: generic-position}, let $p \in r^{-1}(m) \cap \Lambda$.
There is a fully faithful functor 
\[(\Pmsh_\Lambda)_p = \mu sh^{pre}_{\pi^{-1}(\Lambda)}(\mathbb{C}^* \cdot p) \to (sh_{\C-c}/loc)_{m}= (sh_{\C-c})_m/loc_m.\] Its essential image is the full subcategory of $(sh_{\C-c}/loc)_{m}$ on objects whose microsupport is contained in $\Lambda \cap Op(\mathbb{C}^* \cdot p)$ for some small open neighborhood $Op(\mathbb{C}^* \cdot p)$.
\end{theorem}

\begin{proof}
The fully faithful functor is furnished by Waschkies \cite[Thm.\ 5.1.5]{waschkies-microperverse} using the microlocal cutoff. Let us temporarily denote by $\mathcal{A}$ the full subcategory of $(sh_{\C-c}/loc)_m$ on objects whose microsupport is entirely contained in $\pi^{-1}(\Lambda) \cap Op(\mathbb{C}^* \cdot p)$. That Waschkies' functor lands inside $\mathcal{A}$ is a consequence of \cite[Thm.\ 5.1.5(3)]{waschkies-microperverse}. We should prove that every $F \in \mathcal{A}$ lies in the image of Wacshkies' functor. We may assume $M$ is a ball, so that we have coordinates $T^\circ M= M \times T^\circ _m M$.
By assumption, there is some neighborhood $U \subset M$ of $m$ such that $SS(F) \cap \dot{\pi}^{-1}(U) \subset U \times \gamma$, where $\gamma$ is a neighborhood of $\mathbb{C}^*\cdot p$. Then by construction, Waschkies' map sends (the object in $\mu sh^{pre}_\Lambda(\mathbb{C}^* \cdot p)$ represented by) $F$ to $\Phi_{U, \gamma}(\mathcal{F})$, where $\Phi_{U, \gamma}(-)$ is the microlocal cutoff.  There is always a map $\alpha: \Phi_U(F) \to F$, and we need to show that the cone is a local system on $U$. Up to shrinking $M$, assume $U=M$. Then $cone(\alpha)$ has no microsupport in $\dot{\pi}^{-1}(M) \setminus (\gamma \times V)$ (because neither $F$ nor $\Phi_{U, \gamma} F$ have microsupport there). But $cone(\alpha)$ also has no microsupport in $U$ because $\Phi_{U, \gamma}$ induces an isomorphism in $\mu sh^{pre}(\gamma \times M)$ \cite[Def.\ 2.3.1(iii)]{waschkies-microperverse}. Hence the microsupport of $cone(\alpha)$ is contained in the zero section, as claimed.
\end{proof}

\begin{remark}
The analogue of \Cref{thm: waschkies-sh-quo-loc} with complex analyticity hypotheses removed
and with $\C^*$ replaced by $\R^+$ is also true (indeed easier: the cutoffs already in \cite{kashiwara-schapira} are good enough
and one does not need \cite{dagnolo-cutoff}); see \cite[Lem 6.7, Prop. 6.9]{nadler-shende}. 
\end{remark}

Now, we compare the two notions of microsheaves $\msh_{T^* M}$ and $\Pmsh_{\P^* M}$. The canonical map
\[\Pmsh_{\P^* M}^\pre(\Omega) \coloneqq \msh^\pre(\pi^{-1}(\Omega))  \rightarrow \msh(\pi^{-1}(\Omega) )\]
defines a map $\Pmsh_{P^* M} \rightarrow \pi_* (\msh |_{T^\circ M})$.

\begin{lemma} \label{lem: P-mush-fully-faithful}
The map $\Pmsh_{\P^* M} \hookrightarrow \pi_* \left( \msh|_{T^\circ M} \right)$ is fully-faithful.
\end{lemma}

\begin{proof}
It's enough to check on stalks. Take $p \in \P^* M$,  the map $(\Pmsh_{\P^* M})_p \rightarrow \left[ \pi_* \left( \mu sh|_{T^\circ M} \right) \right]_p$ is given by
\[ \mu sh^{pre}(\pi^{-1}(p)) \rightarrow \mu sh (\pi^{-1} (p)).\]
As mentioned in \eqref{hom of microsheaves is muhom}, the Hom sheaf on the right-hand side $\cH om_{\mu sh}$ is computed by $\mu hom$. In particular, $\Hom_{\mu sh (\pi^{-1} (p))}$ is computed by $\Gamma(\pi^{-1}(p); \mu hom (-,-)).$ But \cite[Proposition 2.4.4]{waschkies-microperverse} shows that it is also the case for the category on the left-hand side so we are done.
\end{proof}

Since $\mu sh^{pre}(\pi^{-1}(p)) = sh(M)/sh_{T^* M \setminus \pi^{-1}(p)}(M)$ is a quotient, objects in $\mu sh^{pre}(\pi^{-1}(p))$ are presented by sheaves. Thus, the proof of the above lemma shows characterizes the image as the following:
\begin{corollary} \label{cor: Pmsh-characterization}
The subsheaf $\Pmsh_{\P^* M}$ in $\pi_* (\msh_{T^\circ M})$ is equivalent to the subsheaf stalkwisely on $P^* M$ presented by a sheaf
\[\{\cF \in \pi_* (\mu sh |_{T^\circ M}) | \cF |_p \in \im (sh(M) \rightarrow  \mu sh(\pi^{-1}(p) ), \forall p \in \P^* M \}. \]
\end{corollary}

Similar to the real situation \Cref{thm: real-contact-transform}, there is a complex version of the contact transform.  

\begin{theorem}[{\cite[(11.4.8)]{kashiwara-schapira}}] \label{thm: complex-contact-transform}
Let $\cU \subseteq \P^* M$, and $\cV \subseteq \P^* N$ be open sets, and $\chi: \cU \xrightarrow \cV$ be a complex contactomorphism. Then, for any given $p \in \cU$, shrink $\cU$ if needed, one can assume that there exists a sheaf $K \in sh(M \times N)$ such that the functor $\Phi_K: sh(M) \rightarrow sh(N)$ given by convolving with $K$ induces an equivalence, often referred as contact transformation, 
\[ \Phi_K: \msh^{pre}_{T^\circ M} |_{\widetilde{\cU}} \xrightarrow{\sim } \tilde{\chi}^* \left(\msh^{pre}_{T^\circ N}|_{\widetilde{\cV}}\right) \]
where $\widetilde{\cU}$, $\widetilde{\cV}$ and $\tilde{\chi}$ are the corresponding symplectic lifts.
Consequently, it induces an equivalence $\msh_{T^\circ M}|_{\cU} \xrightarrow{\sim} \tilde{\chi}^* \msh_{T^\circ N} |_{\cV}$ which commutes with the canonical map $\msh^{pre} \rightarrow \msh$.
\end{theorem}

\begin{corollary}\label{cor: Pmsh-contact-transform}
With the notation as above, $\Phi_K$ induces an equivalence ${\pi_{M}}_* \msh_{\widetilde{\cU}} \xrightarrow{\sim} \chi^* {\pi_{N}}_* \msh_{\widetilde{\cV}}$ which restricts to $\Pmsh_{\cU} \xrightarrow{\sim} \chi^* \Pmsh_{\cV}$.    
\end{corollary}

\begin{proof}
The first equivalence is tautological. The second equivalence follows from the characterization of $\Pmsh$ in \Cref{cor: Pmsh-characterization} as locally presentable by sheaves and the fact that contact transformation commutes with the canonical map $\msh^{pre} \rightarrow \msh$ as mentioned in \Cref{thm: complex-contact-transform}.
\end{proof} 

In general, we do not know if the inclusion $\Pmsh_{\P^* M} \hookrightarrow  \pi_* (\msh_{T^\circ M})$ is an equivalence. However, it is the case when we restrict to complex constructible objects.

\begin{proposition}\label{prop: C-c-Pmsh-equal-all} 
The inclusion $\Pmsh_{\P^*M, \C-c} \xhookrightarrow{\sim} \pi_* (\msh_{T^\circ M, \C-c})$  is an equivalence.  
\end{proposition}

\begin{proof}

Let $p \in \P^* M $ and consider $\cF \in [\pi_* (\msh_{T^\circ M, \C-c})]_p = \msh_{T^\circ M, \C-c}(\pi^{-1}(p))$. Since objects in $\msh_{T^\circ M, \C-c}(\pi^{-1}(p))$ are germs of microsheaves near $\pi^{-1}(p)$, we can pick some $\Omega \subseteq \P^* M$ containing $p$ and realize $\cF$ as a microsheaf on $\pi^{-1}(\Omega)$ with $ss(F) \subseteq \pi^{-1}(\Omega)$ being a complex Lagrangian. Denote by $\Lambda \subseteq \Omega$ the corresponding complex Legendrian. 

By the previous \Cref{cor: Pmsh-contact-transform}, being in $\Pmsh$ is invariant under contact transform, so we can apply the Kashiwara--Kawai general position theorem (see \cite[Sec.\ 1.6]{kashiwara-kawai}) and assume that the composition $\Lambda \subseteq \P^* M \xrightarrow{r} M$ is finite to one near $p$. Shrinking $\Omega$ and $M$ if needed, we may assume that $\Lambda$ admits the standard form \cite[(2.5)]{kashiwara-vilonen}: There exists local coordinates $(z,\xi)$ such that $p = d z_n$ and $\Lambda$ is of the form $\P^*_S M$, where $S = \{f = 0\}$ is the zero locus of some holomorphic function $f = z_n^k + g(z)$ for some $k \in \N$ and for some $g(z) \in (z_1, \cdots, z_n)^{k+1}$. 

But this implies that $\cF \in \msh_{\P_S^* M}(\Omega)$ so we can apply \Cref{thm: doubling} and conclude that there exists an $F \in sh(M)$ which projects to $\cF$, which in particular implies that $\cF \in \Pmsh_{P^* M, \C-c}$. 
\end{proof}

\subsection{Canonical microsheaves and microstalks}\label{subsection:canonicalmicrostalkcomplex}
We return to considering an exact symplectic or contact manifold $W$ with corresponding structure morphism 
$W \rightarrow BU$.  Suppose given moreover a lift to stable quaternionic structure $W \rightarrow BSp$, e.g. arising from an underlying complex symplectic or contact structure as in \Cref{subsection:complex-contact-symplectic}. Per \Cref{def: quarternionic-grading/orientation}, $W$ carries a canonical orientation/grading datum. Per \Cref{thm: guillermou-maslov}, this determines a canonical $R$-Maslov datum, for $R$ a commutative ring $R$. \Cref{def: microsheaves} thus furnishes a sheaf $\mu sh_W$ of $R$-linear categories  on $W$. 

More generally, fix a coefficient category $\mathcal{C}$ and an anchored collection of subcategories $\{\mathcal{D}_i\}$. Endow $W$ with the $(\mathcal{C}, \{\mathcal{D}_i\})$ grading induced by the canonical $\mathbb{S}$-grading (see \Cref{def: quarternionic-grading/orientation} and \eqref{equation:chris-big-diagram}), and sssume given a $(\mathcal{C}, \{\mathcal{D}_i\})$ orientation $o$. Then \Cref{def: microsheaves} furnishes a sheaf of $\mathcal{C}$-linear categories on $W$ which we denote by $\mu sh_{W, o}$.

\begin{proof}[Proof of \Cref{theorem:intro-1-test}]   Let us first discuss the case $R = \Z$. 
We take the canonical $\Z$-Maslov datum.
Assume $L$ is a complex Lagrangian or Legendrian. Then, the polarization $\phi_L: L \rightarrow BO$ admits a lift to a complex polarization $L \rightarrow BU \rightarrow BO$. To compare it to the fiber polarization $\phi_L$ near L, we consider the secondary Maslov data. By \Cref{thm: guillermou-maslov}  secondary Maslov data are in one-to-one correspondence with secondary orientation/grading. The theorem thus reduces to \Cref{lem: spin}. 

For general $R$, note that \Cref{lem: spin} is itself deduced from \Cref{spin polarization}. Tracing the argument there, the only modification needed is to further compose $B^2 \Z \rightarrow B^2(\Z/2\Z)$ with $B^2(\Z/2\Z) \rightarrow B^2 R^\times$ (the latter map is the twice delooping of $\Z/2\Z = \mathbb{Z}^\times \rightarrow R^\times$). 
\end{proof}

By \Cref{theorem:intro-1-test}, $R$-secondary Maslov data for the canonical Maslov data are given by $R$-spin structures.

\begin{proof}[Proof of \Cref{corollary:test-microsheaf-intro}]
    From \Cref{theorem:intro-1-test} we have a canonical Maslov datum for $W$, giving the sheaf of stable categories $\mu sh_W$ via \Cref{def: microsheaves}. Also from \Cref{theorem:intro-1-test}, an $R$-spin structure $\sigma$ on $L$ determines a secondary Maslov datum which, per \Cref{lem: polarization-local-systems}, determines an equivalence $\msh_{L,\eta} \simeq \msh_{L, \phi_L} \simeq loc_L$. 
\end{proof}



\begin{proof}[Proof of \Cref{corollary:microstalk-intro-test}]
Without loss of generality, we can assume that $X=L$ is a (conical) complex Lagrangian disk. Endow $W$ with the canonical $R$-Maslov datum. According to \Cref{complex legendrian maslov}, there is a canonical choice of secondary grading. So any choice of secondary orientation induces a secondary Maslov datum, and hence, by \Cref{corollary:microstalk-secondarymaslov}, a microstalk functor $
    \omega_p^{-1}: (\mu sh_{L,o})_p \to R-mod$.
By \Cref{corollary:microstalk-secondarymaslov}, $\omega_p^{-1}$ acts unambiguously on the set of objects. 
\end{proof}

We can also generalized \Cref{corollary:microstalk-intro-test} to the ``constrained'' setting (see \Cref{sec: contrained}). Namely, assume that $W$ is an exact complex symplectic manifold and let $L \subset W$ be a (conical, smooth) complex Lagrangian. Fix a coefficient category $\mathcal{C}$ and an anchored collection of subcategories $\{\mathcal{D}_i\}$. Endow $W$ with the canonical $(\mathcal{C}, \{\mathcal{D}_i\})$ grading and assume given a $(\mathcal{C}, \{\mathcal{D}_i\})$ orientation $o$. According to \Cref{complex legendrian maslov}, there is a canonical choice of secondary grading. So any choice of secondary orientation induces a secondary Maslov datum, and hence, by \Cref{corollary:microstalk-secondarymaslov}, a microstalk functor $\omega_p^{-1}: (\mu sh_{L,o})_p \to \mathcal{C}$.
By \Cref{corollary: reduced-ambiguity}, the ambiguity of $\omega_p^{-1}$ is an element of of 
$\Pic(\cC)_{\{\cD_i\}}$. Hence, given one of the $\mathcal{D}_i$, it is meaningful to ask for the image of $\omega_p^{-1}$ to be contained in $\cD_i$. (If $\{\mathcal{D}_i\}$ is the total partition of $\mathcal{C}$ into $1$-object subcategories), $\omega_p^{-1}: (\mu sh_{L,o})_p \to R-mod$ is well-defined as an object, so we recover \Cref{corollary:microstalk-intro-test}.)

\begin{remark} 
    Integrability plays no role in the above arguments. That is, for the purpose of defining the canonical microstalk functor $\omega_p^{-1}$, it would be enough to assume that $W$ is an exact symplectic manifold endowed with a map $X \to BSp$ lifting the classifying map, and that $L$ is a (conical, smooth) Lagrangian endowed with a complex polarization $L \to LGr_\mathbb{C}$.
\end{remark}

Our next task is to compute $\omega_p^{-1}$ on cotangent bundles of complex manifolds, endowed with the Maslov datum induced by the complex polarization. Concretely, we are interested in the case when $L \subset S^*M$ is the conormal to a complex submanifold $N \subset M$ of complex codimension $n- \ell$, for $1 \leq \ell \leq n$. By Darboux, we may assume without loss of generality that $U= J^1\mathbb{R}^{2n}$ and $L$ is the conormal to $\mathbb{R}^{2\ell} \times \{0\}^{2n-2\ell}$.  Let $K \subset J^1(\mathbb{R}^{2n})$ be the zero section, so that $\psi_{\pi/2}(K)=L$, where $\psi_{\pi/2}(-)$ is defined as in \Cref{secondary maslov data}. Henceforth we write $\psi= \psi_{\pi/2}$. 

Assume, as above, that $T^*M$ is endowed with the canonical $(\mathcal{C}, \{\mathcal{D}_i\})$ grading and a fixed $(\mathcal{C}, \{\mathcal{D}_i\})$ orientation $o$.  We denote by $\eta$ the corresponding Maslov datum. We let $\phi_K, \phi_L$ denote the canonical polarizations transverse to $K, L$. We denote by $\psi_*\eta:= \eta \circ \psi^{-1}$ and $\psi_*\phi_K:= d\psi (\phi_K)$ the pushforwards of $\eta$ and $\phi_K$ under $\psi$. Observe that $\psi_*\phi_K:= d\psi (\phi_K)   = \phi_L$, by definition of $\psi$. 
\begin{equation}\label{equation:diagram}
\begin{tikzcd}
\mathcal{C} \ar[r, "\simeq"] \arrow[dd, "{[-2\ell]}"'] & (\mu sh_K)_p \ar[r, "\simeq"] \ar[dd, "(\psi)_*", "\simeq"'] & (\mu sh_{K, \phi_K})_p \ar[dd, bend right=60, "\simeq"'] \ar[d, "\simeq"]
\ar[r, "\omega_p^{-1}" ', "="] & (\mu sh_{K, \phi_K})_p = \mathcal{C} \ar[d, "\simeq"] \\
& & (\mu sh_{L, \phi_L})_p \ar[r, "="] \ar[d] & (\mu sh_{L, \phi_L})_p  = \mathcal{C} \ar[d, dotted, "{[-\ell]}"] \\
\mathcal{C} \ar[r, "\simeq"] & (\mu sh_L)_p \ar[r, "\simeq"] &  (\mu sh_{L,\phi_K})_p \ar[r, "\omega_p^{-1}" ', "\simeq"] & (\mu sh_{L, \phi_L})_p  =\mathcal{C}
\end{tikzcd}
\end{equation}

The dotted arrow means that the rightmost square commutes up to a noncanonical natural transformation (so its effect on objects is unambiguous). 

We first explain the meaning of the arrows in \eqref{equation:diagram}. The leftmost square is defined as in \Cref{lemma:fourier-shift-diagram}. The map $\mu sh_{K, \phi_K} \to \mu sh_{L,\phi_K}$ is induced from the contact transformation $\psi_*: \mu sh_{T^*K, \phi_K} \to \mu sh_{T^*K, \phi_K}$, after stabilizing with $(T^*\mathbb{R}^N, 0_{\mathbb{R}^N}), N \gg 1$. The identification $\mu sh_{K, \phi_K} = \mu sh_{\psi(K), \psi_*\phi_K}$ and the corresponding top-right commutative square are induced by the contactomorphism $\psi$. 

To explain the remaining arrows, recall that $\phi \mapsto (\mu sh_{L, \phi})_p$ forms a local system of categories over $LGr=U/O$, which is classified by $U/O \xrightarrow{BJ} BPic(\mathcal{C})$. The postcomposition $U/O \to BPic(\mathcal{C}) \to B\mathbb{Z}= U(1)$ is precisely $det^2(-)$; hence up to the action of $Pic_0(\mathcal{C})$, the monodromy automorphismm of any loop is precisely shifting by the degree of the loop under $det^2(-)$. 

The canonical microstalk functor 
$\omega_p^{-1}: (\mu sh_{L,\phi_K})_p \to (\mu sh_{L, \phi_L})_p  =\mathcal{C}$ is realized by choosing a homotopy of polarizations $\phi_K \rightsquigarrow \phi_L$ through \emph{complex} Lagrangians, and parallel transporting. Similarly, the arrow $(\mu sh_{L, \phi_L})_p \to (\mu sh_{L, \phi_K})_p$ in induced by the homotopy of polarizations  $\psi_t^{-1}: \psi_* \phi_K= \phi_L \rightsquigarrow \phi_K$. Hence the dotted arrow is induced by the loop of polarizations $\phi_L \rightsquigarrow \phi_K \rightsquigarrow \phi_L$, which defines an automorphism $\mathcal{C}= (\mu sh_{L, \phi_L})_p$. . The first homotopy lies in the kernel of $det^2(-)$, while the image of the path $\psi_t^{-1}: \psi_* \phi_K \rightsquigarrow \phi_K$ under $det^2(-)$ is a loop of degree $-\ell$.

\begin{corollary} \label{cor: complex-microstalk}
    Let $M, \eta$ be as above. Suppose that $L$ is a smooth Lagrangian disk contained in the the conormal of a complex  submanifold $N \subset M$ of complex codimension $\ell$, for $1 \leq \ell \leq n$. Let $p \in L$ be a smooth point. Given $A \in \mathcal{C}$, $\omega_p^{-1}(A_N)= A[\ell]$.
\end{corollary}
\begin{proof}
    Without loss of generality, we can assume that $M= \mathbb{R}^{2n}$ and $N = \mathbb{R}^{2\ell} \times \{0\}^{2n-2\ell}$. Let $K=0_{\mathbb{R}^{2n}}$. Then $\omega_p^{-1}(A_K)= A$; by  \eqref{equation:diagram}, $\omega_p^{-1}(A_N[-2\ell]) = \omega_p^{-1}(A_K)[-\ell]$, which proves the claim.
\end{proof}

\subsection{Microsheaves on complex contact manifolds and symplectic manifolds} \label{sec: complex-contact-case}

Let $V$ be a complex contact manifold.  Recall from \Cref{subsection:complex-contact-symplectic} that we have maps
\begin{equation}\label{equation:circle-bundle-diagram}
\begin{tikzcd}
\widetilde{V} \ar[r, "q" '] \ar[rr, bend left=20, "\pi"] & \widetilde{V}/\R_+ \ar[r, "p" '] & V,
\end{tikzcd}
\end{equation}
where $\widetilde{V}$ is an exact complex symplectic manifold with holomorphic $1$-form $\lambda_{\widetilde{V}}$.  For any subset (typically complex Legendrian) $L \subset V$, we similarly write $\widetilde{L} := \pi^{-1}(L)$ and 
$\widetilde{L}/\R_+ := p^{-1}(L)$.

Letting $\hbar \in \C^*$ act by the $\C^*$ principal bundle structure of $\widetilde V$ over $V$, we have $\hbar^* \lambda_{\widetilde V} = \hbar \lambda_{\widetilde V}$ for $\hbar \in \C^*$.  
In particular, multiplication by $\hbar$ descends to a real contactomorphism 
$(\widetilde V / \R_{>0}, \ker \re \lambda) \cong (\widetilde V / \R_{>0}, \ker \re \hbar \lambda)$. 

Per \Cref{def: quarternionic-grading/orientation}, $(\widetilde{V}, \re \lambda)$ carries canonical Maslov data $\eta_{can}$.  We write 
$$\mu sh_{\widetilde{V}}  \coloneqq \mu sh_{(\widetilde{V}, \re \lambda), \eta_{can}}.$$ Here we have, and will henceforth, set $\hbar = 1$.  This is no loss of generality: 
\begin{lemma}
    There is a canonical isomorphism
$\hbar^* \mu sh_{(\widetilde V, \re \hbar \lambda)} \cong \mu sh_{(\widetilde V, \re \lambda)}$.
\end{lemma}
\begin{proof}
    The canonical Maslov datum is pulled back from $V$, so the action of the contactomorphism given by multiplication by $\hbar$ is also canonically trivial on it. 
\end{proof}

More generally, we can consider any Maslov data $\eta$ on $\widetilde{V}$ and the associated sheaf $\msh_{\widetilde{V},\eta}$.


\begin{remark}
The sheaf of categories $\mu sh_{\widetilde V}$
is the pullback of the corresponding object 
on the real contact manifold $\widetilde{V} / \R_{>0}$; we adopt our present formulation solely  to avoid 
the minor cognitive dissonance of passing constantly to a non-complex manifold. 
\end{remark}


A closed and complex analytic subset $\widetilde{\Lambda} \subset \widetilde{V}$ is
$\R_{> 0}$-invariant iff it is $\C^*$-invariant, 
and hence the preimage of some $\Lambda \subset V$.  We will however also write $\widetilde \Lambda$ for complex and $\R_{> 0}$-invariant (but not necessarily closed or $\C^*$-invariant) subsets of $\widetilde V$.  We say such a subset is Lagrangian if it is contained in the closure of its smooth and Lagrangian locus. 

\begin{definition}
     We write $\mu sh_{\widetilde{V}, \C-c, \eta} \subset \mu sh_{\widetilde{V}, \eta}$ for the sheaf of full subcategories on objects with complex Lagrangian microsupport. 
\end{definition}

By definition, for $\R_{> 0}$-conic subsets $\Omega \subset \widetilde{V}$, the space of sections $\mu sh_{\widetilde{V}, \C-c, o}$ is the union of the categories $\mu sh_{\widetilde{\Lambda}, o}(\Omega)$, where $\widetilde{\Lambda}$ varies over $\R_{> 0}$-conic complex Lagrangian subsets $\widetilde{\Lambda}$ which are closed in $\Omega$.

\begin{remark}
For any fixed subanalytic Lagrangian $\Lambda$, the category $sh_{\Lambda}(M)$ 
is presentable, but $sh_{\C-c}(M)$ is not  (arbitrary sums of constructible sheaves
certainly need not be constructible).  The situation for the microsheaf categories is entirely analogous. 
The distinction is rarely relevant: for example, 
in the present article, while we state theorems for $\mu sh_{\widetilde{V}, \C-c, o}$, 
their proofs quickly reduce to statements about  $\mu sh_{\widetilde{\Lambda}, o}$.  
\end{remark}



\label{section:complex-symplectic}

We now discuss complex symplectic manifolds. Let $W= (W, \lambda)$ be an exact complex symplectic manifold, consider the contact thickening $(W \times \mathbb{C}, \lambda+ dz)$ and the associated symplectization 
\begin{equation}
(W \times \mathbb{C} \times \mathbb{C}^*, w( \lambda + dz)). 
\end{equation}

Let $V_\mathbb{R}$ denote the Liouville vector field for $(W, \lambda)$ and let $I$ denote the almost-complex structure induced by complex multiplication. Integrating the flow of $V_\mathbb{R}$ in the first component and the flow of $V_{S^1}:= I V_\mathbb{R}$ in second, we obtain an action $A: \mathbb{R} \times \mathbb{R} \times W \to W$ which satisfies $A_{(t, \theta)}^* \lambda = e^{t+ 2\pi i\theta}\lambda$. 

If $U \subset \mathbb{C}^*$ is contractible, then we can define an action $U \times W \to W$ by lifting $U$ to $\mathbb{R} \times \mathbb{R}$ by the covering map $(t, \theta) \mapsto e^{t+ 2\pi i \theta}$.  In many situations, the flow in the $I Z$ direction factors through $k\mathbb{Z} \subset \mathbb{R}, k \geq 1$; in this case, we can lift the action via the covering map $(t, \theta) \mapsto e^{k(t+ 2\pi i \theta)}$ to define a weight $k$ action of $\mathbb{C}^*$. 


Let us now suppose that $U \subset \mathbb{C}^*$ is a ball containing $1$, and consider the induced weight-$1$ action $U \times X \to X, (w, x) \mapsto A_w(x)$, $A_w^*(\lambda)= w\lambda$. We record the following convenient change of variables:
\begin{align}\label{equation:change-variables}
(W \times \mathbb{C} \times U, \lambda + w dz) &\xrightarrow{\simeq} (W \times \mathbb{C} \times U, w (\lambda +  dz))\\
(x, z, w) &\mapsto (A_{w^{-1}}(x), z, w)\nonumber
\end{align}

Given an exact complex-symplectic manifold $(W, \lambda)$, consider the contactization $(W \times \mathbb{C}, \lambda + dz)$. This is a complex-contact manifold, and so we can consider $\mu sh_{W \times \{0\}} \subset \mu sh_{W \times \mathbb{C}}$, which is the full subcategory on objects microsupported on $W \times \{0\}$. We also write $\mu sh_{W \times \{0\}, \mathbb{C}-c}$ for the full subcategory of $  \mu sh_{W \times \{0\}}$ on objects with complex constructible support. 

We now have two sheaves of categories on $W$, namely $\mu sh_W \coloneqq \msh_{(W,\re \lambda)}$ from \Cref{def: microsheaves} and $\mu sh_{W \times \{0\}}$.  They are not the same: for example, if $W$ is a point, then $\mu sh_W= C$ while $ \mu sh_{W \times \{0\}}$ is the category of local systems on $\mathbb{C}^*$. 

\begin{theorem} \label{weight-1 stuff} 
Suppose that the Liouville vector field of $(W, \lambda)$ integrates to a weight-1 $\C^*$-action. Let $\gamma_\mathbb{C}: W \to W$ be the set-theoretic identity, where the source is endowed with the Euclidean topology and the target with the $\mathbb{C}^*$-invariant topology.
Then there is a natural $\Z = \Omega S^1$-linear structure on $(\gamma_\mathbb{C})_* \mu sh_{W \times \{0\}}$ and an equivalence 
\begin{equation} \label{pmush invts to mush}
    (\gamma_\mathbb{C})_*\left(  \mu sh_{W \times \{0\}} \otimes_{\Omega S^1} \bullet \right) \simeq (\gamma_\mathbb{C})_*\mu sh_{W}.
\end{equation}
Furthermore, this equivalence respects complex constructibility. 
\end{theorem}
\begin{proof}
Up to replacing $W$ by a $\mathbb{C}^*$-invariant open, it is enough to exhibit the $\Omega S^1$-linear structure on $ \mu sh_{W \times \{0\} }(W \times \{0\})$, and to prove the equivalence \eqref{pmush invts to mush} on global sections. 

By our assumption on Liouville vector field, the change of variables \eqref{equation:change-variables} is global:
\begin{align}\label{equation:change-variables2}
(\widetilde{W \times \mathbb{C}} = W \times \mathbb{C} \times \mathbb{C}^*, w (\lambda +  dz)) &\xrightarrow{\simeq} (W \times \mathbb{C} \times \mathbb{C}^*, \lambda + w dz).  
\end{align}
The Liouville structure on the right hand side is a product; we have the  K\"unneth isomorphism:
\begin{equation}\label{equation:kun-iso}
\mu sh_{W \times 0 \times \C^*} = \mu sh_{W} \boxtimes \mu sh_{\C^* \subset T^* \C^*}.
\end{equation}

By definition
\[\mu sh_{W \times \{0\}}(W \times \mathbb{C})= \left(\pi_* \mu sh_{\pi^{-1}(W \times \{0\})} \right)(\widetilde{W \times \mathbb{C}}),\] 
where $\mu sh_{\pi^{-1}(W \times \{0\})}$ denotes the sheaf of full subcategories of $\mu sh_{\widetilde{W \times \mathbb{C}}}$ on objects whose support is contained in $\pi^{-1}(W \times \{0\})$; see \eqref{equation:circle-bundle-diagram}. But by \eqref{equation:change-variables2} and \eqref{equation:kun-iso}, we have \[\left(\pi_* \mu sh_{\pi^{-1}(W \times \{0\})}\right)(\widetilde{W \times \mathbb{C}})= \mu sh_{W}(W) \otimes loc(\mathbb{C}^*).\] 
\end{proof}

\begin{remark}\label{remark:diagonal-vs-factor}
Let $\mathbb{C}^* \times W \to W, (\theta, z)\mapsto \theta \cdot z$ be the weight-$1$ $\mathbb{C}^*$ action on $W$. Then $(\theta; x, z) \mapsto (\theta \cdot x, \theta z)$ defines a $\mathbb{C}^*$ action on $W \times \mathbb{C}$ by contactomorphism, which fixes $W \times \{0\}$ set-wise. \eqref{pmush invts to mush} amounts to taking invariants of this action; see \cite[Sec.\ 6]{McBreen-Shende-Zhou}. 
\end{remark}

\section{The perverse t-structure} \label{sec: perverse}

%

\subsection{t-structures} 
The notion of a \emph{$t$-structure} on a triangulated category was introduced in \cite{bbd}.  
We recall the definition and some basic properties.

\begin{definition}\label{definition:t-structure} \label{definition t}
Let $\mathcal{T}$ be a triangulated category. 
A pair of subcategories $\mathcal{T}^{\le 0}, \mathcal{T}^{\ge 0}$ determine a \emph{t-structure} if the following conditions are satisfied:
\begin{itemize}
\item[(i)] 
For any $K' \in T^{\le 0}$ and $K'' \in T^{\ge 0}$, 
we have $\Hom(K', K''[-1]) = 0$. 
\item[(ii)] If $K' \in \mathcal{T}^{\le 0}$ then $K'[1] \in \mathcal{T}^{\le 0}$; similarly if $K'' \in \cT^{\ge 0}$ then $K''[-1] \in \mathcal{T}^{\ge 0}$. 
\item[(iii)] Given $K \in \mathcal{T}$, there exist $K' \in \mathcal{T}^{\le 0}$ and $K'' \in \mathcal{T}^{\ge 0}$, and 
a distinguished triangle $$K' \to K \to K''[-1] \xrightarrow{[1]}$$
\end{itemize}
We write $\mathcal{T}^{\ge n} := \mathcal{T}^{\ge 0}[-n]$ and $\mathcal{T}^{\le n} := \mathcal{T}^{\le 0}[-n]$.  The {\em heart} of the t-structure is $\mathcal{T}^{\heartsuit} := \mathcal{T}^{\leq 0} \cap \mathcal{T}^{\geq 0} \subset T$. 
\end{definition}

It is shown that there are {\em truncation functors}
$\tau^{\le n} : \cT \to \cT^{\le n}$ and $\tau^{\ge n} : \cT \to \cT^{\ge n}$ which are 
right and left adjoint to the inclusions of the corresponding subcategories.  
The truncation functors commute in an appropriate sense
(e.g. when composed with the inclusions so as to define endomorphisms of $\cT$).  
Then $H^0 := \tau^{\le 0} \tau^{\ge 0} = \tau^{\ge 0} \tau^{\le 0}$ defines a map 
$\cT \to \cT^{\heartsuit}$, and one writes $H^n: \cT \to \cT^{\heartsuit}$ for the appropriate
composition with the shift functor. 
Finally,  $\cT^\heartsuit$ is an abelian category,
closed under extensions \cite[Thm.\ 1.3.6]{bbd}.

The prototypical example is when $\cT$ is a derived category of chain complexes, 
 $\cT^{\le 0}$ (resp. $\cT^{\ge 0}$) consists of the complexes whose cohomology is 
concentrated in degrees $\le 0$ (resp. $\ge 0$).

Let us recall a result about when $t$-structures pass to quotient categories. 

\begin{lemma}[Lem.\ 3.3 in \cite{chuang2017perverse}]\label{lemma:induced-t-structure}
Let $\mathcal{T}^{\leq 0}, \mathcal{T}^{\geq 0}$ determine a $t$-structure on $\cT$. 
Let $\cI \subset \cT$ be a triangulated subcategory, closed under taking direct summands
(``thick subcategory'') and let $Q: \cT \to \cT/\cI$ be the Verdier quotient.  Then: 
\begin{enumerate}
\item 
$\mathcal{T}^{\leq 0} \cap \mathcal{I}, \mathcal{T}^{\geq 0} \cap \mathcal{I}$ determine
a $t$-structure if and only if $\tau_{\leq 0} \mathcal{I} \subset \mathcal{I}$ 
\item if the equivalent assertions of (1) hold, 
 $Q(\mathcal{T}^{\leq 0}), Q(\mathcal{T}^{\geq 0} )$ determine a $t$-structure
 if and only if 
$\mathcal{I} \cap \cT^\heartsuit \subset \cT^\heartsuit$ is a ``Serre subcategory'' (meaning it is closed under extensions, quotients and sub-objects). 
\end{enumerate}
\end{lemma}

The notion of $t$-structure is imported to the setting of stable categories in \cite[Sec.\ 1.2]{lurie-ha}: 
By definition, a $t$-structure on a stable category is a $t$-structure on its homotopy category,
which canonically carries the structure of a triangulated category.  It is shown that the 
various properties of $t$-structures lift to the stable setting, in particular, the existence
of truncation functors, and the fact that the full subcategory on objects in the heart is abelian.\footnote{Let us avoid a possible source of confusion.  One might think that, insofar as stable categories generalize dg categories, the heart 
could be expected to have, in its hom spaces, whatever corresponds to the positive ext groups.  
This depends on whether or not the stable category is viewed as a usual $\infty$-category, or as an $\infty$-category enriched in spectra.
Indeed, the positive ext groups (in cohomological grading conventions) correspond to negative homotopy groups, so are only manifest after the 
(canonical) enrichment in spectra.  Here however the statement about the heart should be understood
in terms of the not enriched $\infty$-category.}   
For consistency with \cite{bbd} (and in contrast to \cite{lurie-ha}), we adopt cohomological conventions and write $H^i$ instead of $\pi_{-i}$. 

\begin{remark} \label{t compact objects}
When $\cC$ is a presentable stable category, then if either $\cC^{\ge 0}$ or $\cC^{\le 0}$ is presentable, 
then so is the other, and all truncation functors are colimit preserving \cite[1.4.4.13]{lurie-ha}.  In this case, 
the subcategory of compact objects $\cC^c$ is stable under the truncation functors and inherits
a $t$-structure.  Indeed, $\tau^{\ge 0}$ is left adjoint to the corresponding inclusion, assumed colimit preserving,
hence $\tau^{\ge 0}$ preserves compact objects.  Taking cones, so does $\tau^{\le 0}$.  
\end{remark} 

We will study sheaves of $t$-structures on sheaves of categories. 

\begin{definition} \label{sheaf t structure} 
Let $M$ be a topological space and $\mathcal{F}$ a sheaf of stable categories on $M$.  
We say a pair of sheaves  of full subcategories  $\mathcal{F}^{\le 0}$ and $\mathcal{F}^{\ge 0}$  
define a $t$-structure on $\mathcal{F}$ 
if $\mathcal{F}^{\le 0}(U)$ and $\mathcal{F}^{\ge 0}(U)$  define a $t$-structure on 
$\mathcal{F}(U)$ for all $U$. 
\end{definition} 

\begin{lemma}
    \label{lem: t-structure-local}
The property that  $\mathcal{F}^{\le 0}$  and  $\mathcal{F}^{\ge 0}$ define a $t$-structure may be checked
on sections on any base of open sets.
\end{lemma}
\begin{proof}  Indeed, regarding condition (i) and (ii) of \Cref{definition:t-structure}, it is immediate from
the sheaf condition 
that vanishing of Homs and containment of subcategories can be checked locally.  

Regarding (iii), the key point is that 
for any candidate $t$-structure satisfying (i) and (ii), 
the space of fiber sequences $K' \to K \to K''$ as requested in (iii) is either empty or contractible.  
Indeed, first recall that using property (ii) to apply (i) to shifts, we find the following strengthening of (i): for any 
$K' \in T^{\le 0}$ and $K'' \in T^{\ge 0}$, the negative exts, aka positive homotopy groups of the hom {\em space} $\Hom(K', K''[-1])$,
must vanish.  Now given any $K' \to K \to K''[-1]$ and
 $L' \to K \to L''[-1]$ both satisfying (iii), we obtain a canonical null-homotopy of the composition $K' \to K \to L''[-1]$ hence lift of
 $K' \to K$ to $K' \to L'$, etc.  
 
 Having learned this contractibility, if (iii) holds locally, then we can canonically glue the local exact triangles to obtain (iii) globally. 
\end{proof} 

Since pullbacks commute with limits, 
$\mathcal{F}^\heartsuit := \mathcal{F}^{\leq 0} \cap \mathcal{F}^{\geq 0}$
defines a sheaf of $(\infty,1)$-categories.  As hearts of $t$-structures,
these categories are abelian, in particular, 1-categories.

\begin{remark}
In the classical literature, the sheaf condition for sheaves of 1-categories (such sheaves are sometimes called stacks) is formulated as a limit of 1-categories taken in the $(2, 1)$-category of ordinary categories.  In this formulation, 
the compatibility condition on triple overlaps is strict.  

By contrast, the notion of  $\infty$-categorical sheaf of $(\infty, 1)$-categories requires that for such a sheaf $\cC$ and covers
$U = \bigcup U_i$, one has
$$\cC(U) = \lim \left( \prod_{i \in I} \cC(U_i) \rightrightarrows \prod_{i, j \in I} \cC(U_i \cap U_j)
 \mathrel{\substack{\textstyle\rightarrow\\[-0.6ex]
                      \textstyle\rightarrow \\[-0.6ex]
                      \textstyle\rightarrow}}
\prod_{i, j, k \in I} \cC(U_{i} \cap U_j \cap U_k) \cdots \right).
$$

These notions are equivalent for $1$-categories: one passes from the $\infty$-categorical notion to the $1$-categorical notion by truncation, and for the reverse direction one need only note that  $1$-categories are $1$-truncated objects 
of $(\infty, 1)$-categories, and the inclusion of $k$-truncated objects is limit-preserving  \cite[Proposition 5.5.6.5]{luriehttpub}.  
\end{remark}

\subsection{The perverse t-structure on constructible sheaves}\label{subsection:perverse-t-structure-summary}
We now review from \cite[Sec.\ 10.3]{kashiwara-schapira} the microlocal description of the perverse $t$-structure on constructible sheaves. 

Let $M$ be a complex manifold.  For a  Lagrangian 
subset $\Lambda \subset T^*M$, we write $\Lambda^{\circ}$ for the locus of smooth points of $\Lambda$ where the map 
$\Lambda \to M$ has locally constant rank. Fix a $t$-structure $\cC^{\le 0}, \cC^{\ge 0}$ on our coefficient category $\cC$, with corresponding truncation functors $\tau^{\leq 0}, \tau^{\geq 0}$. 
It is proved in \cite[Theorem 10.3.12]{kashiwara-schapira} that the following prescription characterizes the perverse $t$-structure on $sh(M)_{\C-c}$.\footnote{The results stated in \cite{kashiwara-schapira} are for $\mathcal{C}$ the bounded derived category of modules over a ring and and $t$ the standard $t$-structure.  However, the arguments given there (or in \cite{bbd}) for the existence of the perverse $t$-structure 
  depend only on the general properties  of the six functor formalism, and the comparison between 
 Definition \ref{definition:perverse-t-structure} and the usual stalk/costalk wise definition of the perverse $t$-structure depends only
 on standard properties of microsupports.}  

\begin{definition}[{\cite[(10.3.7) and Definition 10.3.7]{kashiwara-schapira}}] \label{definition:perverse-t-structure} 
Let ${}^\mu sh(M)_{\C-c}^{\leq 0}$ (resp.\  ${}^\mu sh(M)_{\C-c}^{\geq 0}$) be the full subcategory of $sh(M)_{\C-c}$ on objects $F$ with the property that, for every $p \in ss(F)^{\circ}$ such that $\pi: SS(F) \to M$ has constant rank on a neighborhood of $p$, there exists a submanifold $N$ and $L \in \mathcal{C}$ such that $F \simeq L_Y[dim(N)] \in (\mu sh_{T^*M})_p$ and $\tau^{\geq 1}F \simeq 0$ (resp.\ $\tau^{\leq -1}F \simeq 0$).  
\end{definition}

\Cref{definition:perverse-t-structure} can be equivalently expressed using the microstalk functor of \Cref{corollary:microstalk-intro-test}.

\begin{definition}\label{definition:our-perverse-t-structure} 
Consider the following full subcategories of $sh(M)_{\C-c}$. 
$$ {}^\mu sh(M)_{\C-c}^{\leq 0} := \{F \in  sh(M)_{\C-c}\, | \,  p \in ss(F)^{\circ} \implies \omega_p^{-1} F[-n]  \in \cC^{\le 0} \} $$
$$ {}^\mu sh(M)_{\C-c}^{\geq 0} := \{F \in  sh(M)_{\C-c}\, | \,  p \in ss(F)^{\circ} \implies \omega_p^{-1} F[-n]  \in \cC^{\ge 0}\} $$
\end{definition}

\begin{proposition} \label{prop: t-structure}
\Cref{definition:our-perverse-t-structure} and \Cref{definition:perverse-t-structure} agree.
\end{proposition}
\begin{proof} 
Follows immediately from \Cref{cor: complex-microstalk}. 
\end{proof}


\begin{lemma}\label{lemma:achar-facts} 
If $\Lambda \subset T^*M$ is (possibly singular) subanalytic complex Lagrangian and $\Omega \subseteq \P^* M$ is an open set, then \Cref{definition:perverse-t-structure} induces a $t$-structure on 
\[ sh_{\Lambda \cup \pi^{-1}(\Omega^c), \C-c}(M) = \{F \in sh_{\C-c}(M)| ss(F) \cap \pi^{-1}(\Omega) \subseteq \Lambda\}.\] Moreover, $sh_{\Lambda \cup \pi^{-1}(\Omega^c), \C-c}(M)^\heartsuit$ is closed under extensions inside $sh(M)$. 

\end{lemma}
\begin{proof}
By (1) of \Cref{lemma:induced-t-structure}, we only need to check that $\tau_{\leq k} F$ is contained in the subcategory if $F$ is.   Suppose not; then for any neighborhood $U$ of $\mathbb{C}^*\cdot p$,  $ss(\tau_{\leq k} F) - \Lambda$ is non-empty. Since $\tau_{\leq k} F$ is constructible, $(ss(\tau_{\leq k} F) - \Lambda) \cap U$ must have a smooth Legendrian point $q$.  But then the microstalk of $\tau_{\leq k} F$ at this point $q$ is the truncation of the microstalk of $F$, which is zero. A contradiction.

Now suppose given $F', F'' \in sh_{\Lambda}(M)^{\heartsuit}$ and some extension
$0\to F' \to F \to F'' \to 0$.  The microstalk functors are $t$-exact by construction
so we get a corresponding extension of microstalks inside $C^\heartsuit$, which
is closed under extensions.  All microstalks of $F$ lie in $C^\heartsuit$, 
so $F \in sh_{\Lambda}(M)^\heartsuit$. The same argument also shows that $sh_\Lambda(M)^\heartsuit$ is closed under quotients and subobjects. 
\end{proof}

\begin{theorem} \cite{waschkies-microperverse} \label{thm: microperverse t structure in cotangent} 
The microlocal perverse t-structure in \Cref{definition:our-perverse-t-structure} induces a perverse $t$-structure on $\pi_*(\msh_{T^\circ M, \C-c})$. Furthermore, for $\cF, \cG \in [\pi_*(\msh_{T^\circ M, \C-c}) ]^\heartsuit$, the hom-sheaf 
\[\cH om_{\pi_*(\msh_{T^\circ M, \C-c})}(\cG,\cF)[ \dim M]\] 
is a perverse sheaf on the $T^\circ M$.
\end{theorem}

\begin{proof}
By \Cref{prop: C-c-Pmsh-equal-all}, the inclusion $\Pmsh_{\P^* M, \C-c} \xhookrightarrow{\sim} \pi_*(\msh_{T^\circ M, \C-c})$ is an equivalence so we can work with $\Pmsh_{\P^* M, \C-c}$. Any $\cF \in \Pmsh_{P^* M, \C-c}(\Omega)$ tautologically belongs to $\Pmsh_{\Lambda}(\Omega)$ where $\Lambda \coloneqq \supp(\cF)$, so we could fix a support condition $\Lambda$. By \Cref{lem: t-structure-local}, it's enough to check it on an open cover. But since $\Pmsh_\Lambda$ is constructible, there exits a cover $\cU$ such that each $\Omega$ satisfies 
$\Pmsh_\Lambda(\Omega) = (\Pmsh_\Lambda)_p$ for some $p \in \Omega$, so we can check on stalks. By \Cref{cor: Pmsh-contact-transform} and \Cref{prop: t-structure}, but $(\Pmsh_\Lambda)_p$ and the notion of microstalks are invariant under contact transform, so we can assume $\Lambda$ is in general position. In this case, by \Cref{thm: waschkies-sh-quo-loc}, $(\Pmsh_\Lambda)_p \hookrightarrow (sh_\Lambda)_m/loc_m$ has image given by the category in the previous \Cref{lemma:achar-facts}, which we've to have a t-structure, and it clear induces a $t$-structure on the quotient by applying \Cref{lemma:induced-t-structure} to $loc$.

For the statement regarding the hom sheaf, let $\cF, \cG \in (\pi_* \msh_{T^\circ M,\C-c})^\heartsuit$.  Since perversity can be checked locally, we may pick some sheaves $F$ and $G$ representing $\cF$ and $\cG$. But in this case, by (\ref{hom of microsheaves is muhom}), we have the identification
\[\cH om_{\pi_* \msh_{T^\circ M,\C-c}}(\cG,\cF) = \cH om_{\pi_* \msh_{T^\circ M,\C-c}}(G,F) = \mu hom(G,F).\]
But then, \cite[Corollary 10.3.20]{kashiwara-schapira} implies that $\cH om_{\pmu sh_V}(\cG,\cF)[\dim M] = \mu hom(G,F)[\dim M]$ is perverse.
\end{proof}


\subsection{Perverse microsheaves on complex contact and symplectic manifolds}

We now define notion of perverse t-structure for microsheaves on complex symplectic and contact manifolds for the canonical microsheaves over $\cC = R -mod$, for a discrete commutative ring $R$. We postpone the general discussion later in \Cref{def: exotic-microperverse-t-structure} for conceptual clarity.

Recall by \Cref{corollary:microstalk-intro-test}, for any microsheaf $\cF$ on $V$, there is a well-defined object $\omega_p^{-1}(F) \in \cC$.

\begin{definition} \label{def: canonical-t-structure}
Let $V$ be a contact manifold of complex dimension $2n-1$. We define the pair of subcategories $( (\mu sh_{\widetilde V, \C-c})^{\le 0}, (\mu sh_{\widetilde V, \C-c})^{\ge 0})$ by constraining the microstalks:
\[(\mu sh_{\widetilde V, \C-c})^{\le 0} := \{\cF \in  \msh_{V,\C-c}\, | \,  p \in \supp(\cF)^{\circ} \implies \omega_p^{-1} \cF[-n]  \in \cC^{\le 0} \}\] 
\[(\mu sh_{\widetilde V, \C-c})^{\ge 0} := \{\cF \in  \msh_{V,\C-c}\, | \,  p \in \supp(\cF)^{\circ} \implies \omega_p^{-1} \cF[-n]  \in \cC^{\ge 0} \}\] 
We define similarly the corresponding notions for objects supported in some fixed
(singular) Lagrangian, and 
define as always the corresponding notions on $V$ by pushforward. \footnote{The $[n]$ is just a convention, set to match the usual conventions for perverse sheaves.}
\end{definition}

\begin{theorem}\label{theorem:main-comparison}
$ (  \msh_{V, \C-c})^{\le 0}$, $(  \msh_{V, \C-c})^{\ge 0}$ determine
a $t$-structure on $  \msh_{V, \C-c}$. 
In particular, 
$( \msh_{ V, \C-c})^\heartsuit$ is a sheaf of abelian categories.

Furthermore, for $\cF, \cG \in \msh_{V,\C-c}^\heartsuit$, the sheaf of morphisms 
$$\cH om_{\msh_V}(\cG,\cF)[\frac{1}{2} \dim \widetilde{V}]$$ 
is a perverse sheaf on the symplectization $\widetilde{V}$.  
\end{theorem}
\begin{proof}
Per \Cref{lem: t-structure-local}, a pair of subcategories $(\cC^\leq,\cC^\geq)$ being a t-structure can be checked on open covers. Similarly, being a perverse sheaf if also a local condition. Thus, we reduce to \Cref{thm: microperverse t structure in cotangent} by taking Darboux charts. 
\end{proof} 

We deduce \Cref{theorem:main symplectic intro} from the introduction. 

\begin{proof}[Proof of \Cref{theorem:main symplectic intro}]
By \Cref{weight-1 stuff}, there is an equivalence $(({\gamma_\C})_*\mathbb{P} \mu sh_{W \times \{0\}})^{\mathbb{C}^*} \simeq ({\gamma_\C})_*\mu sh_{W}.$
The $\mathbb{C}^*$-action manifestly preserves the subspaces $\mu sh_{W, \C-c}^{\le 0}$ and $\mu sh_{W, \C-c}^{\ge 0}$, so the $t$-structure passes to the invariants (taking hearts is a pullback ($\mathcal{F}^\heartsuit := \mathcal{F}^{\leq 0} \cap \mathcal{F}^{\geq 0}$) hence a limit, and hence commutes with taking $G$-invariants which is also a limit).

Now, we consider the perversity of sheaf Hom. We recall that objects here admits two different interpretations: as microsheaves on $\widetilde{{W \times \C}}$ and as microsheaves on the underlying real $W$ since \Cref{weight-1 stuff} is needed to define $\mu sh_{W, \C-c}^{\heartsuit}$. Denote by $\widetilde{\cF}$, $\widetilde{\cG}$ for the former and $\cF$, $\cG$ for the latter, then we see from the contact case that $\mu hom(\widetilde{\cG},\widetilde{\cF})[\frac{1}{2}(\dim W) + 1)]$ is a perverse sheaf on $\widetilde{W \times \C}$. However, to descend them onto $W$, we recall that such objects are assumed to be locally constant along the $\C^*$-action and we take $\C^*$-invariant to quotient the extra direction. This process drops the dimension by $1$ and we thus conclude that $\mu hom(\cG,\cF)[\frac{1}{2} \dim W]$ is perverse.
\end{proof}

Now, let $\cC$ be any symmetric monoidal category and $(\cC^{\leq 0}, \cC^{\geq 0})$ be a $t$-structure on $\cC$. We explain how one can generalize \Cref{def: canonical-t-structure} to allow more flexibility on Maslov data. For this purpose, we recall the notion of constrained Maslov data from \Cref{sec: contrained}: A collection of subcategories (of $\cC$) $\{ \cD_i \}$ is said to be anchored if the submonoid $\Pic(\cC)_{\{\cD_i\}}$ fixing each $\cD_i$ is a subgroup.

\begin{lemma} \label{lem: t-structure-anchored}
Let $(\cC^{\leq 0}, \cC^{\geq 0})$ be a $t$-structure on $\cC$, then the collection $\{\cC^{\leq 0}, \cC^{\geq 0}\}$ is anchored. 
\end{lemma}

\begin{proof}
It is sufficient to show that, for $x \in \Pic(\cC)$, tensoring $x \otimes (-)$ fixes $\cC^{\leq 0}$ if and only if tensoring its inverse $x^{-1} \otimes (-)$ fixes $\cC^{\geq 0}$. We show the ``if" direction, since the argument is symmetric. As remarked in \cite[Remark 1.2.1.3]{lurie-ha}, an object $a \in \cC$ is in $\cC^{\geq 1}$ if $\Hom(b,a) = 0$ for all $b \in \cC^{\leq 0}$. Thus we consider such objects and compute that $\Hom(b, x^{-1} \otimes a) = \Hom(x \otimes b, a) = 0$. This implies that $x^{-1} \otimes (-)$ fixes $\cC^{\geq 1}$ but, since $[n]$ commutes with tensor, it fixes $\cC^{\geq n}$ for all $n \in \Z$.
\end{proof}

Since the collection of subcategories $\{\cC^{\leq 0}, \cC^{\geq 0}\}$ is anchored, there is a notion of constrained gradings and orientations defined in \Cref{def: secondary-orientation}. As explained by \Cref{equation:chris-big-diagram}, the canonical grading from the complexes structure induces a $(\cC, \{\cC^{\leq 0},\cC^{\geq 0} \})$-grading. 

\begin{definition}
Let $o$ be a  $(\cC, \{\cC^{\leq 0},\cC^{\geq 0} \})$-orientation, i.e., a lifting of the induced grading to a $\cC$-Maslov data. We denote by $\msh_{\widetilde{V},o}$ the associated sheaf of microsheaves on $\widetilde{V}$ and similar notations for the subsheaves with support conditions. 
\end{definition}
Let $L$ be a complex Legendrian. Then \Cref{complex legendrian maslov} allows us to choose constrained secondary Maslov data, in the sense defined above \Cref{corollary: reduced-ambiguity}, and the cited Proposition implies that, for any constructible microsheaf $\cF \in \msh_{\widetilde{V},\C-c,o}$ and any smooth point $p \in \supp(\cF)^\circ$, whether the microstalk $\omega_p^{-1}\cF$ is in $\cC^{\leq 0}$ or $\cC^{\geq 0}$ (and hence all their shiftings) is a well-defined notion. Thus, we have the following generalization of \Cref{def: canonical-t-structure}:

\begin{definition} \label{def: exotic-microperverse-t-structure} 
Fix a $t$-structure on the coefficient category $\cC$. Let $V$ be a contact manifold of complex dimension $2n-1$. 
Fix any $(\cC, \{\cC^{\leq 0}, \cC^{\geq 0}\})$-orientation  data $o$. We define subcategories of $\mu sh_{\widetilde V, \C-c, o}$ by constraining the microstalks
to respect the $t$-structure of $\cC$:
\[(\mu sh_{\widetilde V, \C-c, o})^{\le 0} := \{\cF \in  \msh_{V,\C-c,o}\, | \,  p \in \supp(\cF)^{\circ} \implies \omega_p^{-1} \cF[-n]  \in \cC^{\le 0} \}\] 

\[(\mu sh_{\widetilde V, \C-c, o})^{\ge 0} := \{\cF \in  \msh_{V,\C-c, o}\, | \,  p \in \supp(\cF)^{\circ} \implies \omega_p^{-1} \cF[-n]  \in \cC^{\ge 0} \}\]
We define similarly the corresponding notions for objects supported in some fixed
(singular) Lagrangian, and 
define as always the corresponding notions on $V$ by pushforward.  
\end{definition} 


As perversity can be checked locally, the exact same argument of \Cref{theorem:main-comparison} and \Cref{theorem:main symplectic intro} implies the following theorem.

\begin{theorem}\label{theorem:main-comparison}
Fix a $t$-structure on the coefficient category $\cC$. Let $V$ be a contact manifold of complex dimension $2n-1$. Or, similarly, let $W$ be an exact complex symplectic manifold of complex dimension $2n$ with a $\C^*$-action of weight $1$. 

Given any $(\cC, \{\cC^{\leq 0}, \cC^{\geq 0}\})$-orientation  data $o$ on $V$ (resp. $W$), the pairs 
\[\left( (\msh_{V, \C-c, o }\right)^{\le 0}, (  \msh_{V, \C-c, o})^{\ge 0}  ) \left(\text{resp.} \ \left( \left((\gamma_\mathbb{C})_* \mu sh_{W, \C-c}\right)^{\geq 0}\right), \left((\gamma_\mathbb{C})_* \mu sh_{W, \C-c}\right)^{\leq 0} \right) \] determine
a $t$-structure on $\msh_{V, \C-c, o}$ (resp. $(\gamma_\mathbb{C})_* \mu sh_{W, \C-c}$). Furthermore, for $\cF, \cG \in \msh_{V,\C-c,o}^\heartsuit$ $\left(resp. \left( (\gamma_\mathbb{C})_* \mu sh_{W, \C-c})^\heartsuit \right) \right)$, the sheaf of morphisms 
\[\cH om_{\msh_V}(\cG,\cF)[\frac{1}{2} \dim \widetilde{V}] \left( \text{resp.} \ \cH om_{(\gamma_\C)_*\msh_W}(\cG,\cF)[\frac{1}{2} \dim W] \right) \]
is a perverse sheaf on the symplectization $\widetilde{V}$ (resp. $W$).  
\end{theorem}





\begin{remark}
A $t$-structure is said to be nondegenerate if $\bigcap \cC^{\le 0} = 0 = \bigcap \cC^{\ge 0}$. 
By co-isotropicity of microsupport, the vanishing of all microstalks implies the vanishing of an object. 
We conclude that if the $t$-structure on $\cC$ is non-degenerate, then the $t$-structure on $\mu sh_{V, \mathbb{C}-c, o}$ is also non-degenerate. 
\end{remark}

\begin{remark}
Choose a stable (not necessarily presentable) subcategory $\cD \subset \cC$ to which the $t$-structure
restricts.  Require $(\cC, \{ \cC^{\le 0}, \cC^{\ge 0}, \cD\})$-orientation data.  Then it is evident from the definitions
that the truncation functors preserve, hence define a $t$-structure on, the subcategory of objects
with all microstalks in $\cD$, characterized by the same formulas save only with e.g. $\cC^{\le 0}$ replaced
by $\cC^{\le 0} \cap \cD$.  For instance, we can take various bounded categories e.g.
 $\cD = \cC^+ = \bigcup \cC^{\ge n}$, or $\cD = \cC^- = \bigcap \cC^{\le n}$, or $\cD = \cC^b = \cC^+ \cap \cC^-$, or ask the microstalks to be compact objects $\cD = \cC^c$. 
\end{remark}

\begin{remark}
Fix $D \subset C$ as above, and assume $D^{\heartsuit}$ is Artinian.  (E.g., $C = k-mod$ for a field $k$ and $D = C^c$.)
Then the full subcategory of $( \mu sh_{V, \C-c,o})^{D, \heartsuit}$ on objects with finitely stratified (rather than just locally finite)
support is also Artinian.  Indeed, any descending chain must have eventually stabilizing microsupports; we may restrict attention
to one each in the finitely many connected components of the smooth locus of the support, hence by some point, all have stabilized. 
\end{remark} 

\begin{remark}\label{remark:sheaf-vs-presheaf}
    Let $V$ be a complex contact manifold and let $L \subset V$ be a complex Legendrian. Then $ \mu sh_{V,L}(-)$ is a sheaf of stable categories while $\mathbb{P}erv_{V,L}(-)$ is a sheaf of abelian categories. Be warned however that $D(\mathbb{P}erv_{V,L}(-))$ is only a \emph{presheaf} of stable categories. 
    In particular, the natural map $D(\mathbb{P}erv_{V,L}(-)) \to \mu sh_{V,L}(-)$ may restrict to an equivalence on stalks without being an equivalence on global sections.  A very special case:  let $V= T^*S^2 \times \mathbb{\C}$ and $L= 0_{S^2} \times \{0\}$. Then $ \mu sh_{V, L}(L)= loc(S^2) \otimes loc(\mathbb{C}^*)$ while $\mathbb{P}erv_{V, L}(L)= vect_\mathbb{C} \otimes loc(\mathbb{C}^*)$, due to $S^2$ being simply connected.  Similarly, $\mu sh_{T^*\mathbb{S}^2, 0_{S^2}}(0_{S^2})= loc(S^2)$ while  $\mu sh_{T^*\mathbb{S}^2, 0_{S^2}}(0_{S^2})^{\heartsuit}= vect_\mathbb{C}$.
\end{remark}

\appendix

\section{Existence of orientation data for real symplectic manifolds \\ $\phantom{N}$ $\qquad \qquad \qquad \qquad$ by Sanath Devalapurkar} 

We keep the notation of Section \ref{g-od}. We write $\Sq^2: B(\mathbb{Z}/2) \to B^3(\mathbb{Z}/2)$ for the map between infinite loop spaces representing the second Steenrod operation $\Sq^2: H^*(-;\Z/2) \rightarrow H^{*+2}(-;\Z/2)$.


\begin{lemma}[{ \cite[(2.3)]{devalapurkar}}] \label{sq2 from grassmannian}
    The connecting morphism for the exact triangle
$$B^2(\Z/2) \to \tau_{\le 2}(U/O) \to \tau_{\le 1}(U/O) = B\Z \to $$ 
is the composition $B\Z \to B(\Z/2) \xrightarrow{\Sq^2} B^3(\Z/2)$. 
\qed
\end{lemma}



We consider the following diagram:
\begin{equation}\label{equation:right-square}
\begin{tikzcd}[cramped]
	{U(1)} && {B\sqrt{SU}} & BU && {BU(1)=B^2\mathbb{Z}} \\
	& {B(\mathbb{Z}/2)} \\
	{B\mathbb{Z}} && {B^3(\mathbb{Z}/2)} & {\tau_{\le 3} B(U/O)} & {\tau_{\le 2} B(U/O) } & {}
	\arrow[from=1-1, to=1-3]
	\arrow[dashed, from=1-1, to=3-1]
	\arrow[from=1-3, to=1-4]
	\arrow["\alpha", dashed, from=1-3, to=3-3]
	\arrow[from=1-4, to=1-6]
	\arrow[from=1-4, to=3-4]
	\arrow["{{{\operatorname{Sq}^2}}}"', from=2-2, to=3-3]
	\arrow[from=3-1, to=2-2]
	\arrow[from=3-1, to=3-3]
	\arrow[from=3-3, to=3-4]
	\arrow[from=3-4, to=3-5]
	\arrow["{{B\operatorname{det}^2}}", "="', from=3-5, to=1-6]
\end{tikzcd}
\end{equation}
The map from $U(1) \to B\sqrt{SU}$ is the composition $U(1) \to BSU \to B\sqrt{SU}$, where the first map is the connecting map of the fiber sequence $SU \to U \xrightarrow{det} U(1)$. The bottom row is a fiber sequence, so the map $\alpha$ is induced by the fact that the composition $B\sqrt{SU} \to \pi_{\leq 2} B(U/O) \simeq B^2\mathbb{Z}$ is null. 

To establish the existence of the dotted arrow $U(1) \to B\mathbb{Z}$, it is enough to prove that the composition $B(SU) \to B(\sqrt{SU}) \to B^3(\mathbb{Z}/2)$ is null (since $U(1) \to B\sqrt{SU}$ factors through $B(SU) \to B (\sqrt{SU})$). To this end, consider the fiber sequence $SU \to U \to U(1)$. Since $U(1) = \tau_{\leq 2}U$, it follows that $SU= \tau_{\geq 3}U$. Hence $B(SU)$ is $3$-connected, so any map into $B^3(\mathbb{Z}/2)$ is null.




\begin{lemma} \label{lem: BsqrSU-factorizes}
The map $\alpha: B\sqrt{SU} \rightarrow B^3 (\Z/2)$ factors as a map of infinite loop spaces 

\[B\sqrt{SU} \xrightarrow{B\det} B(\Z/2) \xrightarrow{\Sq^2} B^3(\Z/2).\]
\end{lemma}

\begin{proof}
This follows by contemplating the diagram:
\begin{equation*}
\begin{tikzcd}
	&&& {B(\mathbb{Z}/2)} \\
	{U(1)} && {U(1)} &&&& {B(\mathbb{Z}/2)} \\
	&&& {B(\mathbb{Z}/2)} \\
	{B(SU)} && {B(\sqrt{SU})} &&&& {B^3(\mathbb{Z}/2)}
	\arrow["{=}", from=1-4, to=2-7]
	\arrow["{=}"{pos=0.7}, from=1-4, to=3-4]
	\arrow["{z \mapsto z^2}", from=2-1, to=2-3]
	\arrow[from=2-1, to=4-1]
	\arrow[from=2-3, to=1-4]
	\arrow[from=2-3, to=2-7]
	\arrow[from=2-3, to=4-3]
	\arrow["{Sq^2}", from=2-7, to=4-7]
	\arrow[from=3-4, to=4-7]
	\arrow[from=4-1, to=4-3]
	\arrow["{B\operatorname{det}}", from=4-3, to=3-4]
	\arrow["\alpha", from=4-3, to=4-7]
\end{tikzcd}
\end{equation*}
The commutativity of the leftmost square comes from the fiber sequences 
\begin{equation*}
    \begin{tikzcd}
	U && {U(1)} && BSU \\
	U && {U(1)} && {B\sqrt{SU}}
	\arrow["{\operatorname{det}}", from=1-1, to=1-3]
	\arrow["{=}"', from=1-1, to=2-1]
	\arrow[from=1-3, to=1-5]
	\arrow["{z \mapsto z^2}", from=1-3, to=2-3]
	\arrow[from=1-5, to=2-5]
	\arrow["{\operatorname{det}^2}"', from=2-1, to=2-3]
	\arrow[from=2-3, to=2-5]
\end{tikzcd}
\end{equation*}
and the rightmost square comes from \eqref{equation:right-square}. 
It remains to explain the commutativity of the bottom triangle. Since $B(SU) \to B(\sqrt{SU}) \to B(\mathbb{Z}/2)$ is a fiber sequence, it is enough to prove that the composition $B(SU) \to B(\sqrt{SU}) \to B^3(\mathbb{Z}/2)$ is null, which was proved above. 
\end{proof}

\begin{proposition}\label{prop: orientation-non-empty}    A choice of null-homotopy of $\Sq^2: B\mathbb{Z}/2 \to B^3\mathbb{Z}/2$ 
gives a section of the forgetful map from grading/orientation data 
to grading data. 
\end{proposition}
\begin{proof}
    A grading is a lift of the natural map 
    $X \to BU$ to some 
    $f: X \to B\sqrt{SU}$.  Giving orientation data is giving a null-homotopy of the map $\alpha \circ f$, which by  \Cref{lem: BsqrSU-factorizes}, can be written as 
    $\Sq^2 \circ B \det \circ f$.
\end{proof}

While $\Sq^2$ is nonzero as a map of infinite loop spaces   (as the Steenrod square is a nontrivial operation on cohomology),  it is null as a map of spaces.  Indeed, under the identifications
\begin{equation}\label{equation:bsteenrod}
\pi_0\left(\Map(B (\Z/2), B^3(\Z/2) \right) = H^3(B(\Z/2); \Z/2) = H^3(\R \P^\infty,\Z/2) = \Z/2\langle w_1^3 \rangle, 
\end{equation}
the (homotopy class of the) map $\Sq^2: B(\Z/2) \rightarrow B^3(\Z/2)$ corresponds to the element $\Sq^2(w_1) \in \Z/2\langle w_1^3\rangle$. 
But $\Sq^2(w_1) = 0$, since Steenrod squares  have the well-known property that $\Sq^n(x) = 0$ if $n > \deg(x)$.  

Now, the homotopy classes of null-homotopies of $\Sq^2$ are given by $[B(\Z/2), B^2(\Z/2)] = \mathrm{H}^2(\R \P^\infty, \Z/2) = \Z/2$.   Consider the inclusion $O \to \sqrt{SU}$; to study secondary orientation data, we will be interested in null-homotopies of the composition $\alpha': BO \to B\sqrt{SU} \xrightarrow{\alpha} B^3(\Z/2\Z)$.  The space of such null-homotopies is a torsor for $\Map(BO, B^2(\Z/2\Z))$, the homotopy classes of which are $H^2(BO, \Z/2\Z) = \Z/2\langle w_1^2, w_2\rangle$.  

One such null-homotopy arises from the null-homotopy of the factorization $O \to \sqrt{SU} \to \sqrt{SU}/O = B^2 O \xrightarrow{B w_1} B^2(\Z/2\Z)$; let us denote this null-homotopy by $\tau$.  Two more such null-homotopies arise by noticing that $\alpha'$ factors through $BO \to B\sqrt{SU} \to B(\mathbb{Z}/2) \xrightarrow{\Sq^2} B^3\mathbb{Z}/2$, and then composing with one of the two null-homotopies of $\Sq^2$. 

\begin{lemma} \label{homotopy classes of null homotopies of sq2}
    The two null-homotopies of $\alpha': BO \to B^3(\Z/2\Z)$ induced by composition with null-homotopies of $\Sq^2$ represent the classes $\tau + w_2$ and $\tau + w_2 + w_1^2$.
\end{lemma}

\begin{proof}
Denote by $\bar{\tau}$ the null-homotopy $BU \rightarrow B^3(\Z/2\Z)$ by pre-composing $\tau$ with $BU \rightarrow BO$. As the pullback map $H^2(BO,\Z/2) \rightarrow H^2(BU,\Z/2)$ is given by $w_2 \mapsto c_1$ and $w_1^2 \mapsto 0$, it is sufficient to show that the two null-homotopies both go to $\bar{\tau} + c_1$ after further composing to $BU \rightarrow BO \rightarrow B^3(\Z/2\Z)$. Thus, we consider the composition $U \rightarrow O \to \sqrt{SU} \to \sqrt{SU}/O$. The commutative diagram right above \Cref{standard structures} provides the following:
\[
    \begin{tikzcd}
	BU && BSp && B(Sp/U) && B^2 U\\
    \\
	BO && B\sqrt{SU} && B(\sqrt{SU}/O) && B^3(\Z/2\Z)
	\arrow[from=1-1, to=1-3]
	\arrow[from=1-1, to=3-1]
	\arrow[from=1-3, to=1-5]
	\arrow[from=1-5, to=1-7]
    \arrow[from=1-3, to=3-3]
	\arrow[from=1-5, to=3-5]
	\arrow[dashed, from=1-7, to=3-7]    
	\arrow[from=3-1, to=3-3]
	\arrow[from=3-3, to=3-5]
	\arrow["B^2 w_1", from=3-5, to=3-7]	\arrow[bend right=20,"\alpha", from=3-3, to=3-7]    
\end{tikzcd}
\]
We note that the dashed map exists, since any map of the form $B Sp \rightarrow B^3(\Z/2\Z)$ factorizes to $B Sp \rightarrow \tau_{\leq 3} BSp \rightarrow B^3(\Z/2\Z)$ which is canonically homotopic as $\tau_{\leq 3} BSp = 0$. In particular, the two homotopies induced from null-homotopy of $\alpha$ becomes the same after composing with $BU \rightarrow BO$ and is given by the fiber sequence $BSp \rightarrow B(Sp/U) \rightarrow B^2 U$. On the other hand, the left three horizontal maps form a map between fiber sequences. Thus, $\bar{\tau}$ comes from post-composing the fiber sequence $BU \rightarrow BSp \rightarrow B(Sp/U)$. By the proof of \Cref{spin polarization}, we see that their difference is given by $BU \xrightarrow{c_2} B^2(\Z) \rightarrow B^2(\Z/2\Z)$, which is exactly what we want.
\end{proof}

We write $\nu_+$ for the null-homotopy with class $\tau + w_2$ and $\nu_-$ for the one with class $\tau + w_2 + w_1^2$.  

\begin{lemma}
    If $\nu_{\pm}$ is used to define orientation data on a symplectic manifold $X$, then secondary orientation data on a Lagrangian $L$ is a $Pin_{\pm}$ structure on $L$. 
\end{lemma}
\begin{proof}
    Follows from Lemma \ref{homotopy classes of null homotopies of sq2} by arguing as in \Cref{lem: spin}. 
\end{proof}

\begin{lemma} \label{lem: quarternion-g/o-choice}
    Fix a stable quaternionic bundle $X \to BSp$. Then, any grading/orientation datum obtained from applying \Cref{prop: orientation-non-empty} to the canonical grading of \Cref{def: quarternionic-grading/orientation} is canonically identified with the canonical grading/orientation datum of \Cref{def: quarternionic-grading/orientation}.
\end{lemma}
\begin{proof}
    Any map $BSp \to \tau_{\le 3} B(U/O)$ canonically factors through $\tau_{\le 3} BSp = 0$; in particular, the space of such maps is contractible.  We defined the canonical grading/orientation data by taking the corresponding null-homotopy of the composition $X \to BSp \to BU \to \tau_{\le 3} B(U/O)$.  

    Meanwhile the grading/orientation data from \Cref{prop: orientation-non-empty} are induced by choices of null-homotopy of the map $\alpha: B\sqrt{SU} \rightarrow B^3(\Z/2)$. Any such null-homotopy induces a null-homotopy out of $B Sp$ by the pre-composition
    \[B Sp \rightarrow B\sqrt{SU} \rightarrow B^3(\Z/2)\] as explained in \Cref{fiber versus canonical grading}. 
\end{proof}

\section{$t$-structures on Fukaya categories} \label{app: Fukaya}

Here we translate our main results across the sheaf/Fukaya correspondence of \cite{GPS3}, in order to construct $t$-structures on Fukaya categories of certain complex exact symplectic manifolds with contracting weight $1$ $\mathbb{C}^*$ action, such as conic symplectic resolutions and moduli of Higgs bundles. 

Recall that a
\emph{Liouville manifold} is a (real) exact symplectic manifold $(W, \lambda)$ which is modeled at infinity on the symplectization of a contact manifold. The negative flow of the Liouville vector field $Z$ (defined by $\lambda = d\lambda(Z, \cdot)$) retracts $W$ onto a compact subset $\mathfrak{c}_W$ called the \emph{core}. Fix if desired some larger closed conic subset $\Lambda \supset \c_W$.  We say an exact Lagrangian $L \subset W$ is {\em admissible} if it is closed and, outside a compact set, 
it is conic and disjoint from $\Lambda$.  For example, when $Z$ is Smale and gradientlike for a Morse function, 
the ascending trajectories from maximal index critical points (``cocores'') are admissible.   
Any admissible Lagrangian disk which meets $\Lambda$ transversely at a single smooth point is termed
a generalized cocore; we say a set $\{\Delta_\alpha\}$ of generalized cocores is complete if it meets
every connected component of the smooth locus of $\Lambda$.  If $Z$ is gradient-like for a Morse-Bott function, then
$\Lambda$ is known to admit a complete set of generalized cocores.

Fix grading and orientation data.  We recall that one source of
such data is a polarization of the stable 
symplectic normal (or equivalently tangent) bundle 
as explained in \Cref{g-od} below; see also \cite[Sec. 5.3]{GPS3}.
Then one can define a partially wrapped Fukaya category 
$Fuk(W, \partial_\infty \Lambda)$ \cite{GPS2}. 
Objects 
are provided admissible Lagrangians equipped structures corresponding to the grading and orientation data.  
The object associated to a generalized cocore is unique up to grading shift. 
The completion of $Fuk(W, \partial_\infty \Lambda)$ with respect to exact triangles 
is generated by any any complete collection 
of generalized cocores \cite{CDGG, GPS2}.  We further complete with respect to idempotents, and still
denote the resulting category $Fuk(W, \partial_\infty \Lambda)$.

\begin{definition} \label{fukaya t} 
Fix a collection $\{\Delta_\alpha\}$ of generalized cocores equipped with grading data.  We define: 
$$Fuk(W, \partial \Lambda)^{\ge 0} = \{L \, | \, \Hom(L, \Delta_\alpha) \mbox{ is concentrated in degrees $\ge 0$}\}$$
$$Fuk(W, \partial \Lambda)^{\le 0} = \{L \, | \, \Hom(L, \Delta_\alpha) \mbox{ is concentrated in degrees $\le 0$}\}$$
\end{definition}

\noindent It is natural to ask when \Cref{fukaya t} determines a $t$-structure. 

\vspace{2mm} 

Suppose now that $W$ is a complex manifold and that $d\lambda = \operatorname{Re}\omega_\mathbb{C}$, for some complex symplectic structure $\omega_{\mathbb{C}}$ on $W$. 
As we have remarked above, and will explain in detail in \Cref{g-od} below, such a $W$ 
carries canonical grading data, which agrees with the grading induced by any stable complex Lagrangian polarization
of the stable complex symplectic normal bundle (viewed as a real polarization of the real stable symplectic normal bundle), 
and any complex Lagrangian carries a canonical secondary grading.  
More generally, consider a real Lagrangian (or union of Lagrangians) $L \subset W$.  Then 
$TL$ determines a section of $LGr(TW)|_L$.  By a $g$-complex structure on $L$, we mean a simply
connected neighborhood of $LGr_{\C}(TW)|_L \subset LGr(TW)|_L$ containing $TL$. 
An $g$-complex Lagrangian has a canonical secondary grading.  
Evidently any Lagrangian disk transverse to a complex Lagrangian admits a canonical $g$-complex structure.   

\vspace{2mm}

We recall the main result of \cite{GPS3}.  
Assume $W$ is real analytic and Liouville, and $\Lambda \supset \c_W$ is subanalytic, Lagrangian at smooth points,
and admits a complete collection of generalized cocores $\{\Delta_\alpha\}$.  
Fix grading and orientation data coming from a stable polarization.\footnote{The dependence on polarizations here and 
henceforth could be removed by a version of \cite[Sec. 11]{nadler-shende} for Fukaya categories.}
Then $Fuk(W, \partial_\infty \Lambda) \cong \mu sh_\Lambda(\Lambda)^{c, op}$ 
carrying  $\Delta_\alpha$ to co-representatives of microstalk functors (here $(-)^c$ means that we take compact objects). 
Chasing definitions reveals that under \cite{GPS3}, the normalized microstalks of
\Cref{corollary:microstalk-intro-test} are carried to the canonically graded $\Delta_\alpha$ (up to some universal shift).  
Therefore: 

\begin{corollary} \label{corollary:t-structure-fukaya}
Suppose $(W, \lambda)$ is a complex exact symplectic manifold with weight $1$ $\mathbb{C}^*$-action and 
$\Lambda \subset W$ a conic complex (singular) Lagrangian.  Assume 
$(W, \re(\lambda))$ is Liouville and $\Lambda$ admits a complete collection
of generalized cocores $\{\Delta_\alpha\}$.  

Fix grading and orientation data induced by a stable complex Lagrangian polarization of 
$(W, d\lambda)$.  Then the equivalence of \cite{GPS3} carries the $t$-structure 
of \Cref{theorem:main symplectic intro} to a shift of Definition \ref{fukaya t}, which therefore provides
a $t$-structure. 
\end{corollary}
Also by \Cref{theorem:main symplectic intro} translated through \cite{GPS3}, 
if $L, M \subset \c_X$ are 
 spin compact (necessarily conic) smooth Lagrangians,  
their Floer cohomology matches the cohomology of a (shifted) perverse sheaf supported on $L \cap M$.

The hypotheses of \Cref{corollary:t-structure-fukaya} are obviously satisfied for $W$ a cotangent bundle of a complex manifold.  
More generally, there are many examples of holomorphic symplectic manifolds with a weight $1$ $\C^*$ action
scaling the symplectic form -- (coloop-free) quiver varieties, moduli of Higgs bundles, etc. --
which satisfy all the hypotheses (the Liouville flow is gradientlike for 
the moment map for $S^1 \subset \C^*$, which is Morse); cf.\ \cite{vzivanovic2022exact}.


\vspace{2mm}


The Fukaya category has the advantage that non-conic Lagrangians directly define objects, 
to which we thus have more direct geometric access.  In particular, we can now construct objects in the heart. 
The point is that for index reasons, if $L \cup M$ is $g$-complex, then the Floer homology 
between $L$ and $M$ must be concentrated in degree zero.  
Similar considerations, for cotangent bundles, appear in 
\cite{jin-perverse}. 
Thus: 

\begin{corollary} \label{exact objects}
Retain the hypotheses of Corollary \ref{corollary:t-structure-fukaya}. 
Let  $L \subset W$ be an exact Lagrangian.  Assume $L$ is compact, or more generally, that 
$\partial_\infty L$ wraps into $\partial_\infty \Lambda$ without passing through any 
$\partial_\infty \Delta_\alpha$.
Suppose  $L \cup \Delta_\alpha$ is $g$-complex 
for each $\alpha$.  
Then $L \in Fuk(W, \partial_\infty \Lambda)^\heartsuit = \mu sh_\Lambda(\Lambda)^\heartsuit$.
\end{corollary}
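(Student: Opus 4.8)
The plan is to show that $L$, viewed as an object of $Fuk(W,\partial_\infty\Lambda)$, lies in the heart by verifying the microstalk condition through the equivalence of \cite{GPS3}: by Definition \ref{fukaya t} (and its identification with the microstalk $t$-structure via Corollary \ref{corollary:t-structure-fukaya}), it suffices to prove that $\Hom_{Fuk}(L,\Delta_\alpha)$ is concentrated in degree zero for every generalized cocore $\Delta_\alpha$. The hypothesis that $\partial_\infty L$ wraps into $\partial_\infty\Lambda$ without passing through any $\partial_\infty\Delta_\alpha$ guarantees that this wrapped Floer complex is computed by a transverse, compactly supported intersection $L\pitchfork\Delta_\alpha$, so $\Hom_{Fuk}(L,\Delta_\alpha)$ is the Floer cohomology of a genuine Lagrangian intersection with a finite-dimensional Floer complex; there is no contribution at infinity. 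Thus the entire content is a local index computation at the points of $L\cap\Delta_\alpha$.

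First I would fix a point $p\in L\cap\Delta_\alpha$ and work in a Darboux chart near $p$, where $\Lambda$ is (near $p$) the smooth complex Lagrangian through which $\Delta_\alpha$ is transverse. Since $L\cup\Delta_\alpha$ is $g$-complex, by definition $TL$ and $T\Delta_\alpha$ both lie in a prescribed simply connected neighborhood of $LGr_\C(TW)$ inside $LGr(TW)$, along with a canonical relative grading. The key point, exactly as noted before the statement, is that for two transverse complex Lagrangian subspaces of a complex symplectic vector space the Maslov/Robbin--Salamon index contributing to the degree of the intersection point is forced: the path of Lagrangians joining them inside $LGr_\C$ is contractible, and the relative grading data of complex Lagrangians makes every transverse intersection contribute in a single fixed degree — which, with the normalization of microstalks carried through \cite{GPS3} to the canonically graded $\Delta_\alpha$, is degree $0$. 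The same argument applies verbatim to a $g$-complex $L$ because the $g$-complex condition is precisely the statement that $TL$ lies in the simply connected neighborhood where the relative grading is still well-defined and the index is locally constant, hence equal to its value on genuine complex Lagrangians.

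Concretely, I would argue: (i) reduce $\Hom_{Fuk}(L,\Delta_\alpha)$ to $\bigoplus_{p\in L\cap\Delta_\alpha}$ (shift by the local degree), using the transversality and non-escaping-to-infinity provided by the hypotheses; (ii) compute the local degree at each $p$ by a linear-algebra index computation in the Lagrangian Grassmannian, using that $g$-complexity confines $TL$ and $T\Delta_\alpha$ to a simply connected region where the Maslov index is a homotopy invariant and therefore equals its value for honest complex Lagrangians; (iii) identify that common value with $0$ using the compatibility, spelled out before Corollary \ref{corollary:t-structure-fukaya}, between the canonical grading on complex Lagrangians and the normalized microstalks of Theorem \ref{intro objects}. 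Combining (i)--(iii), $\Hom(L,\Delta_\alpha)$ is concentrated in degree $0$ for all $\alpha$, so $L\in Fuk(W,\partial_\infty\Lambda)^{\ge 0}\cap Fuk(W,\partial_\infty\Lambda)^{\le 0}=Fuk(W,\partial_\infty\Lambda)^\heartsuit$, which by Corollary \ref{corollary:t-structure-fukaya} equals $\mu sh_\Lambda(\Lambda)^\heartsuit$.

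The main obstacle I anticipate is step (i): being careful that the wrapping of $\partial_\infty L$ into $\partial_\infty\Lambda$ really can be arranged to avoid all $\partial_\infty\Delta_\alpha$ and to produce a compactly supported transverse intersection whose Floer complex has no higher differentials beyond what the degrees already force — i.e. checking that the morphism complex genuinely degenerates to its intersection-point generators in the relevant degree. The index computation in steps (ii)--(iii) is, by contrast, a routine (if slightly fiddly) unwinding of the definitions of grading data and the conventions of \cite{GPS3}; the geometric input is entirely the $g$-complex hypothesis, which is exactly tailored to make the Maslov index vanish.
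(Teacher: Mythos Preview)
Your proposal is correct and follows essentially the same approach as the paper, which only sketches the argument in the introduction: the wrapping hypothesis ensures $\Hom_{Fuk}(L,\Delta_\alpha)$ is computed by ordinary Floer cohomology, and the $g$-complex hypothesis forces every generator to sit in degree zero for index reasons, so $L$ lies in the heart by Definition~\ref{fukaya t} and Corollary~\ref{corollary:t-structure-fukaya}. Your worry at the end about ``degeneration'' of the Floer complex is unnecessary: once you know all generators lie in a single degree, the Floer differential (which shifts degree by one) is forced to vanish, so the complex equals its cohomology automatically.
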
 

The wrapping hypotheses appears to ensure that Hom in the wrapped Fukaya category is in fact computed
without any wrapping, i.e. just by the Floer homology.  One can imagine 
applying the 
corollary by taking an exact holomorphic Lagrangian is asymptotic to $\partial_\infty \Lambda$, 
and cutting off and straightening; such a process would plausibly produce a Lagrangian
satisfying the wrapping hypothesis.  We contemplate this process because such asymptotically conical exact 
holomorphic Lagrangians appear frequently in examples, but do not literally provide objects of 
$Fuk(X, \partial_\infty \Lambda)$.  

\vspace{2mm}

In a different direction, it was observed in \cite{shende-fibers} that the equivalence of microsheaf and Fukaya categories  \cite{GPS3}
remains true after enlarging the Fukaya category to contain
unobstructed compact nonexact Lagrangians, of course taking coefficients in the Novikov field.   As there, 
this observation is profitably combined with the fact  \cite{solomon-verbitsky} that holomorphic Lagrangians are unobstructed in 
hyperk\"ahler manifolds.  We conclude: 

\begin{corollary} \label{nonexact objects}
Retain the hypotheses of Corollary \ref{corollary:t-structure-fukaya}. 
Assume $W$ is hyperk\"ahler and let  $L \subset W$ be a compact holomorphic Lagrangian.  
Suppose 
 $L \cup \Delta_\alpha$ is $g$-complex for each $\alpha$.  
Then $L \in Fuk(W, \partial_\infty \Lambda)^\heartsuit = \mu sh_\Lambda(\Lambda)^\heartsuit$,
all categories taken with coefficients over the Novikov field. 
\end{corollary}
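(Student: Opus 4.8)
The plan is to mirror the proof of \Cref{exact objects}, substituting unobstructedness for exactness. There are two geometric inputs. First, by \cite{solomon-verbitsky}, a compact holomorphic Lagrangian $L$ in a hyperk\"ahler manifold bounds no nonconstant holomorphic disks, hence is unobstructed as a Fukaya-categorical object once we pass to the Novikov field. Second, by \cite{shende-fibers}, the equivalence $Fuk(W,\partial_\infty\Lambda)\cong \mu sh_\Lambda(\Lambda)^{c,op}$ of \cite{GPS3} persists after enlarging the left-hand category to contain such unobstructed compact nonexact Lagrangians. Combining these, $L$ determines an object of $Fuk(W,\partial_\infty\Lambda)=\mu sh_\Lambda(\Lambda)^{c}$, and the generalized cocores $\Delta_\alpha$ continue to corepresent the microstalk functors at the smooth points of $\Lambda$ they meet.

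Next I would reduce membership in the heart to a degree estimate. By \Cref{corollary:t-structure-fukaya} the perverse $t$-structure of \Cref{intro perverse t} transports through \cite{GPS3} to a shift of \Cref{fukaya t}; equivalently, via the identification of the microstalk of an object $F$ with $\Hom_{Fuk}(F,\Delta_\alpha)$, an object lies in the heart precisely when each such $\Hom$ is concentrated in the single degree normalized to $0$. Since $L$ is compact, the wrapping hypothesis of \Cref{exact objects} holds automatically: $\Hom_{Fuk}(L,\Delta_\alpha)$ is computed with no wrapping in the region where $\Delta_\alpha$ meets $L$, so it equals the small Floer complex $CF^{*}(L,\Delta_\alpha)$. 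Finally, since $L\cup\Delta_\alpha$ is $g$-complex, the relative grading is the complex one, and the elementary index bound recalled before \Cref{exact objects} --- that intersection points of $g$-complex Lagrangians all carry the same Maslov degree --- shows $CF^{*}(L,\Delta_\alpha)$ is supported in that single degree. Hence $L\in Fuk(W,\partial_\infty\Lambda)^{\heartsuit}=\mu sh_\Lambda(\Lambda)^{\heartsuit}$.

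The main obstacle is bookkeeping rather than any substantial new argument: one must check that the enlargement of \cite{shende-fibers} is compatible with the microstalk normalization of \Cref{intro objects} --- so that a nonexact compact Lagrangian, viewed through \cite{GPS3}, still has microstalks computed by the $\Hom$'s into the $\Delta_\alpha$ with the same degree conventions used to set up \Cref{fukaya t} --- and that these conventions place the $g$-complex intersection locus in degree $0$ on the nose. Working over the Novikov field raises no difficulty for the $t$-structure itself, since it is a field equipped with a $t$-structure and the constructions in the body of the paper apply to any such coefficient category. Modulo these compatibility checks, the content of the corollary is exhausted by \cite{solomon-verbitsky}, \cite{shende-fibers}, and the $g$-complex index bound.
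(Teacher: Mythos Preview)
Your proposal is correct and follows the same approach as the paper: the paper's own argument is just the paragraph preceding the corollary, which cites \cite{solomon-verbitsky} for unobstructedness of holomorphic Lagrangians in hyperk\"ahler manifolds and \cite{shende-fibers} for the extension of the \cite{GPS3} equivalence to unobstructed compact nonexact Lagrangians over the Novikov field, then implicitly invokes the same $g$-complex index argument used for \Cref{exact objects}. Your write-up is in fact more detailed than the paper's sketch, and your flagged compatibility checks are reasonable caveats that the paper does not spell out.
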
 

 \Cref{exact objects} and \Cref{nonexact objects} can be stated more generally as 
defining fully faithful functors from appropriate abelian category of local systems on $L$ to the heart of the $t$-structure.

\newpage

\bibliographystyle{plain}
\bibliography{refs}

\end{document}